 \newtheorem{theorem}{Theorem}[section]
 \newtheorem{corollary}{Corollary}[section]
 \newtheorem{lemma}{Lemma}[section]
 \newtheorem{proposition}{Proposition}[section]
 \newtheorem{remark}{Remark}[section]
 \numberwithin{equation}{section}
\def\e{\epsilon}
\def\k{\kappa}
\newcommand{\beq}{\begin{equation}}
\newcommand{\eeq}{\end{equation}}
 \def\non{\nonumber }
\def\bea{\begin{eqnarray}}
\def\eea{\end{eqnarray}}
\def\pa{\partial}
\def\na{\nabla}
\begin{document}
\title{Blow-up for a Three Dimensional Keller--Segel Model with Consumption of Chemoattractant}
\author{Jie Jiang\thanks{Wuhan Institute of Physics and Mathematics, Chinese Academy of Sciences,
Wuhan 430071, HuBei Province, P.R. China,
\textsl{jiang@wipm.ac.cn}.},\ \ \
Hao Wu\thanks{School of Mathematical Sciences and Shanghai Key Laboratory for Contemporary Applied Mathematics, Fudan University, Shanghai, China,
\textsl{haowufd@yahoo.com}.}\ \
and \ Songmu Zheng\thanks{School of Mathematical Sciences, Fudan University, Shanghai, China,
\textsl{songmuzheng@yahoo.com}.}}

\date{\today}
\maketitle

\begin{abstract}
We investigate blow-up properties for the initial-boundary value problem of a Keller--Segel model with consumption of chemoattractant when the spatial dimension is three. Through a kinetic reformulation of the Keller--Segel model, we first derive some higher-order estimates and obtain certain blow-up criteria for the local classical solutions. These blow-up criteria generalize the results in \cite{CKL13, CKL14} from the whole space $\mathbb{R}^3$ to the case of bounded smooth domain $\Omega\subset \mathbb{R}^3$. Lower global blow-up estimate on $\|n\|_{L^\infty(\Omega)}$ is also obtained based on our higher-order estimates.
Moreover, we prove local non-degeneracy for blow-up points.

{\bf Keywords}: Chemotaxis, classical solutions, blow-up criterion, blow-up rate.\\
\end{abstract}

\section{Introduction}
In this paper, we investigate the following chemotaxis system of Keller--Segel type with consumption of chemoattractant
\beq
 \begin{cases}\label{chemo1}
n_t= \Delta n-\nabla\cdot(n\chi(c)\nabla c),\\
c_t= \Delta c- nf(c),\end{cases}
\eeq
subject to the homogeneous Neumann conditions
\begin{equation}\label{chemo0}
	\frac{\partial n}{\partial \nu}\Big{|}_{\partial\Omega}
=\frac{\partial c}{\partial \nu}\Big{|}_{\partial\Omega}=0,
\end{equation}
and initial conditions
\begin{equation}\label{ini}
	n(x,0)=n_0(x),\quad c(x,0)=c_0(x).
\end{equation}
Here, $\Omega\subset\mathbb{R}^3$ is a bounded domain with smooth boundary $\partial\Omega$ and $\nu$ is the outward normal vector on $\partial \Omega$.
The unknowns $n=n(x,t)$ and $c=c(x,t)$ denote the bacteria density and chemoattractant concentration, respectively. $\chi(c)>0$ is a parameter that describes the chemotactic sensitivity and $f(c)$ is the consumption rate of the chemoattractant by the bacteria.
For the sake of simplicity, throughout this paper we assume that
\begin{align}
 \chi(c)=\chi>0,\quad f(c)=c,\label{assmp}
\end{align}
where $\chi$ is a positive constant.

The chemotaxis system \eqref{chemo1} can be regarded as a fluid-free version of the following  Keller--Segel--Navier--Stokes model, which was proposed by Tuval et al \cite{Tuval} in order to study the motion of oxygen-driven swimming bacteria in viscous incompressible fluids:
\beq\begin{cases}\label{chemofluid}
n_t+\mathbf{u}\cdot\nabla n=\Delta n-\nabla\cdot(n\chi(c)\nabla c),\\
c_t+\mathbf{u}\cdot\nabla c=\Delta c-nf(c),\\
\mathbf{u}_t+\kappa(\mathbf{u}\cdot\nabla)\mathbf{u}+\nabla p=\Delta \mathbf{u}-n\nabla\phi,\\
\nabla\cdot u=0.\end{cases}
\eeq
In \eqref{chemofluid}, $n, c$ represent the densities of bacteria and oxygen (chemoattractant), respectively and $\mathbf{u}$ stands for the velocity field of the macroscopic incompressible fluid subject to the Navier--Stokes equations, the scalar function $p$ stands for the pressure and $\phi$ is a given potential function that accounts for the effects of external forces such as gravity.
The chemotaxis-fluid system \eqref{chemofluid} has been extensively studied in the literature. We refer to \cite{LL2011,DLM2010,CKL13,CKL14,TZJMAA} for studies on the Cauchy problem in the whole space $\mathbb{R}^d$, and to \cite{FLM2010,LL2011,TW2013} for the case that the linear diffusion term $\Delta n$ is replaced by a nonlinear one of porous medium type $\Delta n^m$.
 Concerning the initial boundary value problem of system \eqref{chemofluid} in a bounded regular domain $\Omega\subset \mathbb{R}^d$, Lorz \cite{Lorz2010} proved the existence of a local weak solution by Schauder's fixed point theory when $d=3$. If the domain $\Omega$ is further assumed to be {\em convex}, using some delicate entropy-energy estimates, Winkler \cite{WinklerCPDE} established the existence of a unique global classical solution with large initial data for $\kappa\in\mathbb{R}$ when $d=2$, and the existence of a global weak solution for  $\kappa=0$ when $d=3$. Later in \cite{WinkARMA}, the same author proved that the global classical solution obtained in \cite{WinklerCPDE} in two dimensional case will converge to a constant state $(n_\infty, 0,\mathbf{0})$ as time goes to infinity. More recently, by exploiting an elementary lemma due to Mizoguchi \& Souplet \cite{MizoSoup}, the authors of the present paper derived a new type of entropy-energy estimate that holds on general bounded regular domains in $\mathbb{R}^3$ (see \cite{JWZ}) and thus generalized the previous work of Winkler \cite{WinkARMA,WinklerCPDE} in which the domain $\Omega$ was essentially assumed to be convex. For further results, we refer to \cite{WinklerCVPDE, ZL15a,TWDCDS2012,Winkler16} and the references therein.

In order to have a better understanding on the dynamics of chemotaxis in the coupled system \eqref{chemofluid}, it will be helpful to investigate its fluid-free version. Several works in this direction have been carried out in recent years.
For instance, taking $\mathbf{u}=\mathbf{0}$, $f(c)=c$ and $\chi(s)=\chi$ with $\chi>0$ being a constant, \eqref{chemofluid} is reduced to the chemotaxis model \eqref{chemo1} of Keller--Segel type. When $\Omega\subset \mathbb{R}^2$ is a bounded convex domain, global existence of classical solutions to the initial boundary value problem \eqref{chemo1}--\eqref{ini} with large non-negative initial data was established in \cite{BBTW}, while when $\Omega\subset\mathbb{R}^3$ is a bounded convex domain, Tao \& Winkler \cite{TW} obtained the existence of a generalized weak solution that enjoys eventual smoothness, i.e., there exists a $T>0$ such that $(n,c)$ is bounded and smooth in $\Omega\times (T,\infty)$.  On the other hand, Tao \cite{Tao} proved that on any bounded regular domain $\Omega\subset\mathbb{R}^d$ with $d\geq2$, problem \eqref{chemo1}--\eqref{ini} admits a unique global classical solution provided that the constant chemotactic sensitivity satisfies $0< \chi\leq \frac{1}{6(d+1)\|c_0\|_{L^\infty(\Omega)}}$. In other words, $\|c_0\|_{L^\infty(\Omega)}$ is required to be small for fixed $\chi$ in order to ensure global well-posedness. Here we note that $L^\infty(0,T;L^\infty(\Omega))$ is a critical scaling-invariant space for the unknown $c(x,t)$ as a solution to the system \eqref{chemo1}. Exponential convergence of global classical solutions to constant equilibrium $(\frac{1}{|\Omega|}\int_\Omega n_0dx,0)$ was recently examined in \cite{ZL15}. There are also some contributions devoted to the study on existence of classical solutions to \eqref{chemo1}--\eqref{ini} with nonlinear diffusion $\Delta n^m$ instead of the linear diffusion $\Delta n$ (see \cite{WMLZ15} and the references cited therein).

In this paper, we consider system \eqref{chemo1} in an arbitrary bounded regular domain $\Omega\subset\mathbb{R}^d$ without imposing any assumption on it convexity. When $d=2$, our previous work \cite{JWZ} yields that problem \eqref{chemo1}--\eqref{ini} admits a unique global classical solution without any restriction on the sizes of $\chi$ and $\|c_0\|_{L^\infty(\Omega)}$, which improves the result of \cite{Tao} in two dimensional case.
However, when $d=3$, to the best of our knowledge, it is still an open problem whether the classical solution to problem \eqref{chemo1}--\eqref{ini} exists globally or blows up in finite time with arbitrary large regular initial data.
As a first step towards this problem, it would be worth establishing some blow-up criteria for local classical solutions to problem \eqref{chemo1}--\eqref{ini} that are easy to apply (see \cite{CKL13,CKL14} for the case of whole space $\mathbb{R}^d$, $d=2,3$). This is the first aim of the present paper.

Now we are in a position to state our main results.

\begin{theorem}\label{mr1} Let $\Omega\subset\mathbb{R}^3$ be a bounded domain with smooth boundary $\partial\Omega$.
Assume that $(n_0,c_0)\in C^0(\overline{\Omega})\times W^{1,q}(\Omega)$ for some $q>3$ and $n_0, c_0$ are positive.
If $T_{max}\in(0,+\infty)$ is the maximal existing time of the local classical solution $(n,c)$ to problem \eqref{chemo1}--\eqref{assmp}, then we have
\begin{equation}
	\label{cri}
     \|n(t)\|_{L^r(0,T_{max};\, L^s(\Omega))}=+\infty,\quad \mathrm{with}\;\;\frac{3}{s}+\frac{2}{r}\leq 2,\quad \frac{3}{2}<s\leq +\infty.
\end{equation}
\end{theorem}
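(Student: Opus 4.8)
The plan is to argue by contradiction. Suppose that for some admissible pair $(r,s)$ with $\frac{3}{s}+\frac{2}{r}\le 2$ and $\frac{3}{2}<s\le\infty$ one has $\|n\|_{L^r(0,T_{max};\,L^s(\Omega))}<\infty$, and show that $(n,c)$ then extends to a classical solution on $[0,T_{max}+\delta)$, contradicting the maximality of $T_{max}$. Since the higher-order estimates established earlier in the paper provide a continuation criterion — extendability follows once $n$ is bounded in a sufficiently strong norm, e.g. $n\in L^\infty(0,T_{max};L^p(\Omega))$ for large $p$, or $\|n\|_{L^\infty(\Omega\times(0,T_{max}))}<\infty$ — the whole task reduces to propagating such a bound from the assumed Serrin-type control of $n$.

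First I would record the elementary a priori bounds that hold independently of the Serrin hypothesis: nonnegativity $n,c\ge0$ by the maximum principle; the uniform bound $0\le c(t)\le\|c_0\|_{L^\infty(\Omega)}=:M$, since the consumption term $-nc\le0$ makes $c$ a subsolution of the heat equation; and conservation of mass $\|n(t)\|_{L^1(\Omega)}=\|n_0\|_{L^1(\Omega)}$ from integrating the first equation under the Neumann condition. I would then convert the control of $n$ into control of $\nabla c$: writing $c(t)=e^{t\Delta}c_0-\int_0^t e^{(t-\tau)\Delta}(nc)(\tau)\,d\tau$ and using the smoothing estimates $\|\nabla e^{t\Delta}g\|_{L^q(\Omega)}\le C\,t^{-\frac12-\frac32(\frac1p-\frac1q)}\|g\|_{L^p(\Omega)}$ for the Neumann heat semigroup, together with $\|nc\|_{L^s(\Omega)}\le M\|n\|_{L^s(\Omega)}$, the hypothesis yields a space--time integrability bound on $\nabla c$. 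Equivalently one may invoke maximal $L^r_tL^s_x$ regularity for $c_t-\Delta c=-nc$ to place $D^2c$ and $c_t$ in $L^r(0,T_{max};L^s(\Omega))$ and read off $\nabla c$ by parabolic Sobolev embedding; the target is that $|\nabla c|^2$ lie in a space dual to the one in which the $n^p$-factor below naturally sits.

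The core of the argument is the $L^p$ energy estimate for $n$. Testing the first equation with $n^{p-1}$ and integrating by parts (the Neumann condition killing the boundary terms) gives
\beq
\frac{1}{p}\frac{d}{dt}\|n\|_{L^p(\Omega)}^p+\frac{4(p-1)}{p^2}\|\nabla n^{p/2}\|_{L^2(\Omega)}^2
=\frac{\chi(p-1)}{p}\int_\Omega \nabla(n^p)\cdot\nabla c\,dx.
\eeq
On the right I would write $\nabla(n^p)=2\,n^{p/2}\nabla n^{p/2}$, apply Young's inequality to absorb a fraction of $\|\nabla n^{p/2}\|_{L^2(\Omega)}^2$ into the dissipation, and be left with a multiple of $\int_\Omega n^{p}|\nabla c|^2\,dx$. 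This integral is then handled by H\"older's inequality, the three-dimensional Gagliardo--Nirenberg--Sobolev inequality applied to $w=n^{p/2}$, and the integrability of $\nabla c$ from the previous step; the condition $\frac{3}{s}+\frac{2}{r}\le2$ is precisely what renders the resulting time-weight integrable, so that a Gronwall argument closes and yields $n\in L^\infty(0,T_{max};L^p(\Omega))$.

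Iterating this, or running a Moser--Alikakos bootstrap as $p\to\infty$, upgrades the conclusion to an $L^\infty$ bound on $n$, which feeds the higher-order estimates and contradicts the maximality of $T_{max}$. I expect the main obstacle to be exactly the treatment of $\int_\Omega n^p|\nabla c|^2\,dx$: one must balance the Gagliardo--Nirenberg interpolation exponents against the available integrability of $\nabla c$ so that the diffusion term genuinely dominates and the time-integrable factor is controlled by the finite Serrin norm, uniformly enough to drive the bootstrap. The borderline situations — equality $\frac{3}{s}+\frac{2}{r}=2$, the endpoint $s=\infty$, and $s$ near $\frac{3}{2}$ — are the delicate ones, since the scaling is then critical and no slack remains; these are where the sharp form of the semigroup/maximal-regularity estimates, and the favorable sign supplied by the consumption structure (notably the nonpositive term $-\int_\Omega n^{p+1}c\,dx$ arising when one instead rewrites the right-hand side via $\Delta c=c_t+nc$), must be exploited with care.
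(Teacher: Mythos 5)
Your route is genuinely different from the paper's. The paper never invokes maximal regularity: it builds, in Lemmas \ref{lm3.8}--\ref{LEM}, a single combined functional $V(t)=\int_\Omega\big[\tfrac n2|\nabla\log n|^2+\tfrac{\kappa_1}{2\chi}n^2c+\tfrac{\kappa_1}{\chi^2}n^2+(\tfrac{\kappa_1}{2}+\kappa_2)n|\nabla c|^2+\kappa_2|\Delta c|^2+\kappa_3|\nabla c|^4\big]\,dx$ whose only dangerous remainder is $\int_\Omega n|\nabla c|^4\,dx$, and closes a Gronwall inequality for $V$ directly under the negation \eqref{cri0}; the price is the boundary terms (handled by Lemma \ref{MS} plus the trace theorem) generated by the $n|\nabla\log n|^2$ and $|\nabla c|^4$ components. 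Your plan instead converts the Serrin control of $n$ into integrability of $|\nabla c|^2$ via maximal $L^r_tL^s_x$ regularity for $c_t-\Delta c=-nc$ (using $c\in L^\infty_tL^\infty_x$ and Gagliardo--Nirenberg to pass from $D^2c$ to $|\nabla c|^2$), then runs the standard $L^p$ energy estimate for $n$. For interior exponents $1<r,s<\infty$ this works, and the bookkeeping is exactly right: with $\|n^{p/2}\|_{L^{2s'}}^2\lesssim\|n^{p/2}\|_{H^1}^{3/s}\|n^{p/2}\|_{L^2}^{2-3/s}$ (legitimate since $s>\tfrac32$ gives $2s'<6$), the Young conjugate exponent is $\tfrac{2s}{2s-3}\le r$, so the weight is time-integrable and Gronwall closes, even at equality $\tfrac3s+\tfrac2r=2$; moreover your $n^{p-1}$ test produces no boundary terms under the Neumann conditions, so you avoid the Mizoguchi--Souplet lemma entirely. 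The cases $r=\infty$, and $s=\infty$ with $r>1$, reduce to interior exponents because $\Omega$ is bounded and $T_{max}<\infty$ (e.g.\ replace $s=\infty$ by $\tilde s=\tfrac{3r}{2(r-1)}$). Your worry about ``$s$ near $\tfrac32$'' is unfounded for the same reason.

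The genuine gap is the endpoint $(r,s)=(1,\infty)$ --- note $r=1$ forces $s=\infty$ in \eqref{cri} --- which is admissible in the theorem and is precisely the case the paper uses to deduce Corollary \ref{re1}. There both of your tools fail: maximal $L^r(L^s)$ regularity requires $1<r,s<\infty$, and the semigroup bound $\|\nabla e^{t\Delta}g\|_{L^\infty}\lesssim t^{-1/2}\|g\|_{L^\infty}$ applied to $nc\in L^1(0,T_{max};L^\infty(\Omega))$ only gives $\nabla c$ in weak-$L^2$ in time (convolution of an $L^1$ function with $t^{-1/2}$), a logarithm short of the $|\nabla c|^2\in L^1_tL^\infty_x$ your Gronwall weight needs; interpolating the hypothesis with the conserved mass $n\in L^\infty_tL^1_x$ does not help, since every interpolated pair $(q',q)$ satisfies $\tfrac3q+\tfrac2{q'}=2+\tfrac1q>2$ and is strictly supercritical. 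The structural reason the gap cannot be patched within your framework is that your coupling term $\int_\Omega n^p|\nabla c|^2\,dx$ has degree $p\ge2$ in $n$: factoring out $\|n\|_{L^\infty}\in L^1_t$ leaves $\int_\Omega n^{p-1}|\nabla c|^2\,dx$, which is not controlled by $\|n\|_{L^p}^p$. The paper's functional is engineered exactly so that its coupling term $\int_\Omega n|\nabla c|^4\,dx$ is \emph{linear} in $n$ while $\int_\Omega|\nabla c|^4\,dx$ is itself a component of $V$; hence at this endpoint one simply estimates $\int_\Omega n|\nabla c|^4\,dx\le\|n\|_{L^\infty(\Omega)}V(t)$ and Gronwall closes with the weight $\|n\|_{L^\infty(\Omega)}\in L^1(0,T_{max})$ (the paper's Case 1). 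To repair your proof at $(1,\infty)$ you would have to adjoin a pure-$c$ quantity such as $\|\nabla c\|_{L^4(\Omega)}^4$ to your Gronwall functional and derive its evolution --- which is essentially the paper's Lemma \ref{lm3.9}, boundary term and all --- at which point you have reconstructed the paper's argument.
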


Theorem \ref{mr1} provides a rough characterization on the blow-up rate of $\|n\|_{L^\infty(\Omega)}$ towards the blow-up time $T_{max}$ if it is finite.
Namely,
\begin{corollary}\label{re1}
Let $T_{max}\in(0,+\infty)$ be the maximal existing time of the local classical solution $(n,c)$ to problem \eqref{chemo1}--\eqref{assmp}.
For any $\delta\in (0,1)$, it holds
\begin{equation}\label{rate0}
	\limsup\limits_{t\nearrow T_{max}^-}\|n(t)\|_{L^\infty(\Omega)}(T_{max}-t)^{1-\delta}=+\infty.
\end{equation}	
\end{corollary}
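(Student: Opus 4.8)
The plan is to argue by contradiction, turning the assumed pointwise-in-time growth bound into a \emph{finite} space--time norm that directly violates the blow-up criterion of Theorem \ref{mr1}. Suppose that \eqref{rate0} fails for some $\delta\in(0,1)$, i.e.
\[
\limsup_{t\nearrow T_{max}^-}\|n(t)\|_{L^\infty(\Omega)}(T_{max}-t)^{1-\delta}<+\infty .
\]
Then, by definition of $\limsup$, there exist a constant $C>0$ and a time $t_0\in(0,T_{max})$ such that
\[
\|n(t)\|_{L^\infty(\Omega)}\leq C\,(T_{max}-t)^{-(1-\delta)},\qquad t\in(t_0,T_{max}).
\]
This is meaningful since $(n,c)$ is a classical solution, so $n(t)\in C^0(\overline{\Omega})\subset L^\infty(\Omega)$ for each $t<T_{max}$.

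Next I would select the admissible exponent pair $(r,s)=(1,+\infty)$ in Theorem \ref{mr1}: indeed $s=+\infty$ lies in the admissible range $\tfrac32<s\leq+\infty$, and $\tfrac{3}{s}+\tfrac{2}{r}=0+2=2\leq 2$, so that \eqref{cri} must hold for this pair if $T_{max}$ is finite. The strategy is then to show that for \emph{this} pair the space--time norm is actually finite. Splitting the time integral at $t_0$,
\[
\|n\|_{L^1(0,T_{max};\,L^\infty(\Omega))}=\int_0^{t_0}\|n(t)\|_{L^\infty(\Omega)}\,dt+\int_{t_0}^{T_{max}}\|n(t)\|_{L^\infty(\Omega)}\,dt,
\]
the first integral is finite because $(n,c)$ is a classical solution on the compact interval $[0,t_0]\subset[0,T_{max})$, while the pointwise bound controls the second by
\[
\int_{t_0}^{T_{max}}\|n(t)\|_{L^\infty(\Omega)}\,dt\leq C\int_{t_0}^{T_{max}}(T_{max}-t)^{-(1-\delta)}\,dt .
\]

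The key observation is that the singular exponent satisfies $1-\delta<1$ precisely because $\delta>0$, so the right-hand integral converges; explicitly, after the change of variable $\tau=T_{max}-t$ it equals $C\int_0^{T_{max}-t_0}\tau^{-(1-\delta)}\,d\tau=\tfrac{C}{\delta}(T_{max}-t_0)^{\delta}<+\infty$. Hence $\|n\|_{L^1(0,T_{max};\,L^\infty(\Omega))}<+\infty$, which contradicts \eqref{cri} for the chosen pair $(r,s)=(1,+\infty)$. This contradiction forces \eqref{rate0} to hold for every $\delta\in(0,1)$, completing the proof. There is no genuine analytic obstacle in this argument; the only points requiring care are to check that the chosen pair is admissible and that the integrability threshold $r(1-\delta)<1$ can be met, both of which are automatic for the natural choice $r=1$, $s=+\infty$ as soon as $\delta>0$.
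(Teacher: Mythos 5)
Your proof is correct and follows essentially the same route as the paper: a contradiction argument in which failure of \eqref{rate0} for some $\delta$ makes $\|n\|_{L^1(0,T_{max};\,L^\infty(\Omega))}$ finite, violating the criterion \eqref{cri} of Theorem \ref{mr1} with the admissible pair $(r,s)=(1,+\infty)$. The only difference is that you write out the details (the splitting of the time integral at $t_0$ and the convergence of the singular integral, using $\delta>0$) that the paper's one-line proof leaves implicit.
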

Corollary \ref{re1} can be easily proved by a contradiction argument.
Indeed, if \eqref{rate0} does not hold, one then deduces that \eqref{cri} is false with $s=+\infty$, $r=1$ and hence $T_{max}$ cannot be the finite blow-up time, which leads to a contradiction of its definition. However, the estimate \eqref{rate0} does not help us to rule out the so-called type I blow-up.

To see this point, we observe that the system \eqref{chemo1} with assumption \eqref{assmp} has the following scaling invariance property (taking $\Omega=\mathbb{R}^d$): if $(n,c)$ is a solution to \eqref{chemo1}, then the pair $(n_{\lambda}, c_{\lambda})$ given by
		\begin{equation}
			n_{\lambda}(x,t)=\lambda^2 n(\lambda x,\lambda^2 t),\quad c_{\lambda}(x,t)= c(\lambda x,\lambda^2 t),\quad \forall\,\lambda>0,
		\end{equation}	
is also a solution. Motivated by this self-similar scaling invariance, a temporal blow-up rate associated with the unknown $n$ for system \eqref{chemo1} is said to be type I (or type II, respectively) if
		$\limsup\limits_{t\nearrow T_{max}^-}\|n(t)\|_{L^\infty(\Omega)}(T_{max}-t)$ is finite (or infinite, respectively).
Hence, type I blow-up solutions are those that blow up with at most a self-similar rate.
	 	
Recall the classical Keller--Segel model (see \cite{KS70})
\beq
 \begin{cases}
n_t= \Delta n-\nabla\cdot(n\nabla c),\\
\Gamma c_t= \Delta c- c+ n.
\end{cases}\nonumber
\eeq
When $\Gamma=0$, i.e., the parabolic-elliptic type, it is proved that any blow-up is type II if $d=2$ (see \cite{ST}) whereas, for $d\geq11$, radial type II blow-up solutions are known to exist (see \cite{MS}). For $3\leq d\leq 9$, a sufficient condition on the initial data ensuring type I blow-up was found in \cite{Giga}. In contrast, the situation for fully parabolic Keller--Segel system (i.e., $\Gamma>0$) is yet far from being well understood.  A lower blow-up estimate  was obtained in \cite{MizoSoup}, which provided information on $L^\infty$-norms of $(n,\nabla c)$ towards the blow-up time. However, type I blow-up cannot be ruled out. More recently, it was pointed out that only type II blow-up is possible for the classical parabolic-parabolic Keller--Segel system with nonlinear diffusion $\Delta n^m$ replacing the linear one $\Delta n$ when $1<m\leq m_c$ and $d\geq3$ where $m_c:=\frac{2(d-1)}{d}$ (see\cite{ILM16}).

Now our second main result asserts that when $d=3$, if the local classical solution to problem \eqref{chemo1}--\eqref{assmp} blows up in finite time, then the following global-in-space lower blow-up estimate can be obtained
\begin{theorem}	\label{mr2}
Suppose that the assumptions of Theorem \ref{mr1} are satisfied and the maximal existing time $T_{max}$ is finite.
Then there exists a positive constant $\alpha$ depending on $\|c_0\|_{L^\infty(\Omega)}$, $\chi$ and $\Omega$, such that
\begin{equation}
	\limsup\limits_{t\nearrow T_{max}^-} (T_{max}-t) \|n(t)\|_{L^\infty(\Omega)}  \geq \alpha.\label{lowblow}
\end{equation}
\end{theorem}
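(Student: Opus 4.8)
The plan is to argue by contradiction and reduce the lower estimate to the blow-up criterion of Theorem~\ref{mr1}. Suppose \eqref{lowblow} fails; then for every small $\alpha>0$ there is $t_1<T_{max}$ with
\[
\|n(t)\|_{L^\infty(\Omega)}\le\frac{\alpha}{T_{max}-t},\qquad t\in[t_1,T_{max}).
\]
The goal is to show that, once $\alpha$ is chosen small enough in terms of $\|c_0\|_{L^\infty(\Omega)}$, $\chi$ and $\Omega$, this forces $\|n\|_{L^1(0,T_{max};\,L^\infty(\Omega))}<+\infty$, which contradicts \eqref{cri} with $r=1$, $s=+\infty$ (an admissible pair, since $\tfrac{3}{\infty}+\tfrac{2}{1}=2$). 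The argument rests on the observation that the assumed type~I bound is just strong enough to feed the drift term, yet parabolic smoothing then returns a \emph{strictly subcritical} bound for $n$.

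First I would record the elementary maximum principle for the second equation: since $n,c\ge0$, one has $c_t-\Delta c=-nc\le0$, whence $\|c(t)\|_{L^\infty(\Omega)}\le\|c_0\|_{L^\infty(\Omega)}$ for all $t$. Next I estimate $\nabla c$ through the Duhamel formula $c(t)=e^{(t-t_1)\Delta}c(t_1)-\int_{t_1}^te^{(t-\tau)\Delta}\big(n(\tau)c(\tau)\big)\,d\tau$ for the Neumann heat semigroup. Using the gradient smoothing bound $\|\nabla e^{\sigma\Delta}\varphi\|_{L^\infty}\le C\sigma^{-1/2}\|\varphi\|_{L^\infty}$ together with $\|nc\|_{L^\infty}\le\|c_0\|_{L^\infty}\|n\|_{L^\infty}$ and the assumed bound on $\|n\|_{L^\infty}$, and evaluating $\int_{t_1}^t(t-\tau)^{-1/2}(T_{max}-\tau)^{-1}\,d\tau\le\pi(T_{max}-t)^{-1/2}$, I obtain
\[
\|\nabla c(t)\|_{L^\infty(\Omega)}\le \bar B+C\|c_0\|_{L^\infty(\Omega)}\,\alpha\,(T_{max}-t)^{-1/2},
\]
where $\bar B$ is a finite constant from the (now harmless) initial slice. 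This is the self-similar rate for $\nabla c$, with a coefficient proportional to $\alpha$.

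Then I would treat $g(t):=\|n(t)\|_{L^\infty(\Omega)}$ via the Duhamel formula $n(t)=e^{(t-t_1)\Delta}n(t_1)-\chi\int_{t_1}^te^{(t-\tau)\Delta}\nabla\!\cdot\!\big(n(\tau)\nabla c(\tau)\big)\,d\tau$ and the bound $\|e^{\sigma\Delta}\nabla\!\cdot\!F\|_{L^\infty}\le C\sigma^{-1/2}\|F\|_{L^\infty}$. Crucially I do \emph{not} reinsert the crude bound for the outer factor $\|n(\tau)\|_{L^\infty}$; inserting only the $\nabla c$ estimate above yields the linear Volterra inequality
\[
g(t)\le h(t)+C\chi\bar B\int_{t_1}^t(t-\tau)^{-1/2}g(\tau)\,d\tau+\kappa\int_{t_1}^t(t-\tau)^{-1/2}(T_{max}-\tau)^{-1/2}\,g(\tau)\,d\tau,
\]
with $h$ bounded and $\kappa=C\chi\|c_0\|_{L^\infty(\Omega)}\alpha$ proportional to $\alpha$. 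The middle term has an integrable bounded kernel and is absorbed by a standard Gronwall argument; the third, scaling-critical term is the essential one. Testing it against the weight $(T_{max}-t)^{-b}$ and using $\int_{t_1}^t(t-\tau)^{-1/2}(T_{max}-\tau)^{-b-1/2}\,d\tau\le B(\tfrac12,b)\,(T_{max}-t)^{-b}$ shows the associated Volterra operator is a contraction on the weighted space as soon as $\kappa\,B(\tfrac12,b)<1$. Since $B(\tfrac12,b)\sim b^{-1}$ as $b\to0^+$, this can be arranged with an exponent $b=b(\kappa)\propto\kappa\propto\alpha$, and a Neumann-series iteration then gives $\|n(t)\|_{L^\infty(\Omega)}\le C\,(T_{max}-t)^{-b}$ with $b\propto\alpha$. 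Choosing $\alpha$ small enough (through the semigroup constants $C$, hence depending on $\|c_0\|_{L^\infty(\Omega)}$, $\chi$ and $\Omega$) so that $b<1$, one gets $\int_{t_1}^{T_{max}}(T_{max}-t)^{-b}\,dt<+\infty$, contradicting Theorem~\ref{mr1}; this fixes the admissible $\alpha$ and proves \eqref{lowblow}.

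The hard part will be the last step: extracting a strictly subcritical power $b<1$ from the scaling-critical Volterra inequality, i.e.\ showing that the drift, fed only by the self-similar $\nabla c$ bound with small coefficient, cannot sustain the full type~I rate of $n$. The naive coupled power-counting is stationary at the type~I exponent, so the gain must come quantitatively from the smallness of $\kappa$ through the singular Gronwall/contraction argument rather than from a bootstrap of exponents; carrying out the Neumann heat semigroup gradient estimates up to the boundary of the general (possibly non-convex) domain $\Omega$ is where the dependence of $\alpha$ on $\Omega$ enters.
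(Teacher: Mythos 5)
Your proposal takes a genuinely different route from the paper's. The paper never writes a Duhamel formula for this theorem: it feeds the type~I hypothesis $\|n(t)\|_{L^\infty(\Omega)}\le\alpha_0(T_{max}-t)^{-1}$ into the higher-order differential inequality \eqref{energy001}, where it converts the critical term $\int_\Omega n|\nabla c|^4dx$ into $\alpha_0(T_{max}-t)^{-1}\int_\Omega|\nabla c|^4dx$; a logarithmic Gronwall argument then gives $V(t)\le M(T_{max}-t)^{-(1+\eta)\widetilde{C}\alpha_0}$ with exponent below $\tfrac14$ by the choice $\alpha=1/(4\widetilde{C})$, an $L^2$ energy estimate plus Sobolev embedding converts this into a uniform bound on $\|n\|_{L^2(\Omega)}$, and the contradiction comes from the subcritical criterion of Corollary \ref{cor42}. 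You work instead entirely at the level of $\|n\|_{L^\infty(\Omega)}$ with the Neumann heat semigroup and aim at the endpoint $(r,s)=(1,\infty)$ of Theorem \ref{mr1}. The underlying mechanism is the same in spirit --- the small type~I constant multiplies exactly the scaling-critical term and is traded for a strictly subcritical growth exponent proportional to $\alpha$ --- and your key computations are correct: one does have $\int_{t_1}^t(t-\tau)^{-1/2}(T_{max}-\tau)^{-b-1/2}d\tau\le B(\tfrac12,b)(T_{max}-t)^{-b}$ and $B(\tfrac12,b)\sim b^{-1}$ as $b\to0^+$, so the weighted contraction does yield $\|n(t)\|_{L^\infty(\Omega)}\le C(T_{max}-t)^{-b}$ with $b\propto\alpha$. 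Your route bypasses all of Section 3 (it uses Theorem \ref{mr1} only as a black box) and gives $\alpha\sim(\chi\|c_0\|_{L^\infty(\Omega)})^{-1}$ instead of the paper's $\alpha\sim\|c_0\|_{L^\infty(\Omega)}^{-4/3}$.

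There is, however, one step that fails as written: the claim that the middle term ``is absorbed by a standard Gronwall argument.'' You cannot apply Gronwall to one summand of a Volterra inequality in isolation while keeping the critical term; the two operators must be inverted jointly. Quantitatively, with your definition of $\bar B$ (via $\|\nabla e^{\sigma\Delta}\varphi\|_{L^\infty}\le C\sigma^{-1/2}\|\varphi\|_{L^\infty}$ applied at $t_1$) one gets $\bar B\sim\|c_0\|_{L^\infty(\Omega)}(T_{max}-t_1)^{-1/2}$, so the weighted-space norm of the middle operator is of order $\chi\bar B\sqrt{T_{max}-t_1}\sim\chi\|c_0\|_{L^\infty(\Omega)}$: this is scale invariant, is not small, and cannot be made small by moving $t_1$ or shrinking $\alpha$, so the combined operator is not a contraction. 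This is repairable in two standard ways: (i) use the semigroup estimate $\|\nabla e^{\sigma\Delta}w\|_{L^\infty}\le C\|\nabla w\|_{L^\infty}$ so that $\bar B=C\|\nabla c(t_1)\|_{L^\infty(\Omega)}$ is a genuine constant with $t_1$ fixed, then restart the Duhamel formula for $n$ at $t_1'$ close to $T_{max}$, making the middle operator's norm $\sim\chi\bar B\sqrt{T_{max}-t_1'}$ arbitrarily small; or (ii) invert the middle operator first via Henry's singular Gronwall inequality (resolvent kernel), which only multiplies the critical operator's norm by a constant $C(\chi\|c_0\|_{L^\infty(\Omega)},\Omega)$, still beaten by the smallness of $\kappa\propto\alpha$. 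In addition, your Neumann-series step needs justification: $g$ is not a priori in the weighted class with exponent $b<1$, so you must show the remainder $L^Ng\to0$; this does follow from the a priori bound $g\le\alpha(T_{max}-t)^{-1}$ (membership with exponent $1$) together with smallness of the operator norms in that space, but it has to be said. With these repairs your argument closes, and it constitutes a valid alternative proof.
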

The above result indicates that if the local classical solution $(n,c)$ blows up in finite time at a self-similar rate, then there exists a $\tau\in [0,T_{max})$ such that $ \|n(t)\|_{L^\infty(\Omega)}  \geq \frac{\alpha}{2}(T_{max}-t)^{-1}$, for $t\in(\tau,T_{max})$. Besides, we will find later that for a blow-up solution, the constant $\alpha$ is proportional to $\|c_0\|^{-\frac43}_{L^\infty(\Omega)}$.
We note that the chemotaxis system \eqref{chemo1} can be viewed as a three-dimensional variant of the classical Keller--Segel model with a signal-independent sensitivity $\chi$ as well as a nonlinear chemical reaction between bacteria and the chemoattractant. This difference on the reaction term leads to different blow-up properties of the corresponding PDE system. For instance, a similar global lower blow-up rate was obtained in \cite[Theorem 1.2]{MizoSoup} for the classical Keller--Segel model, where the lower bound is independent of the initial data. However, their result fails to estimate the size of $\|n\|_{L^\infty(\Omega)}$ alone, but in terms of $L^\infty$-norm of the couple $(n, \nabla c)$ together.

Inspired by \cite{MizoSoup} for the classical Keller--Segel model, we can further prove a local version of the above non-degeneracy property Theorem \ref{mr2}.
First, we recall that for a local classical solution $(n, c)$ to problem \eqref{chemo1}--\eqref{assmp} with finite maximal time $T_{max}\in (0,+\infty)$, $x^*$ is said to be a \emph{blow-up point} if it belongs to the set
\begin{equation}
	\mathcal{B}=\bigg{\{}x^*\in\overline{\Omega}:\quad \limsup\limits_{t\nearrow T_{max}^-,\ \Omega\ni x\rightarrow x^*} \big(n(x,t)+|\nabla c(x,t)|_{\mathbb{R}^3}\big)=+\infty\bigg{\}}.\nonumber
\end{equation}
For any $x^*\in\overline{\Omega}$ and $\rho>0$, we define
\begin{equation}
	\Omega_{x^*,\rho}=B_{\rho}(x^*)\cap \overline{\Omega}
\end{equation}
where $B_{\rho}(x^*)\subset \mathbb{R}^3$ is the ball that centered at $x^*$ with radius $\rho$.
Then we have the following local non-degeneracy property for blow-up points of the solution to problem \eqref{chemo1}--\eqref{assmp}:
\begin{theorem}
	\label{mr3}
Suppose that the assumption of Theorem \ref{mr1} are satisfied and the maximal existing time $T_{max}$ is finite.
Let $x^*\in\overline{\Omega}$, $\rho>0$ and $t_0\in(0,T_{max})$. There exists a constant $\varepsilon>0$ such that, if
	\begin{equation}
		n(x,t)\leq \varepsilon(T_{max}-t)^{-1}\quad\mathrm{for\ all}\quad (x,t)\in \Omega_{x^*,\rho}\times(t_0,T_{max}),\nonumber
	\end{equation}
then $x^*$ is not a blow-up point. 	As a consequence, near any blow-up point $x^*\in \mathcal{B}$, we have the following local lower estimate:
	\begin{equation}
		\limsup\limits_{t\nearrow T_{max}^-,\ \Omega\ni x\rightarrow x^*} (T_{max}-t) n(x,t)\geq\varepsilon.\nonumber
	\end{equation}
\end{theorem}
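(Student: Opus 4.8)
The plan is to argue by contradiction and to upgrade the hypothesis into genuine local boundedness of the pair $(n,\nabla c)$ near $x^*$, which directly contradicts $x^*\in\mathcal B$. Two structural facts guide the whole argument. First, since $n,c\ge 0$ and $c$ solves $c_t=\Delta c-nc$ with Neumann data, the maximum principle gives $0\le c\le\|c_0\|_{L^\infty(\Omega)}$ globally; hence only $n$ and $\nabla c$ can become singular, and it suffices to bound both on $\Omega_{x^*,\rho/2}$ uniformly as $t\nearrow T_{max}$. Second, the assumed bound $n\le\varepsilon(T_{max}-t)^{-1}$ is invariant under the parabolic scaling $n_\lambda=\lambda^2 n(\lambda x,\lambda^2 t)$, $c_\lambda=c(\lambda x,\lambda^2 t)$, because $(T_{max}-t)\,n(x,t)$ is scale free. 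Thus the statement is an $\varepsilon$-regularity result: it is the \emph{smallness} of the constant $\varepsilon$, and not merely the finiteness of any norm of $n$, that must be exploited. Indeed, a direct insertion of the type~I bound into the critical norms of Theorem~\ref{mr1} produces a borderline divergent integral $\int^{T_{max}}(T_{max}-t)^{-r}\,dt$ precisely on the scaling threshold, so the argument cannot proceed by naively localizing the blow-up criterion.

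The key step is a local gradient estimate for $c$ that, crucially, never differentiates $n$. I would fix a cut-off $\phi\in C^\infty(\overline\Omega)$ with $\phi\equiv1$ on $\Omega_{x^*,\rho/2}$, $\mathrm{supp}\,\phi\subset\Omega_{x^*,\rho}$ and $\partial_\nu\phi|_{\partial\Omega}=0$ (the last normalization being needed only when $x^*\in\partial\Omega$, and achievable by choosing $\phi$ as a function of the distance to the boundary). Applying local maximal $L^p$-regularity on $\Omega_{x^*,\rho}$ to the equation $c_t=\Delta c-nc$, one controls $\|\phi c\|_{W^{2,p}}$ and $\|\phi c_t\|_{L^p}$ by the source $\|nc\|_{L^p(\Omega_{x^*,\rho})}\le\|c\|_{L^\infty}\,\|n\|_{L^p(\Omega_{x^*,\rho})}$ plus commutator terms supported on the annulus $\mathrm{supp}\,\nabla\phi$. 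The decisive point is that $nc$ carries \emph{no derivative of} $n$, so the hypothesis alone bounds the source by $C\varepsilon(T_{max}-t)^{-1}$; choosing $p>3$ so that $W^{2,p}(\Omega)\hookrightarrow C^1(\overline\Omega)$, the smallness of $\varepsilon$ lets the resulting borderline-in-time contribution be absorbed rather than accumulated, yielding a bound on $\nabla c$ over $\Omega_{x^*,\rho/2}$ uniform up to $T_{max}$. The commutator terms are harmless because the basic energy identity $\frac{d}{dt}\frac12\int_\Omega c^2=-\int_\Omega|\nabla c|^2-\int_\Omega nc^2$ furnishes $\nabla c\in L^2(\Omega\times(0,T_{max}))$, which supplies exactly the background integrability needed on $\mathrm{supp}\,\nabla\phi$.

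With $\nabla c$ now locally bounded, I would close the argument by localizing the higher-order estimates underlying Theorems~\ref{mr1}--\ref{mr2}. Writing the first equation as $n_t=\Delta n-\chi\,\nabla c\cdot\nabla n-\chi\, n\,\Delta c$ and testing against $\phi^2 n^{q-1}$ in a local Moser/De~Giorgi iteration, the drift coefficient $\nabla c$ and the zeroth-order coefficient $\Delta c$ are now bounded on $\Omega_{x^*,\rho/2}$, so a standard iteration upgrades the hypothesis $n\le\varepsilon(T_{max}-t)^{-1}$ to a uniform $L^\infty$ bound for $n$ on a smaller ball $\Omega_{x^*,\rho/4}$ up to $T_{max}$; parabolic Schauder theory then delivers a uniform bound on $\nabla c$ there as well. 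Consequently $\limsup_{t\nearrow T_{max}^-,\ \Omega\ni x\to x^*}\big(n+|\nabla c|\big)<\infty$, i.e. $x^*\notin\mathcal B$, which is the asserted non-degeneracy. The final lower estimate is the immediate contrapositive: if $x^*\in\mathcal B$, the hypothesis must fail for every choice of $\varepsilon$, whence $\limsup_{t\nearrow T_{max}^-,\ \Omega\ni x\to x^*}(T_{max}-t)\,n(x,t)\ge\varepsilon$.

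I expect the main obstacle to be the borderline nature of the time singularity. Because the type~I rate sits exactly on the scaling threshold, no critical norm of $n$ is finite, and the $\varepsilon$-smallness must be tracked quantitatively through every estimate so that all critical-in-time contributions are \emph{absorbed} (with an effective factor $C\varepsilon<1$) rather than integrated. Extracting a uniform threshold $\varepsilon$ from the weighted energy–absorption step, while simultaneously controlling the cut-off commutators on the annulus using only the background $L^2$ bound on $\nabla c$, is the delicate part. A secondary technical nuisance is the boundary case $x^*\in\partial\Omega$, where the local regularity estimates must be taken up to $\partial\Omega$ compatibly with the no-flux conditions; this is handled by the usual flattening/reflection of the boundary adapted to the Neumann data.
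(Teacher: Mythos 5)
Your overall skeleton (first obtain a local bound on $\nabla c$ near $x^*$, then run a local Moser-type iteration on the $n$-equation with controlled drift to conclude $x^*\notin\mathcal{B}$) is the same as the paper's, which follows Mizoguchi--Souplet \cite{MizoSoup}. However, the decisive step of your argument contains a genuine gap: you claim that local maximal $L^p$-regularity applied to $c_t=\Delta c-nc$, with source bounded by $\|c_0\|_{L^\infty(\Omega)}\,\varepsilon(T_{max}-t)^{-1}$, yields a bound on $\nabla c$ over $\Omega_{x^*,\rho/2}$ that is \emph{uniform} up to $T_{max}$, because ``the smallness of $\varepsilon$ lets the borderline contribution be absorbed.'' This cannot work. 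In a linear estimate the source $nc$ is external data, not a quantity dominated by the left-hand side, so there is nothing to absorb into: the factor $\varepsilon$ scales the output but cannot change the power of the singularity. Note also that a source of size $(T_{max}-t)^{-1}$ lies in no $L^r_t$ space, so maximal regularity up to $T_{max}$ is not even applicable; what the linear theory does give is the Duhamel bound
\[
|\nabla c(x,t)|\ \lesssim\ \int_{t_0}^t (t-s)^{-\frac12}\,\varepsilon\,(T_{max}-s)^{-1}\,ds + K\ \sim\ \varepsilon\,(T_{max}-t)^{-\frac12}+K,
\]
i.e.\ a gain of exactly half a power. This is precisely the content of Lemma \ref{lemheat} (quoted from \cite{MizoSoup}) with $\mu=1$: the hypothesis yields only $|\nabla c|\leq C^*\varepsilon(T_{max}-t)^{-1/2}+K$, which is still singular no matter how small $\varepsilon$ is. Since your subsequent Moser/De Giorgi iteration for $n$ requires the drift $\nabla c$ and the coefficient $\Delta c$ to be bounded on $\Omega_{x^*,\rho/2}$, the argument collapses at this point.

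The missing idea is an intermediate \emph{nonlinear} step in which the smallness of $\varepsilon$ enters through an exponent rather than a prefactor. Feeding the singular-but-small bound $|\nabla c|\lesssim \varepsilon(T_{max}-t)^{-1/2}$ into the $n$-equation tested against a cut-off times $n^{p}$, the drift term is handled by Young's inequality and produces a Gronwall coefficient of order $\varepsilon^{2}(T_{max}-t)^{-1}$, whose exponential is a \emph{small power} of the singularity:
\[
\int_{B_{\frac{\delta}{2}}(x^*)} n^{p+1}(x,t)\,dx\ \leq\ K_1\,(T_{max}-t)^{-C_{10}\varepsilon^{2}},
\]
which is exactly \eqref{lcnon1} in the paper (and (3.19) in \cite{MizoSoup}). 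Only now does the linear theory close: in the Duhamel representation for $\nabla c$ with this $L^{p}$ source, the kernel singularity $(t-s)^{\frac{3}{2q}-2}$ beats the divergence $(T_{max}-s)^{-C_{10}\varepsilon^{2}/p}$ provided $1-\frac{3}{2q}+\frac{C_{10}\varepsilon^{2}}{p}<0$, which holds once $\varepsilon$ is small; this gives the uniform local bound on $\nabla c$, after which the uniform bound on $n$ (and hence $x^*\notin\mathcal{B}$) follows as you describe. In short, the $\varepsilon$-smallness must be routed through the nonlinear coupling (small prefactor $\to$ small Gronwall exponent $\to$ convergent Duhamel integral), not through an absorption in a linear estimate; without this two-step bootstrap your proof does not go through, and with it, it becomes the paper's proof.
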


Before concluding this section, we would like to stress some new features of the present paper.

First, Theorem \ref{mr1} provides some blow-up criteria on the bacteria density  $n$ in  critical scaling invariant spaces for arbitrary bounded smooth domain $\Omega\subset \mathbb{R}^3$ that is not necessary to be convex. Similar blow-up criteria for the Cauchy problem of system \eqref{chemofluid} with fluid interaction in $\mathbb{R}^d$ ($d=2,3$) have been obtained in \cite{CKL13,CKL14}, whose corresponding fluid-free versions are as follows:
\begin{align}
&\|\nabla c(t)\|_{L^2(0,T_{max};\, L^\infty(\mathbb{R}^d))}=+\infty,\label{criCKL}\\
&\|n(t)\|_{L^r(0,T_{max};\, L^s(\mathbb{R}^d))}=+\infty,\quad \frac{d}{s}+\frac{2}{r}\leq 2,\quad \frac{d}{2}<s\leq +\infty.\label{criCKLa}
\end{align}
We note that \eqref{criCKL} is a blow-up criterion for the chemoattractant concentration $c$ in the critical scaling-invariant space $L^2(0,T; L^\infty(\mathbb{R}^d))$.
One can easily derive a blow-up criterion corresponding to \eqref{criCKL} when the domain $\Omega\subset \mathbb{R}^3$ is smooth and bounded (see Proposition \ref{blc} below).
However, in order to derive the blow-up criterion \eqref{cri} for $n$ corresponding to \eqref{criCKLa} in bounded smooth domain, one major difficulty comes from a certain integration term on the boundary $\partial \Omega$ that takes the following form:
\beq
\int_{\partial\Omega}\frac{1}{n}\frac{\partial}{\partial\nu}|\nabla n|^2 dS.\nonumber
\eeq
 The above difficulty can be overcome by making use of the elementary lemma due to Mizoguchi \& Souplet \cite[Lemma 4.2]{MizoSoup} together with the trace theorem.
 In \cite{JWZ}, a similar difficulty concerning a boundary term for $c$ has been encountered in order to derive the energy inequality \eqref{ener0} (see Lemma \ref{lowe} below).
 Such kind of difficulties related to the boundary integration terms were avoided by imposing the essential assumption that $\Omega$ is convex in \cite{WinklerCPDE, WinkARMA}.

Second, we use a different approach to derive suitable estimates for the local classical solutions to problem \eqref{chemo1}--\eqref{assmp}.
More precisely, we choose to establish higher-order estimates of $n$ in Sobolev spaces rather than integrability in Lebesgue spaces $L^p$ for some large $p$.
This is motivated by the observation that the first equation of \eqref{chemo1} can be realized as the law of conservation of mass.
 In fact, if we introduce a new variable $\mathbf{w}$ as an ``effective velocity" such that
\begin{equation}
	\mathbf{w}= \chi\nabla c-\nabla \log n,\nonumber
\end{equation}
then the first equation of \eqref{chemo1} can be re-written into the following form
\begin{equation}
	n_t+\nabla\cdot(n\mathbf{w})=0.\nonumber
\end{equation}
Thus, one can easily deduce from system \eqref{chemo1}  a ``momentum" equation for $\mathbf{w}$.
We note that an upper bound for the kinetic energy associated with the new system of unknowns $(n,\mathbf{w})$
\begin{equation}\label{KE}
	E(n,\mathbf{w})=\frac12 \int_\Omega  n|\mathbf{w}|^2dx
\end{equation}
on $(0,T_{max})$ will imply an estimate for $\int_\Omega |\nabla \sqrt{{n}}|^2 dx$, which is one of the principal parts of $E(n,\mathbf{w})$. Due to the Sobolev embedding theorem ($d=3$), $\|n\|_{L^3(\Omega)}$ is uniformly bounded in $(0,T_{max})$  and global existence of classical solutions is a direct consequence of Corollary \ref{cor42}. Therefore, to seek a proper estimate of \eqref{KE} turns out to be crucial.
However, the effective velocity $\mathbf{w}$ and the kinetic energy $E(n,\mathbf{w})$ will not be explicitly introduced in the subsequent proofs due to possible difficulties from the boundary conditions (if $\Omega=\mathbb{T}^3$, i.e., the torus, the calculations are more straightforward). Instead, we try to control $\int_\Omega n|\nabla c|^2 dx$ and $\int_\Omega n|\nabla\log n|^2 dx$, respectively, which are the two principal parts of \eqref{KE}. The idea to establish higher-order estimates gives us a new insight on the mechanism of chemotaxis models. For example, it can provide an alternative intuitive proof for the existence of classical solutions to chemotaxis systems with the logistic source studied in \cite{LX15, WinklerCPDE10, Lank15}.

Moreover, the kinetic reformulation of the Keller--Segel model and higher-order estimates may help us to study the blow-up solutions to chemotaxis systems as well. Our approach provides a way to obtain separate lower blow-up estimates for $n$ alone.
As it has been mentioned before, lower blow-up estimates for the fully parabolic classical Keller--Segal model were recently established in \cite[Theorem 1.2]{MizoSoup} only for the couple $(n,|\nabla c|)$.
It is possible to prove lower global blow-up estimate for $n$ alone by using our method. This improvement will be illustrated in a forthcoming work.

The rest of this paper is organized as follows. In Section 2, we state some preliminary results on the local well-posedness of problem \eqref{chemo1}--\eqref{assmp}. In Section 3, we derive several higher-order estimates for the local classical solution $(n,c)$.
The last section will be devoted to the proof of our main results Theorems \ref{mr1}, \ref{mr2} and \ref{mr3}.

\section{Preliminaries}
\subsection{Notations}

Throughout this paper, we denote by $L^q(\Omega)$, $W^{k,q}(\Omega)$, $1\leq q\leq\infty$, $k\in\mathbb{N}$  the usual Lebesgue and Sobolev spaces, respectively, and as usual, $H^k(\Omega)=W^{k,2}(\Omega)$.  $\|\cdot\|_{B}$ denotes the norm in the Banach space $B$. For arbitrary vectors $\mathbf{u}=(u_1,...,u_d)^{T}, \mathbf{v}=(v_1,...,v_d)^{T}\in \mathbb{R}^d$, we denote $\mathbf{u}\cdot \mathbf{v}=\sum_{i=1}^d u_i v_i$ the inner product in $\mathbb{R}^d$, while for two $d\times d$ matrices $M_1, M_2$, we denote $M_1 : M_2=\mathrm{trace}(M_1 M_2^T)$.
For any matrix $M\in \mathbb{R}^{d\times d}$, we use the Frobenius norm $|M| =\sqrt{\mathrm{trace}(MM^T)}=\sqrt{\sum_{i,j=1}^dM_{ij}M_{ij}}$. The upper case letters $C$, $C_i$ stand for genetic constants possibly depending on the domain $\Omega$, the coefficient $\chi$ as well as the initial data. Special dependence will be pointed out explicitly in the text, if necessary.

\subsection{Local well-posedness}
It is easy to see that system \eqref{chemo1} under assumption \eqref{assmp} can be reformulated as a triangular system (see, e.g. \cite{Tao}). Then the local well-posedness of problem \eqref{chemo1}--\eqref{assmp} easily follows from the well-known parabolic regularity theory \cite{Amann} and a classical fixed point argument.
Thus, we have the following result (see e.g., \cite[Lemma 2.1]{WinklerCPDE} where a more general system with fluid interactions has been investigated, see also \cite[Lemma 2.1]{Tao}).

\begin{proposition} \label{lc}
Suppose that $\Omega\in\mathbb{R}^3$ is a bounded domain with smooth boundary.
Let $n_0$ and $c_0$ be positive and satisfy $(n_0,v_0)\in C^0(\overline{\Omega})\times W^{1,q}(\Omega)$ for some $q>3$.
Then there exists a $T_{max}>0$ and a unique local classical solution $(n,c)$ to problem \eqref{chemo1}--\eqref{assmp} such that
	\begin{equation}
		\nonumber (n,c)\in \big(C([0,T_{max}); C^0(\overline{\Omega})\times W^{1,q}(\Omega))\big) \cap \big(C^{2,1}(\overline{\Omega}\times (0, T_{max}))\big)^2.
	\end{equation}
	Moreover, $n$ and $c$ satisfy the inequalities
$$ n(x,t)> 0, \quad 0< c(x,t) \leq \|c_0\|_{L^\infty(\Omega)},\quad \mathrm{in}\ \Omega\times (0,T_{max})$$
 as well as the mass conservation property
$$\|n(t)\|_{L^1(\Omega)}=\|n_0\|_{L^1(\Omega)},\quad \forall\, t\in (0,T_{max}).$$
If $T_{max}<+\infty$, then
 \beq \label{cri00}
 \|n(t)\|_{L^\infty(\Omega)}+\|c(t)\|_{W^{1,q}(\Omega)}\nearrow +\infty, \quad\text{as}\;\;t\nearrow T_{max}^-.
 \eeq
\end{proposition}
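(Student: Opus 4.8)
The plan is to recast problem \eqref{chemo1}--\eqref{assmp} as a fixed-point problem for the Neumann heat semigroup and then upgrade the resulting mild solution to a classical one. Writing $(e^{t\Delta})_{t\geq 0}$ for the analytic semigroup generated by the Neumann Laplacian on $\Omega$, a solution satisfies the Duhamel formulas
\beq
n(t)=e^{t\Delta}n_0-\chi\int_0^t e^{(t-s)\Delta}\nabla\cdot\big(n(s)\nabla c(s)\big)\,ds,\qquad
c(t)=e^{t\Delta}c_0-\int_0^t e^{(t-s)\Delta}\big(n(s)c(s)\big)\,ds.
\eeq
I would seek a fixed point of the associated map $\Phi:(n,c)\mapsto(\tilde n,\tilde c)$ in the Banach space $X_T=C([0,T];C^0(\overline\Omega))\times C([0,T];W^{1,q}(\Omega))$, on a closed ball centered at $(e^{t\Delta}n_0,e^{t\Delta}c_0)$. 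The essential ingredients are the standard $L^p$--$L^q$ smoothing estimates for the Neumann heat semigroup, in particular $\|e^{t\Delta}\nabla\cdot f\|_{L^\infty(\Omega)}\lesssim t^{-1/2-3/(2p)}\|f\|_{L^p(\Omega)}$ and $\|\nabla e^{t\Delta}f\|_{L^q(\Omega)}\lesssim t^{-1/2}\|f\|_{L^q(\Omega)}$. The hypothesis $q>3$ is used precisely so that $W^{1,q}(\Omega)\hookrightarrow L^\infty(\Omega)$ and hence $\nabla c\in L^\infty$, which makes the drift term $n\nabla c$ and its divergence amenable to these estimates.

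With these bounds in hand, a routine computation shows that for $T$ sufficiently small the singular factors $t^{-1/2-3/(2p)}$ (integrable near $s=t$ once $p>3$) remain controllable, so that $\Phi$ maps the ball into itself and is a contraction. Banach's fixed-point theorem then yields a unique local mild solution on $[0,T]$, and a standard maximal-interval argument extends it to $[0,T_{max})$. To promote this to a classical solution I would exploit the triangular structure: once $c$ is known, the first equation is a \emph{linear} parabolic equation for $n$ with coefficients built from $\nabla c$ and $\Delta c$, while the second is a linear parabolic equation for $c$ with potential $n$; alternating parabolic $L^p$ and Schauder estimates (as in \cite{Amann}) bootstrap the regularity up to $(n,c)\in\big(C^{2,1}(\overline\Omega\times(0,T_{max}))\big)^2$.

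The qualitative properties follow from the maximum principle and elementary integration. Viewing the $n$-equation as a linear parabolic equation in $n$ with smooth (given) coefficients, the strong maximum principle gives $n>0$ from $n_0>0$; with $n\geq 0$ in hand, the relation $c_t-\Delta c=-nc\leq 0$ shows, by comparison with the constant supersolution $\|c_0\|_{L^\infty(\Omega)}$ and the subsolution $0$, that $0<c\leq\|c_0\|_{L^\infty(\Omega)}$. Mass conservation is obtained by integrating the first equation over $\Omega$ and noting that the Neumann conditions \eqref{chemo0} make the total flux $\frac{\partial n}{\partial\nu}-\chi n\frac{\partial c}{\partial\nu}$ vanish on $\partial\Omega$, whence $\frac{d}{dt}\int_\Omega n\,dx=0$. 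Finally, the extensibility criterion \eqref{cri00} is the usual blow-up alternative: if $\|n(t)\|_{L^\infty(\Omega)}+\|c(t)\|_{W^{1,q}(\Omega)}$ stayed bounded up to a finite $T_{max}$, the contraction estimate could be run with a time step independent of the base point, allowing continuation past $T_{max}$ and contradicting maximality.

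I expect the main obstacle to be the divergence term $\nabla\cdot(n\nabla c)$, which entails a loss of one derivative: the fixed-point space must be chosen so that $n$ is merely continuous while $c$ carries one spatial derivative, and the semigroup smoothing must be calibrated so that applying $e^{(t-s)\Delta}\nabla\cdot$ recovers the lost derivative with a time singularity that is still integrable. Matching these function-space exponents against the Sobolev threshold $q>3$, and closing the estimate simultaneously for both components of $\Phi$, is the delicate point; everything else is routine and can be cited from \cite{WinklerCPDE, Tao, Amann}.
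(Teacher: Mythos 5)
Your proposal is correct and follows essentially the same route as the paper, which gives no detailed proof of Proposition~\ref{lc} but simply cites the triangular structure of the system together with the standard fixed-point/parabolic-regularity argument of Amann, Winkler and Tao --- precisely the argument you reconstruct. One cosmetic point: to obtain the bounds on $c$ you should first get $c>0$ from the minimum principle applied to the linear equation $c_t-\Delta c+nc=0$ with bounded coefficient $n\geq 0$, and only then use the constant supersolution $\|c_0\|_{L^\infty(\Omega)}$ for the upper bound; as written, invoking $-nc\leq 0$ tacitly presupposes $c\geq 0$.
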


\begin{remark}\label{REM1}
Proposition \ref{lc} implies that for any fixed $\tau_0\in (0, T_{max})$, it holds
\begin{equation}
	\sup_{0\leq t\leq \tau_0}(\|n(t)\|_{L^\infty(\Omega)}+\|c(t)\|_{W^{1,q}(\Omega)})\leq C_{\tau_0}
    \quad\text{and}\;\;\;\;(n(\tau_0), c(\tau_0))\in (C^2(\overline{\Omega}))^2,\non
\end{equation}	
 where the constant $C_{\tau_0}$ and $\|n(\tau_0)\|_{C^2(\overline{\Omega})}$, $\|c(\tau_0)\|_{C^2(\overline{\Omega})}$
 depend on $\|n_0\|_{L^\infty(\Omega)}$, $\|c_0\|_{W^{1,q}(\Omega)}$, $\Omega$, $\chi$ as well as $\tau_0$.
\end{remark}

%%%%%%%%%%%%%%%%%%%%%%%%%%%%%%%%%%%%%%%%%%%%%%%%%%%%%%%%%%%%%%
\section{A priori estimates}

First, we recall the following lower-order energy inequality for the local classical solution $(n,c)$ to problem \eqref{chemo1}--\eqref{assmp} on any bounded smooth domain $\Omega\subset \mathbb{R}^3$, which is a special case of \cite[Lemma 3.1]{JWZ}:

\begin{lemma}\label{lowe}
Suppose that the assumptions of Theorem \ref{mr1} hold. The local classical solution $(n,c)$ to problem \eqref{chemo1}--\eqref{assmp} satisfies
\begin{align}
& \frac{d}{dt}\left\{\int_{\Omega}n\log n dx + 2 \int_\Omega |\na \sqrt{c}|^2dx \right\}\nonumber\\
& \qquad +\int_{\Omega}\frac{|\nabla n|^2}{n}dx + \frac{\chi}{2}\int_\Omega c|\nabla^2\log c|^2dx +\frac{\chi}2 \int_\Omega n\frac{|\na c|^2}{c}dx\nonumber\\
&\ \ \leq C\|\sqrt{c}\|_{L^2(\Omega)}^2 \qquad \forall\, t\in(0,T_{max}),
\label{ener0}
\end{align}
where $C>0$ is a constant depending on $\Omega$ and $\chi$, but it is independent of $t$ and $T_{max}$.
\end{lemma}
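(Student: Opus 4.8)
The plan is to derive the energy inequality \eqref{ener0} by combining the standard entropy dissipation identity for the $n$-equation with a carefully chosen functional for $c$, then controlling the resulting boundary term. Let me sketch each piece.

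First I would compute the evolution of $\int_\Omega n\log n\,dx$. Testing the equation $n_t=\Delta n-\chi\nabla\cdot(n\nabla c)$ against $\log n +1$ and integrating by parts (the Neumann boundary condition $\partial n/\partial\nu=0$ kills the boundary terms here) gives
\begin{equation}
\frac{d}{dt}\int_\Omega n\log n\,dx + \int_\Omega \frac{|\nabla n|^2}{n}\,dx = \chi\int_\Omega \nabla n\cdot\nabla c\,dx = -\chi\int_\Omega n\,\Delta c\,dx.\nonumber
\end{equation}
Using the second equation $c_t=\Delta c-nc$ to substitute $\Delta c = c_t + nc$, the troublesome term becomes $-\chi\int_\Omega n(c_t+nc)\,dx$, and the $-\chi\int_\Omega n^2 c\,dx$ piece is sign-definite (good, since $n,c>0$) while $-\chi\int_\Omega n\,c_t\,dx$ must be absorbed into the $c$-functional. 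This is the motivation for adding $2\int_\Omega|\nabla\sqrt c|^2\,dx$ to the entropy.

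Second, I would differentiate $2\int_\Omega|\nabla\sqrt c|^2\,dx = \frac12\int_\Omega \frac{|\nabla c|^2}{c}\,dx$ in time. Here the key algebraic identity is the pointwise formula relating $\nabla\cdot\big(\frac{\nabla c}{c}\,c_t\big)$-type terms to $c\,|\nabla^2\log c|^2$; after integrating by parts one expects to produce exactly the dissipation terms $\frac{\chi}{2}\int_\Omega c|\nabla^2\log c|^2\,dx$ and $\frac{\chi}{2}\int_\Omega n\frac{|\nabla c|^2}{c}\,dx$ appearing on the left of \eqref{ener0}, together with a term that cancels the leftover $-\chi\int_\Omega n\,c_t\,dx$ from the first step. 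The computation leading to $\int_\Omega c|\nabla^2\log c|^2\,dx$ is the standard but delicate Bochner-type manipulation (differentiating $\frac{d}{dt}\int_\Omega|\nabla\sqrt c|^2$ and using $c_t=\Delta c-nc$), and it is where most of the routine calculation lives.

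The main obstacle is the boundary term. Integration by parts in the $c$-functional produces a boundary integral of the form $\int_{\partial\Omega}\frac{\partial}{\partial\nu}|\nabla\sqrt c|^2\,dS$ (equivalently involving $\partial_\nu|\nabla c|^2$), which does not vanish merely from the Neumann condition and whose sign is not controlled on a general non-convex domain. On a convex domain one would invoke the inequality $\partial_\nu|\nabla c|^2\le 0$ (via the second fundamental form), but here $\Omega$ is arbitrary. The remedy, as the introduction signals, is the elementary lemma of Mizoguchi--Souplet together with the trace theorem: one estimates the boundary integral by the curvature of $\partial\Omega$ times a lower-order trace norm of $\nabla\sqrt c$, then interpolates that trace norm between the interior dissipation term $\int_\Omega c|\nabla^2\log c|^2\,dx$ (which controls the $H^1$-type norm) and a lower-order quantity, absorbing the former into the left-hand side with a small constant. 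Since \eqref{ener0} is quoted as a special case of \cite[Lemma 3.1]{JWZ}, I would invoke that reference for the precise trace/interpolation step, keeping only the final harmless remainder $C\|\sqrt c\|_{L^2(\Omega)}^2$ on the right. Collecting the two differentiated functionals and cancelling the shared $-\chi\int_\Omega n\,c_t\,dx$ term then yields \eqref{ener0} exactly.
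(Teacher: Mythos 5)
The paper itself offers no proof of Lemma \ref{lowe}: the inequality is imported verbatim as a special case of \cite[Lemma 3.1]{JWZ}. Your outline reconstructs exactly the argument of that reference (and of Winkler's earlier convex-domain version): the entropy identity for $n$, a weighted Dirichlet-type functional in $c$ whose evolution produces the dissipation terms $\int_\Omega c|\nabla^2\log c|^2dx$ and $\int_\Omega n\frac{|\nabla c|^2}{c}dx$, and Lemma \ref{MS} combined with the trace theorem and interpolation to absorb the boundary integral $\int_{\partial\Omega}\frac{1}{c}\frac{\partial}{\partial\nu}|\nabla c|^2dS$ into the interior dissipation --- the same device the paper later uses for the $n$-boundary term in its first estimate. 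So the route is the intended one.

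There is, however, a genuine flaw in your cross-term bookkeeping, and it sits precisely at the cancellation on which the whole scheme rests. First, an imprecision: the cancellation is exact and total. The evolution of the (properly weighted) $c$-functional contributes the cross term $-\chi\int_\Omega\nabla n\cdot\nabla c\,dx=\chi\int_\Omega n\Delta c\,dx=\chi\int_\Omega nc_t\,dx+\chi\int_\Omega n^2c\,dx$, which annihilates \emph{both} pieces of $-\chi\int_\Omega n\Delta c\,dx$ coming from the entropy identity; your claim that $-\chi\int_\Omega n^2c\,dx$ survives as a helpful sign-definite term is incorrect (consistently, no such term appears in \eqref{ener0}). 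Second, and fatally for the proof as written: the cancellation happens only if the $c$-part of the functional carries the weight $\chi$, i.e.\ one must differentiate $\int_\Omega n\log n\,dx+2\chi\int_\Omega|\nabla\sqrt{c}|^2dx$, which is what $\frac12\int_\Omega\frac{\chi(c)}{f(c)}|\nabla c|^2dx$ of \cite{JWZ} becomes for $\chi(c)=\chi$, $f(c)=c$. With the unweighted functional you propose, the $n$-equation contributes $+\chi\int_\Omega\nabla n\cdot\nabla c\,dx$ while the $c$-functional contributes only $-\int_\Omega\nabla n\cdot\nabla c\,dx$, leaving a residual $(\chi-1)\int_\Omega\nabla n\cdot\nabla c\,dx$. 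This residual cannot be absorbed in general: the natural estimate $\big|\int_\Omega\nabla n\cdot\nabla c\,dx\big|\le\big(\int_\Omega\frac{|\nabla n|^2}{n}dx\big)^{1/2}\big(\|c_0\|_{L^\infty(\Omega)}\int_\Omega n\frac{|\nabla c|^2}{c}dx\big)^{1/2}$ closes only when $(\chi-1)^2\|c_0\|_{L^\infty(\Omega)}$ is small, which is exactly the kind of smallness condition the lemma is designed to avoid. The $\frac{\chi}{2}$ prefactors of the dissipation terms in \eqref{ener0} are the fingerprint of the $\chi$-weighted functional (half of $\chi\int_\Omega c|\nabla^2\log c|^2dx$ is spent absorbing the boundary term), so the ``$2\int_\Omega|\nabla\sqrt{c}|^2dx$'' in the statement should be read as $2\chi\int_\Omega|\nabla\sqrt{c}|^2dx$; your proof must make this correction, since as literally described the central cancellation fails whenever $\chi\neq1$.
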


The above energy inequality together with the uniform estimate on $\|c\|_{L^\infty}$ (see Proposition \ref{lc}) %and Remark \ref{REM1}
implies the following estimates:

\begin{proposition} For any $T\in (0,T_{max})$, the local classical solution $(n,c)$ to problem \eqref{chemo1}--\eqref{assmp} satisfies
\begin{align}
& \sup\limits_{0\leq t\leq T}\left(\int_\Omega n\log n dx+ \|\nabla c\|_{L^2(\Omega)}^2\right)\leq C(1+T),\nonumber\\
& \int_0^T \int_{\Omega}\frac{|\nabla n|^2}{n}dx dt+ \int_0^T\|\Delta c\|_{L^2(\Omega)}^2dt +\int_0^T\int_\Omega n|\nabla c|^2dxdt\leq C(1+T),\nonumber
%& \int_0^T\int_\Omega n^2c^2dx dt+ \int_0^t\int_\Omega c_t^2dxd\tau\leq C,
\end{align}
where $C$ is a positive constant depending on $\Omega$, $\chi$ and the initial data, but is independent of the time $T$.
\end{proposition}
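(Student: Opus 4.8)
The plan is to integrate the energy inequality of Lemma~\ref{lowe} in time and to supplement it by a standard test of the $c$-equation with $-\Delta c$. I would write the left-hand side of \eqref{ener0} as $\frac{d}{dt}\mathcal{E}(t)+\mathcal{D}(t)$, where
\[
\mathcal{E}(t)=\int_\Omega n\log n\,dx+2\int_\Omega|\na\sqrt c|^2dx,\qquad \mathcal{D}(t)=\int_\Omega\frac{|\na n|^2}{n}dx+\frac{\chi}{2}\int_\Omega c|\na^2\log c|^2dx+\frac{\chi}{2}\int_\Omega n\frac{|\na c|^2}{c}dx\ge 0.
\]
Since $0<c\le\|c_0\|_{L^\infty(\Omega)}$ by Proposition~\ref{lc}, the right-hand side is bounded by a constant, $C\|\sqrt c\|_{L^2(\Omega)}^2=C\int_\Omega c\,dx\le C\|c_0\|_{L^\infty(\Omega)}|\Omega|=:C_0$, so that $\frac{d}{dt}\mathcal{E}+\mathcal{D}\le C_0$ on $(0,T_{max})$.

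First I would integrate this from $0$ to $t$; the value $\mathcal{E}(0)$ is finite because $n_0\in C^0(\overline\Omega)$ and $c_0\in W^{1,q}(\Omega)\hookrightarrow C^0(\overline\Omega)$ is positive, hence bounded below (alternatively one starts from a small $\tau_0>0$ and absorbs the contribution on $[0,\tau_0]$ using Remark~\ref{REM1}). Dropping the nonnegative term $\int_0^t\mathcal{D}$ gives $\mathcal{E}(t)\le\mathcal{E}(0)+C_0T$ for all $t\in[0,T]$. Using the elementary pointwise bound $s\log s\ge-1/e$, so that $\int_\Omega n\log n\,dx\ge-|\Omega|/e$, one extracts from $\mathcal{E}(t)$ both $\int_\Omega n\log n\,dx\le C(1+T)$ and $\int_\Omega|\na\sqrt c|^2dx\le C(1+T)$; the latter converts to the stated gradient bound via $|\na\sqrt c|^2=|\na c|^2/(4c)\ge|\na c|^2/(4\|c_0\|_{L^\infty(\Omega)})$, which gives $\|\na c\|_{L^2(\Omega)}^2\le4\|c_0\|_{L^\infty(\Omega)}\int_\Omega|\na\sqrt c|^2dx\le C(1+T)$. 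Keeping instead the dissipation and using $\mathcal{E}(T)\ge-|\Omega|/e$ yields $\int_0^T\mathcal{D}\,dt\le\mathcal{E}(0)+|\Omega|/e+C_0T\le C(1+T)$; in particular $\int_0^T\int_\Omega\frac{|\na n|^2}{n}$ and $\int_0^T\int_\Omega n\frac{|\na c|^2}{c}$ are bounded by $C(1+T)$, and the last one with $c\le\|c_0\|_{L^\infty(\Omega)}$ controls $\int_0^T\int_\Omega n|\na c|^2\le C(1+T)$.

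The only quantity absent from \eqref{ener0} is $\int_0^T\|\Delta c\|_{L^2(\Omega)}^2\,dt$, which needs a separate argument. I would test $c_t=\Delta c-nc$ with $-\Delta c$ and integrate by parts using $\pa c/\pa\nu=0$, obtaining $\frac12\frac{d}{dt}\|\na c\|_{L^2(\Omega)}^2+\|\Delta c\|_{L^2(\Omega)}^2=\int_\Omega nc\,\Delta c\,dx$. A second integration by parts rewrites the coupling term as $\int_\Omega nc\,\Delta c\,dx=-\int_\Omega c\,\na n\cdot\na c\,dx-\int_\Omega n|\na c|^2dx$, and the decisive point is that the survivor is handled without any $L^2$-bound on $n$: writing $c\,\na n\cdot\na c=\frac{\na n}{\sqrt n}\cdot(\sqrt n\,c\,\na c)$ and applying Cauchy--Schwarz and Young's inequality gives $-\int_\Omega c\,\na n\cdot\na c\,dx\le\frac12\int_\Omega n|\na c|^2dx+\frac{\|c_0\|_{L^\infty(\Omega)}^2}{2}\int_\Omega\frac{|\na n|^2}{n}dx$. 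Absorbing $\frac12\int_\Omega n|\na c|^2$ into the left-hand side and integrating in time then leads to
\[
\int_0^T\|\Delta c\|_{L^2(\Omega)}^2\,dt\le\frac12\|\na c_0\|_{L^2(\Omega)}^2+\frac{\|c_0\|_{L^\infty(\Omega)}^2}{2}\int_0^T\int_\Omega\frac{|\na n|^2}{n}\,dx\,dt\le C(1+T),
\]
by the bound on $\int_0^T\int_\Omega\frac{|\na n|^2}{n}$ already obtained, which closes the second displayed estimate.

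The main obstacle is precisely this last step. Because no a~priori control of $\|n\|_{L^2(\Omega)}$ is available, a naive Young splitting of $\int_\Omega nc\,\Delta c$ would fail, and the coupling term must instead be balanced, after integration by parts, against exactly the two dissipation quantities $\int_\Omega\frac{|\na n|^2}{n}$ and $\int_\Omega n|\na c|^2$ supplied by the energy inequality. The remaining technical care concerns justifying the integration in time starting from $t=0$, which is covered by the initial regularity together with Remark~\ref{REM1}; note also that the term $\int_\Omega c|\na^2\log c|^2$ in $\mathcal{D}$ is only used through its nonnegativity and need not be unpacked.
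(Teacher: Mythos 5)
Your proof is correct, and for most of the stated bounds it follows exactly the paper's (largely implicit) argument: integrate the energy inequality \eqref{ener0} of Lemma \ref{lowe} in time, bound its right-hand side by a constant using $0<c\leq\|c_0\|_{L^\infty(\Omega)}$ from Proposition \ref{lc}, use $s\log s\geq -e^{-1}$ to separate the two parts of the energy, and convert $\int_\Omega|\na\sqrt{c}|^2dx$ and $\int_\Omega n\frac{|\na c|^2}{c}dx$ into the stated quantities by the same $L^\infty$ bound on $c$. The one place where you genuinely depart from the paper is the bound $\int_0^T\|\Delta c\|_{L^2(\Omega)}^2dt\leq C(1+T)$. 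The paper claims the whole proposition follows from \eqref{ener0} together with the $L^\infty$ bound alone, so its intended source of the $\Delta c$ control is the dissipation term $\frac{\chi}{2}\int_\Omega c|\na^2\log c|^2dx$: writing $\na^2 c=c\,\na^2\log c+\frac{\na c\otimes\na c}{c}$, one has
\begin{equation}
\int_\Omega|\Delta c|^2dx\leq 3\int_\Omega|\na^2 c|^2dx\leq 6\|c_0\|_{L^\infty(\Omega)}\left(\int_\Omega c|\na^2\log c|^2dx+\int_\Omega\frac{|\na c|^4}{c^3}dx\right),\nonumber
\end{equation}
and the last integral is again controlled by $\int_\Omega c|\na^2\log c|^2dx$ via Lemma \ref{lm1} with $h(s)=s$ (the same device used for $n$ in \eqref{n3}, combined with the pointwise inequality \eqref{ptw}). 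Your route instead tests the $c$-equation with $-\Delta c$ and, after an integration by parts, absorbs the coupling term into exactly the two dissipation integrals $\int_\Omega\frac{|\na n|^2}{n}dx$ and $\int_\Omega n|\na c|^2dx$ furnished by \eqref{ener0}; this is equally valid, and your observation that a naive Young splitting of $\int_\Omega nc\,\Delta c\,dx$ would require an unavailable $L^2$ bound on $n$ is precisely the right caveat. What the paper's route buys is economy: everything is read off from the single inequality \eqref{ener0} plus Winkler's Lemma \ref{lm1}, with no further testing of the equations. What your route buys is elementariness (no need for Lemma \ref{lm1} or the Hessian-of-logarithm structure) and, as a by-product, a clean differential inequality for $\|\na c\|_{L^2(\Omega)}^2$ in which the coupling contributes with a favorable sign.
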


Next, we derive some higher-order {\it a priori} estimates for the local classical solution $(n,c)$.
As mentioned in the introduction, we aim to control $\int_\Omega n|\nabla\log n|^2 dx$  and $\int_\Omega n|\nabla c|^2 dx$ that are the two principle parts of the kinetic energy \eqref{KE}.

To this end, the following two lemmas turn out to be useful:
\begin{lemma}(Winkler \cite[Lemma 3.3]{WinklerCPDE})
    \label{lm1}
	Let $h\in C^1(0,\infty)$ be positive and let $\Theta(s):=\int_1^s\frac{d\sigma}{h(\sigma)}$ for $s>0$. Assume that $\Omega$ is a smooth bounded domain in $\mathbb{R}^d$ with $d\geq1$. Then for any positive function $\varphi\in C^2(\overline{\Omega})$ fulfilling $\frac{\partial\varphi}{\partial\nu}=0$ on $\partial\Omega$, it holds
	\begin{equation}
		\int_\Omega \frac{h'(\varphi)}{h^3(\varphi)}|\nabla\varphi|^4 dx
       \leq (2+\sqrt{d})^2\int_\Omega \frac{h(\varphi)}{h'(\varphi)}|\nabla^2\Theta(\varphi)|^2 dx.\nonumber
	\end{equation}
\end{lemma}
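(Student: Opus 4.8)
The plan is to reduce the inequality to one integration by parts, exploiting the Neumann condition, followed by two Cauchy--Schwarz estimates. Throughout I write $a=h(\varphi)$, $b=h'(\varphi)$ (so $a,b>0$, which is what makes both weights $\tfrac{h'}{h^3}$ and $\tfrac{h}{h'}$ positive), $p=\nabla\varphi$, and I let $H=\nabla^2\Theta(\varphi)$ be the Hessian of $\Theta(\varphi)$. Since $\Theta'=1/h$, a direct differentiation gives the pointwise relation $H_{ij}=\tfrac1a\,\partial_{ij}\varphi-\tfrac{b}{a^2}p_ip_j$, equivalently
\[
\partial_{ij}\varphi=a\,H_{ij}+\tfrac{b}{a}\,p_ip_j,\qquad \Delta\varphi=a\,\mathrm{tr}(H)+\tfrac{b}{a}\,|p|^2 .
\]
Denote by $I$ and $J$ the left- and right-hand sides of the claimed inequality; both are nonnegative, and I aim to show $I\le(2+\sqrt d)^2 J$.

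First I would rewrite the integrand of $I$ as a divergence. Because $\tfrac{h'(\varphi)}{h(\varphi)^3}\nabla\varphi=-\tfrac12\nabla\!\big(h(\varphi)^{-2}\big)$, we have $I=-\tfrac12\int_\Omega|\nabla\varphi|^2\,\nabla\varphi\cdot\nabla\!\big(h(\varphi)^{-2}\big)\,dx$. Integrating by parts and discarding the boundary integral, which vanishes because $\tfrac{\partial\varphi}{\partial\nu}=0$ makes $|\nabla\varphi|^2\nabla\varphi\cdot\nu=0$ on $\partial\Omega$, yields
\[
I=\frac12\int_\Omega\frac{1}{a^2}\,\nabla\cdot\big(|\nabla\varphi|^2\nabla\varphi\big)\,dx=\int_\Omega\frac{p^{T}(\nabla^2\varphi)\,p}{a^2}\,dx+\frac12\int_\Omega\frac{|p|^2\,\Delta\varphi}{a^2}\,dx .
\]
This is the only place where the hypotheses on $\partial\Omega$ and on $\varphi$ are used, and it is the step I expect to need the most care in making rigorous (the regularity required to integrate by parts and the precise vanishing of the boundary term).

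The crux is the next step: I would substitute the two Hessian relations above to re-express the right-hand side in terms of $H$ rather than $\nabla^2\varphi$. This is the essential insight, since a naive estimate through $|\nabla^2\varphi|$ lands on the wrong weighted integral and cannot be compared with $J$. A short calculation shows that the two copies of $\tfrac{b}{a^3}|p|^4$ generated by the substitution add up to exactly $\tfrac32 I$; moving this term to the left gives the clean identity
\[
I=-2\int_\Omega\frac{p^{T}H\,p}{a}\,dx-\int_\Omega\frac{|p|^2\,\mathrm{tr}(H)}{a}\,dx .
\]
Now the conclusion follows from two elementary bounds in the Frobenius norm: $|p^{T}Hp|=|H:(pp^{T})|\le|H|\,|p|^2$ and $|\mathrm{tr}(H)|=|H:\mathrm{Id}|\le\sqrt d\,|H|$, so that $I\le(2+\sqrt d)\int_\Omega\frac{|H|\,|p|^2}{a}\,dx$. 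Splitting the weight as $\frac{|H|\,|p|^2}{a}=\big(\sqrt{a/b}\,|H|\big)\big(\sqrt{b/a^3}\,|p|^2\big)$ and applying the Cauchy--Schwarz inequality in $L^2(\Omega)$ factors this as $(2+\sqrt d)\,J^{1/2}I^{1/2}$. Dividing by $I^{1/2}$ (the case $I=0$ being trivial) and squaring gives $I\le(2+\sqrt d)^2 J$, which is precisely the asserted estimate.
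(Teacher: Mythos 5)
Your proof is correct, and it takes essentially the same route as the proof of Winkler's Lemma 3.3 that the paper cites for this statement: a single integration by parts exploiting the Neumann condition, the pointwise bounds $|p^{T}Hp|\le |H|\,|p|^2$ and $|\mathrm{tr}(H)|\le \sqrt{d}\,|H|$, and a weighted $L^2$ Cauchy--Schwarz estimate followed by dividing by $I^{1/2}$ and squaring. The only cosmetic difference is that Winkler changes variables to $\psi=\Theta(\varphi)$ before integrating by parts, whereas you substitute the Hessian relation $\nabla^2\varphi=aH+\tfrac{b}{a}\,pp^{T}$ afterwards; both routes produce the identical intermediate identity $I=-2\int_\Omega a^{-1}p^{T}Hp\,dx-\int_\Omega a^{-1}|p|^2\,\mathrm{tr}(H)\,dx$ and the same conclusion.
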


\begin{lemma}(Mizoguchi \& Souplet \cite[Lemma 4.2]{MizoSoup})
 \label{MS}
 For the bounded domain $\Omega$ and $w\in C^2(\overline{\Omega})$ satisfying $\frac{\partial w}{\partial \nu}=0$ on $\partial\Omega$, we have
\beq
 \frac{\partial|\nabla w|^2}{\partial\nu}\leq2\kappa|\nabla w|^2\quad\text{on   }\partial\Omega,\nonumber
\eeq
where $\kappa=\kappa(\Omega)>0$ is an upper bound for the curvatures of $\partial\Omega$.
\end{lemma}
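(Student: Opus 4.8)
The statement is a purely pointwise, differential--geometric identity on $\partial\Omega$, so the plan is to fix an arbitrary boundary point $x_0\in\partial\Omega$ and reduce the inequality to a quadratic--form estimate in the tangent space at $x_0$. First I would extend the outward unit normal $\nu$ to a $C^1$ unit vector field in a neighborhood of $\partial\Omega$ (for instance via the signed distance function, whose gradient furnishes such an extension with $|\nu|\equiv1$). The Neumann condition $\frac{\partial w}{\partial\nu}=\nabla w\cdot\nu=0$ says precisely that, along $\partial\Omega$, the vector $\nabla w$ is tangent to the boundary; if $\nabla w(x_0)=0$ the claimed inequality is trivial, so I may assume $\nabla w(x_0)\neq 0$ and treat it as a nonzero tangent vector.

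The key step is to differentiate the scalar function $g:=\nabla w\cdot\nu$, which vanishes identically on $\partial\Omega$, in the tangential direction $\nabla w(x_0)$. Since $g\equiv0$ on $\partial\Omega$ and $\nabla w(x_0)$ is tangent, its directional derivative vanishes:
\[
0=\sum_i w_i\,\partial_i\Big(\sum_k w_k\nu_k\Big)=\sum_{i,k} w_i w_{ik}\,\nu_k+\sum_{i,k} w_i w_k\,\partial_i\nu_k .
\]
Using the symmetry of the Hessian, the first sum equals $\sum_k\nu_k\big(\tfrac12\partial_k|\nabla w|^2\big)=\tfrac12\frac{\partial}{\partial\nu}|\nabla w|^2$. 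Hence
\[
\frac{\partial}{\partial\nu}|\nabla w|^2=-2\sum_{i,k} w_i w_k\,\partial_i\nu_k \qquad\text{at }x_0 .
\]
This is the crucial reduction: the normal derivative of $|\nabla w|^2$ has been expressed as a quadratic form evaluated on the tangent vector $\nabla w(x_0)$, with coefficient matrix the tangential Jacobian $(\partial_i\nu_k)$ of the normal field.

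It remains to recognize and bound this quadratic form. Restricted to tangent vectors, $(\partial_i\nu_k)$ is (up to sign) the Weingarten/shape operator of $\partial\Omega$, whose eigenvalues are the principal curvatures of the boundary; in particular its value on a tangent vector depends only on the intrinsic boundary geometry and not on the chosen extension of $\nu$. Diagonalizing in a principal frame $\{e_j\}$ with principal curvatures $\kappa_j$ gives $\sum_{i,k}w_iw_k\partial_i\nu_k=\sum_j\kappa_j(\nabla w\cdot e_j)^2$, and since $\kappa$ is an upper bound for the curvatures we have $\kappa_j\ge-\kappa$ for every $j$, whence $\sum_{i,k}w_iw_k\partial_i\nu_k\ge-\kappa|\nabla w|^2$. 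Substituting this into the identity above yields $\frac{\partial}{\partial\nu}|\nabla w|^2\le2\kappa|\nabla w|^2$ at $x_0$, and since $x_0$ was arbitrary the lemma follows.

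I expect the main obstacle to be the careful bookkeeping of the geometric identification in the last step: pinning down the correct sign convention relating $(\partial_i\nu_k)$ to the second fundamental form, and verifying that this quadratic form is indeed extension--independent when evaluated on tangent vectors. The differentiation of the boundary identity and the use of Hessian symmetry are routine once one observes that $\nabla w$ is tangential; the genuinely geometric content is entirely contained in bounding the tangential part of $D\nu$ by the curvature of $\partial\Omega$.
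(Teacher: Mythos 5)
Your proof is correct, but note that the paper itself offers no proof of this statement: Lemma \ref{MS} is quoted verbatim from Mizoguchi \& Souplet \cite{MizoSoup} (their Lemma 4.2), so the only argument "in the paper" is a citation. What you have written is essentially the standard argument behind the cited result: extend $\nu$ via the signed distance function, observe that the Neumann condition makes $\nabla w$ tangential, differentiate the identity $\nabla w\cdot\nu=0$ along the tangential direction $\nabla w$, use the symmetry of the Hessian to recognize the term $\frac12\partial_\nu|\nabla w|^2$, and bound the remaining quadratic form $\sum_{i,k}w_iw_k\partial_i\nu_k$ --- the second fundamental form evaluated at the tangent vector $\nabla w$ --- by the curvature bound. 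In the reference the computation is typically carried out in local (graph or principal) coordinates rather than with an extended normal field, but the mechanism is identical, and your coordinate-free version is, if anything, cleaner; your observation that $D_\tau\nu$ is extension-independent for tangential $\tau$ is what makes it legitimate.

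Two small points. First, the case $\nabla w(x_0)=0$ deserves one line rather than the word "trivial": at such a point $\partial_\nu|\nabla w|^2=2\sum_{i,j}w_iw_{ij}\nu_j=0$, so both sides of the inequality vanish. Second --- the caveat you yourself flag --- the deduction "$\kappa$ is an upper bound for the curvatures, hence $\kappa_j\ge-\kappa$" is a matter of orientation convention. With your convention ($\sum_{i,k}\tau_i\tau_k\partial_i\nu_k=|\tau|^2/R$ on a ball of radius $R$, so convex domains have $\kappa_j>0$), what the inequality requires is the lower bound $\sum_j\kappa_j(\nabla w\cdot e_j)^2\ge-\kappa|\nabla w|^2$, i.e.\ a bound on the curvatures in the direction of non-convexity (equivalently on $|\kappa_j|$, or an upper bound for the curvatures computed with respect to the opposite orientation). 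This is exactly how the constant $\kappa$ is meant in \cite{MizoSoup} and how the lemma is used in the paper (on convex domains one even gets $\partial_\nu|\nabla w|^2\le0$). Once that reading is fixed, there is no mathematical gap.
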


\textbf{First estimate.} We derive the following estimate on $\int_\Omega n|\nabla\log n|^2 dx$:

\begin{lemma}
 For any $t\in (0,T_{max})$, the local classical solution $(n,c)$ to problem \eqref{chemo1}--\eqref{assmp} satisfies the following inequality
\begin{align}
&\frac{d}{dt}\int_\Omega\frac n2|\na\log n|^2dx + \frac12 \int_\Omega n|\nabla^2\log n|^2dx\nonumber\\
&\quad \leq -\chi \int_\Omega(\nabla n\otimes\na\log n) :\nabla^2 cdx +\chi \int_\Omega\Delta n\Delta cdx+C_0\|n\|_{L^1(\Omega)},
 \label{ener1}
\end{align}
where $C_0$ is a positive constant depending only on $\Omega$.
\end{lemma}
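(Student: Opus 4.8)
The plan is to test the mass equation so as to produce the Fisher-type quantity $\int_\Omega \frac{|\nabla n|^2}{2n}\,dx$ and to extract the Hessian dissipation $\int_\Omega n|\nabla^2\log n|^2\,dx$ by means of a weighted Bochner identity, the boundary contribution being handled afterwards through Lemma \ref{MS}, the trace theorem and Lemma \ref{lm1}. Throughout I write $v=\log n$, which is legitimate since $n>0$ and $v\in C^2(\overline{\Omega})$ for $t\in(0,T_{max})$ by Remark \ref{REM1}; note that $\nabla n = n\nabla v$ and that $\frac{\partial v}{\partial\nu}=\frac{1}{n}\frac{\partial n}{\partial\nu}=0$ on $\partial\Omega$, while $\int_\Omega\frac{n}{2}|\nabla\log n|^2dx=\int_\Omega\frac{|\nabla n|^2}{2n}dx$. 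First I would differentiate in time and integrate by parts once, discarding the boundary term via $\frac{\partial n}{\partial\nu}=0$, to reach
\[
\frac{d}{dt}\int_\Omega\frac{|\nabla n|^2}{2n}dx = -\int_\Omega\frac{\Delta n}{n}\,n_t\,dx+\frac{1}{2}\int_\Omega\frac{|\nabla n|^2}{n^2}\,n_t\,dx.
\]
Substituting $n_t=\Delta n-\chi\nabla\cdot(n\nabla c)$ then splits the right-hand side into a diffusion part $H$ (terms carrying $\Delta n$) and a chemotactic part $K$ (terms carrying $\chi\nabla\cdot(n\nabla c)$).

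For the diffusion part the key step is a weighted Bochner identity: integrating by parts twice and using $\frac{\partial v}{\partial\nu}=0$ I expect
\[
\int_\Omega n(\Delta v)^2dx=\int_\Omega n|\nabla^2 v|^2dx-\int_\Omega n|\nabla v|^2\Delta v\,dx+\int_\Omega n\,\nabla^2v:(\nabla v\otimes\nabla v)\,dx-\frac{1}{2}\int_{\partial\Omega}n\frac{\partial|\nabla v|^2}{\partial\nu}dS.
\]
Expanding $H=-\int_\Omega n(\Delta v+|\nabla v|^2)(\Delta v+\tfrac12|\nabla v|^2)dx$, inserting this identity, and using the elementary relation $\int_\Omega n|\nabla v|^2\Delta v\,dx=-\int_\Omega n|\nabla v|^4dx-2\int_\Omega n\,\nabla^2v:(\nabla v\otimes\nabla v)\,dx$ (again integration by parts with $\frac{\partial v}{\partial\nu}=0$), I expect every quartic interior term to cancel, leaving exactly
\[
H=-\int_\Omega n|\nabla^2\log n|^2dx+\frac{1}{2}\int_{\partial\Omega}n\frac{\partial|\nabla\log n|^2}{\partial\nu}dS.
\]

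For the chemotactic part, using $\nabla\cdot(n\nabla c)=n\nabla v\cdot\nabla c+n\Delta c$ I would rewrite $K=\chi\int_\Omega\Delta n(\nabla v\cdot\nabla c)\,dx+\chi\int_\Omega\Delta n\Delta c\,dx-\frac{\chi}{2}\int_\Omega\nabla\cdot(n\nabla c)|\nabla v|^2dx$, and integrate by parts in each term, discarding boundary contributions via $\frac{\partial n}{\partial\nu}=\frac{\partial c}{\partial\nu}=0$. The first and third integrals each generate a copy of $\pm\chi\int_\Omega\nabla n\cdot(\nabla^2 v\cdot\nabla c)\,dx$ which cancel (after using symmetry of $\nabla^2v$ and $\nabla n=n\nabla v$), the residue being precisely $-\chi\int_\Omega(\nabla n\otimes\nabla\log n):\nabla^2c\,dx$, while $\chi\int_\Omega\Delta n\Delta c\,dx$ survives intact. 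This reproduces the two chemotactic integrals on the right-hand side of \eqref{ener1}.

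The main obstacle is the leftover boundary term $\frac{1}{2}\int_{\partial\Omega}n\frac{\partial|\nabla\log n|^2}{\partial\nu}dS$. Here I would apply Lemma \ref{MS} to $w=\log n$ (admissible since $\frac{\partial\log n}{\partial\nu}=0$), bounding it by $\kappa\int_{\partial\Omega}n|\nabla\log n|^2dS$. The trace theorem $W^{1,1}(\Omega)\hookrightarrow L^1(\partial\Omega)$ applied to $g=n|\nabla\log n|^2$ then controls this by $\int_\Omega\big(n|\nabla v|^2+n|\nabla v|^3+n|\nabla^2v||\nabla v|\big)dx$. The crucial ingredient to close the estimate is Lemma \ref{lm1} with $\varphi=n$ and $h(s)=s$, for which $\Theta(s)=\log s$ and the lemma reduces to $\int_\Omega n|\nabla\log n|^4dx\le(2+\sqrt{3})^2\int_\Omega n|\nabla^2\log n|^2dx$; combined with Young's inequality this lets me absorb all three interior integrals into $\frac{1}{2}\int_\Omega n|\nabla^2\log n|^2dx$ plus a remainder bounded by $C_0\|n\|_{L^1(\Omega)}$ with $C_0=C_0(\Omega)$. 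Moving the absorbed half-dissipation to the left-hand side yields exactly \eqref{ener1}. The two subtleties to watch are that this absorption is precisely what degrades the dissipation coefficient from $1$ to $\tfrac12$, and that $C_0$ must be kept independent of $\chi$, which it is since the whole boundary analysis is $\chi$-free and the chemotactic terms are retained explicitly.
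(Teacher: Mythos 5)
Your proposal is correct and is essentially the paper's own argument: both derivations arrive at the same exact interior identity
\[
\frac{d}{dt}\int_\Omega\frac n2|\na\log n|^2dx+\int_\Omega n|\nabla^2\log n|^2dx
=\frac12\int_{\partial\Omega}\frac1n\frac{\partial}{\partial\nu}|\nabla n|^2dS
-\chi\int_\Omega(\nabla n\otimes\na\log n):\nabla^2 c\,dx
+\chi\int_\Omega\Delta n\Delta c\,dx
\]
(your boundary term $\frac12\int_{\partial\Omega}n\frac{\partial}{\partial\nu}|\nabla\log n|^2dS$ coincides with the paper's because $\frac{\partial n}{\partial\nu}=0$), and both then control the boundary term in the identical way, via Lemma \ref{MS}, the trace theorem, Lemma \ref{lm1} with $h(s)=s$, and Young's inequality, absorbing into half of the Hessian dissipation with remainder $C_0\|n\|_{L^1(\Omega)}=C_0\|\sqrt n\|^2_{L^2(\Omega)}$. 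The only difference is organizational: the paper extracts the dissipation by differentiating the equation (multiplying by $n^{-1}$, taking the gradient, testing with $\nabla n$) together with the identities $2\Delta\sqrt n/\sqrt n=\Delta n/n-|\nabla n|^2/(2n^2)$ and $2n\nabla\big(\Delta\sqrt n/\sqrt n\big)=\nabla\cdot(n\nabla^2\log n)$, whereas you differentiate the functional directly and invoke a weighted Bochner identity --- the same integrations by parts performed in a different order.
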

\begin{proof}
Multiplying the first equation  in \eqref{chemo1} by $n^{-1}$, then taking gradient with respect to $x$ of the resultant, we obtain that
\beq
\label{ch1}
\pa_t\na\log n+ \chi\na c\cdot (\na^2\log n) +\chi \na\log n \cdot (\na^2 c)+ \chi \na\Delta c- \na\big(\frac{\Delta n}{n}\big)=0.
\eeq
Multiplying \eqref{ch1} by $\na n=n\na\log n$ and integrating over $\Omega$, we get
\begin{align}
&\frac{d}{dt}\int_\Omega\frac n2 |\na\log n|^2dx -\int_\Omega\frac12 |\na\log n|^2[\Delta n-\chi \nabla \cdot(n \na c)] dx
    +\chi \int_\Omega\na n\cdot \na\Delta cdx\non\\
&\quad\ \ \ +\chi \int_\Omega({\nabla n\cdot\nabla^2 c}+n\nabla c\cdot \na^2\log n)\cdot\na\log ndx
- \int_\Omega n\na\big(\frac{\Delta n}{n}\big)\cdot \na\log n dx\non\\
&\quad =0.\label{dif}
\end{align}
Using integration by parts and the boundary condition \eqref{chemo0}, we have
\beq
\label{n1}
-\int_\Omega n\na(\frac{\Delta n}{n})\cdot \na\log n dx=\int_\Omega \frac{|\Delta n|^2}{n}dx.
\eeq
Besides, a direct calculation yields that
 \beq\label{n2}
 2\frac{\Delta \sqrt{n}}{\sqrt{n}}=\frac{\Delta n}{n}-\frac{|\na n|^2}{2n^2},\quad 2n\na(\frac{\Delta\sqrt{n}}{\sqrt{n}})=\nabla \cdot (n \nabla^2\log n).
 \eeq
Then it follows from \eqref{n1}, \eqref{n2} and integration by parts that
\begin{align}
&\int_\Omega \left[-n\na(\frac{\Delta n}{n})\cdot \na\log n -\frac12 |\nabla \log n|^2 \Delta n\right] dx\non\\
&\quad =
\int_\Omega \frac{|\Delta n|^2}{n}dx-\int_\Omega\frac{|\na n|^2}{2n^2}\Delta ndx\non\\
&\quad =\int_\Omega2 \frac{\Delta \sqrt{n}}{\sqrt{n}} \Delta n dx\non\\
&\quad =-\int_\Omega2\na n\cdot \na(\frac{\Delta \sqrt{n}}{\sqrt{n}})dx\non\\
&\quad =-\int_\Omega (\na \log n)\cdot [\nabla\cdot (n \na^2\log n)]dx\non\\
&\quad =- \int_{\partial\Omega} [(n \na^2\log n)\cdot\nu]\cdot (\na \log n) ds +\int_\Omega n|\na^2\log n|^2dx\non\\
&\quad =-\frac12\int_{\partial\Omega}\frac1n\frac{\partial}{\partial\nu}|\nabla n|^2dS+\int_\Omega n|\na^2\log n|^2dx.\non
\end{align}
On the other hand, using integration by parts again, we find that
\begin{align}
&\int_\Omega \frac{\chi}{2} |\na \log n|^2\nabla \cdot (n \na c)dx + \chi \int_\Omega (n \na c\cdot \na^2\log n)\cdot \na\log ndx\non\\
&\quad =-\frac{\chi}{2}\int_\Omega n\na c\cdot\na|\na \log n|^2dx +\chi\int_\Omega (n\na c\cdot \na^2\log n)\cdot \na\log ndx\non\\
&\quad =0,\non
\end{align}
and
\begin{align}
\chi \int_\Omega\na n\cdot \na\Delta cdx=-\chi\int_\Omega\Delta n\Delta cdx.\non
\end{align}
Collecting all the above estimates, we infer from \eqref{dif} that
\begin{align}
& \frac{d}{dt}\int_\Omega\frac n2|\na\log n|^2dx+\int_\Omega n |\nabla^2\log n|^2dx\non\\
&\quad =\frac12\int_{\partial\Omega}\frac1n\frac{\partial}{\partial\nu}|\nabla n|^2dS
-\chi\int_\Omega(\nabla n\otimes\na\log n):\nabla^2 c dx
+\chi\int_\Omega\Delta n\Delta cdx.\non
\end{align}

In order to deal with the integration term on the boundary $\partial\Omega$, we make use of Lemma \ref{MS} together with the trace theorem, in the same way as in \cite[Lemma 2.4]{JWZ}, we can derive the following boundary estimate such that for any $\e\in(0,1)$
	\begin{equation}\label{nb0}
 \Bigg|\frac{1}{2}\int_{\partial\Omega}\frac1n\frac{\partial}{\partial\nu}|\nabla n|^2dS\Bigg|
     \leq \e\int_{\Omega} n|\Delta \log n|^2dx+\e\int_\Omega\frac{|\nabla n|^4}{n^3}dx+C_{\e}\|\sqrt{n}\|_{L^2(\Omega)}^2,\non
	\end{equation}
where $C_{\e}>$ is a positive constant depending only on $\Omega$ and $\e$.
 Moreover, applying Lemma \ref{lm1} and simply taking $h(s)=s$, we have
\begin{equation}
    \label{n3}
	\int_\Omega\frac{|\nabla n|^4}{n^3}dx\leq (2+\sqrt{3})^2\int_\Omega n|\nabla^2\log n|^2 dx.
\end{equation}
Due to the point-wise inequality
  $|\Delta z|^2\leq 3|\nabla^2 z|^2$ for any $z\in C^2(\overline{\Omega})$, it holds
\begin{equation}\label{ptw}
	|\Delta\log n|^2\leq 3|\nabla^2\log n|^2.\end{equation}
Thus, taking $\e>0$ small enough such that
\begin{equation}
	\left[(2+\sqrt{3})^2+3\right]\e<\frac12,\non
\end{equation}
we arrive at our conclusion \eqref{ener1}. This completes the proof.
\end{proof}
%%%%%%%%%%%%%%%%%%%%%%%%%%%%%%%%%%%%%%%%%%%%%%%%%%%%%%%%%%%%%

\textbf{Second estimate.} The estimate for $\int_\Omega n|\nabla c|^2 dx$ is more involved. We have

\begin{lemma}
For any $t\in(0,T_{max})$, the local classical solution $(n,c)$ of problem \eqref{chemo1}--\eqref{assmp} satisfies
\begin{align}
&\frac{d}{dt}\left(\int_\Omega \frac12 n|\nabla c|^2 +\frac {n^2c}{2\chi}+\frac{1}{\chi^2} n^2\right)dx
  +\frac{1}{2\chi}\int_\Omega cn^3dx+\frac{1}{\chi}\int_\Omega c|\nabla n|^2dx\non\\
&\qquad +\frac12\int_\Omega n|\Delta c|^2dx + \frac{2}{\chi^2}\int_\Omega |\nabla n|^2dx \non\\
&\quad \leq \e_0\int_\Omega \frac{|\Delta n|^2}{n}dx
            +\e_0\int_\Omega\frac{|\nabla n|^4}{n^3}dx
            +C_1\int_\Omega n|\na c|^4dx.\label{ccc0}
\end{align}
where $\epsilon_0$ is an arbitrary positive constant and the constant $C_1>0$ depends on $\Omega$, $\e_0$ and $\chi$.
\end{lemma}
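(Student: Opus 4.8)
The plan is to differentiate the composite functional
\[
\mathcal{F}(t):=\int_\Omega\Big(\tfrac12\, n|\nabla c|^2+\tfrac{n^2c}{2\chi}+\tfrac{1}{\chi^2}n^2\Big)\,dx
\]
in time, substitute the evolution equations $n_t=\Delta n-\chi\nabla\cdot(n\nabla c)$ and $c_t=\Delta c-nc$ from \eqref{chemo1}--\eqref{assmp}, and reorganize the resulting terms by repeated integration by parts. The two extra pieces $\frac{n^2c}{2\chi}$ and $\frac{1}{\chi^2}n^2$ are not arbitrary: they are chosen precisely so that the transport-type cross terms generated by $\frac{d}{dt}\int_\Omega\frac12 n|\nabla c|^2$ cancel exactly, leaving only genuinely dissipative quantities on the left and a small, controllable remainder on the right. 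Because the Neumann conditions \eqref{chemo0} give $\frac{\partial n}{\partial\nu}=\frac{\partial c}{\partial\nu}=0$, every boundary integral that arises below will vanish; unlike the first estimate, Lemma \ref{MS} is \emph{not} needed here.

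I would then treat the three time derivatives separately. In $\frac{d}{dt}\int_\Omega\frac12 n|\nabla c|^2$, the crucial step is to integrate $\int_\Omega n\nabla c\cdot\nabla\Delta c$ onto divergence form, namely $\int_\Omega n\nabla c\cdot\nabla\Delta c=-\int_\Omega\Delta c\,\nabla\cdot(n\nabla c)\,dx=-\int_\Omega n|\Delta c|^2\,dx-\int_\Omega\Delta c\,\nabla n\cdot\nabla c\,dx$, which produces the dissipation $\int_\Omega n|\Delta c|^2$ \emph{directly} (rather than $\int_\Omega n|\nabla^2 c|^2$) and so avoids any gap between $|\Delta c|^2$ and $|\nabla^2 c|^2$. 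Expanding $-\int_\Omega n\nabla c\cdot\nabla(nc)$ yields the cross terms $-\int_\Omega cn\nabla c\cdot\nabla n$ and $-\int_\Omega n^2|\nabla c|^2$. Differentiating the auxiliary pieces, I expect $\frac{d}{dt}\int_\Omega\frac{n^2c}{2\chi}$ to contribute, after integration by parts, the good terms $-\frac1\chi\int_\Omega c|\nabla n|^2$ and $-\frac{1}{2\chi}\int_\Omega cn^3$, together with $+\int_\Omega cn\nabla n\cdot\nabla c+\int_\Omega n^2|\nabla c|^2$ (cancelling the two cross terms above) plus residual $n\nabla n\cdot\nabla c$ terms, while $\frac{d}{dt}\int_\Omega\frac{1}{\chi^2}n^2$ supplies the good term $-\frac{2}{\chi^2}\int_\Omega|\nabla n|^2$ and exactly the $n\nabla n\cdot\nabla c$ contribution needed to annihilate those residuals. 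After all cancellations, moving the dissipative terms to the left gives the identity
\[
\frac{d}{dt}\mathcal{F}+\int_\Omega n|\Delta c|^2+\frac1\chi\int_\Omega c|\nabla n|^2+\frac{1}{2\chi}\int_\Omega cn^3+\frac{2}{\chi^2}\int_\Omega|\nabla n|^2
=-\tfrac12\int_\Omega\nabla n\cdot\nabla|\nabla c|^2+\tfrac\chi2\int_\Omega n\nabla c\cdot\nabla|\nabla c|^2-\int_\Omega\Delta c\,\nabla n\cdot\nabla c.
\]

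Finally I would estimate the three surviving right-hand terms. For the first, integrating by parts once more and using $\frac{\partial n}{\partial\nu}=0$ gives $-\frac12\int_\Omega\nabla n\cdot\nabla|\nabla c|^2=\frac12\int_\Omega\Delta n|\nabla c|^2$, bounded by Young's inequality as $\e_0\int_\Omega\frac{|\Delta n|^2}{n}+C\int_\Omega n|\nabla c|^4$. For the second, integrating by parts onto $\nabla\cdot(n\nabla c)=\nabla n\cdot\nabla c+n\Delta c$ splits it into $-\frac\chi2\int_\Omega n\Delta c|\nabla c|^2$, controlled by $\frac14\int_\Omega n|\Delta c|^2+C\int_\Omega n|\nabla c|^4$, and $-\frac\chi2\int_\Omega(\nabla n\cdot\nabla c)|\nabla c|^2$, controlled (via Young with exponents $4$ and $\tfrac43$, writing $|\nabla n||\nabla c|^3=\frac{|\nabla n|}{n^{3/4}}\cdot n^{3/4}|\nabla c|^3$) by $\e_0\int_\Omega\frac{|\nabla n|^4}{n^3}+C\int_\Omega n|\nabla c|^4$. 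The last term $-\int_\Omega\Delta c\,\nabla n\cdot\nabla c$ is handled analogously, contributing $\frac14\int_\Omega n|\Delta c|^2+\e_0\int_\Omega\frac{|\nabla n|^4}{n^3}+C\int_\Omega n|\nabla c|^4$. The two $\frac14\int_\Omega n|\Delta c|^2$ pieces are absorbed into the dissipation $\int_\Omega n|\Delta c|^2$ on the left, leaving exactly $\frac12\int_\Omega n|\Delta c|^2$ and yielding \eqref{ccc0} with $C_1$ depending on $\Omega$, $\e_0$ and $\chi$. The main obstacle I anticipate is bookkeeping rather than analysis: one must choose the integration-by-parts direction at each step so that (i) the auxiliary terms cancel the cross terms exactly and (ii) only the controllable quantities $\frac{|\Delta n|^2}{n}$, $\frac{|\nabla n|^4}{n^3}$ and $n|\nabla c|^4$, together with a fraction of $\int_\Omega n|\Delta c|^2$, remain on the right. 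Routing $\int_\Omega n\nabla c\cdot\nabla\Delta c$ into divergence form is what secures the dissipation in the form $\int_\Omega n|\Delta c|^2$ and keeps every boundary integral zero, so that no curvature bound is required at this stage.
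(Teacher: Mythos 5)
Your proposal is correct and follows essentially the same route as the paper: you differentiate the same composite functional (the paper equivalently tests the $c$-equation with $-n\Delta c$ and the $n$-equation with $2\chi^{-2}n$), obtain the identical cancellations of the cross terms $\int_\Omega cn\,\nabla n\cdot\nabla c$, $\int_\Omega n^2|\nabla c|^2$ and $\frac{2}{\chi}\int_\Omega n\nabla n\cdot\nabla c$, and reach the same intermediate identity, which you then close with the same Young-inequality estimates absorbing $\frac12\int_\Omega n|\Delta c|^2$ into the dissipation. Your remark that every boundary integral vanishes here by the Neumann conditions (so that Lemma \ref{MS} is not needed at this stage) is also consistent with the paper's computation.
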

\begin{proof}
	Multiplying the second equation in \eqref{chemo1} by $-n\Delta c$ and integrating over $\Omega$, we get
\begin{equation}
	-\int_\Omega nc_t\Delta c dx+\int_\Omega n|\Delta c|^2dx- \int_\Omega n^2c\Delta cdx=0.\non
\end{equation}
It follows from integration by parts and the boundary condition \eqref{chemo0} that
\begin{align}
	&-\int_\Omega nc_t\Delta cdx\non\\
    &\quad =\int_\Omega n(\nabla c\cdot\nabla c_t) dx+\int_\Omega (\nabla c\cdot\nabla n) c_t dx\nonumber\\
	&\quad = \frac12 \frac{d}{dt}\int_\Omega n|\nabla c|^2 dx
             -\frac12\int_\Omega |\nabla c|^2[\Delta n-\chi \nabla \cdot(n\nabla c)] dx\non\\
    &\qquad  +\int_\Omega (\nabla c\cdot\nabla n)\Delta cdx  -\int_\Omega nc(\nabla c\cdot\nabla n)dx,\nonumber
\end{align}
and
\begin{align}
	&-\int_\Omega n^2 c\Delta cdx-\int_\Omega nc(\nabla n\cdot \nabla c)dx\nonumber\\
    &\quad = \int_\Omega \nabla (n^2 c)\cdot \nabla  cdx -\int_\Omega nc\nabla n\cdot \nabla cdx\nonumber\\
	&\quad =\int_\Omega nc\nabla n\cdot \nabla cdx+\int_\Omega n^2|\nabla c|^2dx\nonumber\\
    &\quad =\int_\Omega\nabla(nc)\cdot (n\nabla c) dx\non\\
	&\quad =-\int_\Omega nc\nabla\cdot(n\nabla c)dx\nonumber\\
	&\quad =\frac{1}{\chi} \int_\Omega nc(n_t-\Delta n)dx\nonumber\\
	&\quad =\frac{d}{dt}\int_\Omega \frac{n^2c}{2\chi}dx-\int_\Omega\frac{n^2}{2\chi}(\Delta c-nc)dx
            +\frac{1}{\chi} \int_\Omega c|\nabla n|^2dx
            +\frac{1}{\chi} \int_\Omega n\nabla n\cdot\nabla cdx\nonumber\\
	&\quad =\frac{d}{dt}\int_\Omega \frac {n^2c}{2\chi}dx
            +\frac{2}{\chi}\int_\Omega n\nabla n\cdot \nabla c dx
            +\frac{1}{2\chi}\int_\Omega cn^3dx
            +\frac{1}{\chi}\int_\Omega c|\nabla n|^2dx.\non
\end{align}
As a result, we obtain that
\begin{align}
&\frac{d}{dt}\int_\Omega \frac12 n|\nabla c|^2dx
  +\frac{d}{dt}\int_\Omega \frac {n^2c}{2\chi}dx
  +\frac{1}{2\chi}\int_\Omega cn^3dx
  +\frac{1}{\chi} \int_\Omega c|\nabla n|^2dx
  +\int_\Omega n|\Delta c|^2dx\non\\
&\quad = \frac12 \int_\Omega |\nabla c|^2(\Delta n-\chi \nabla \cdot(n\nabla c))dx
        -\int_\Omega \nabla c\cdot\nabla n \Delta cdx
        -\frac{2}{\chi}\int_\Omega n\nabla n\cdot \nabla c dx.
        \label{ccc1}
\end{align}
Next, multiplying the first equation of \eqref{chemo1} by $2n$, integrating over $\Omega$,  we get
\begin{equation}
	\frac{d}{dt}\int_\Omega |n|^2 dx+2\int_\Omega |\nabla n|^2dx = 2\chi \int_\Omega n \nabla n\cdot \nabla c dx. \label{ener2aa}
\end{equation}
Multiplying  \eqref{ener2aa} by $\chi^{-2}$ and adding it up with \eqref{ccc1}, applying Young's inequality, we obtain that
\begin{align}
	&\frac{d}{dt}\int_\Omega \left(\frac12 n|\nabla c|^2 +\frac {n^2c}{2\chi}+\frac{1}{\chi^2} n^2\right)dx
            +\frac{1}{2\chi}\int_\Omega cn^3dx
            +\frac{1}{\chi}\int_\Omega c|\nabla n|^2dx\non\\
    &\qquad\  +\int_\Omega n|\Delta c|^2dx+\frac{2}{\chi^2} \int_\Omega |\nabla n|^2dx\non\\
	&\quad = \frac12 \int_\Omega |\nabla c|^2(\Delta n-\chi\nabla\cdot (n\nabla c))dx-\int_\Omega \nabla c\cdot\nabla n \Delta cdx\nonumber\\
	&\quad = \frac12 \int_\Omega |\nabla c|^2 \Delta ndx
           -\frac{\chi}{2}\int_\Omega \left(n|\nabla c|^2\Delta c+|\nabla c|^2\nabla n\cdot\nabla c\right) dx
           -\int_\Omega \nabla c\cdot\nabla n \Delta cdx \non\\
&\quad \leq \frac{1}{2} \int_\Omega n|\Delta c|^2 dx+\e_0\int_\Omega \frac{|\Delta n|^2}{n}dx+\e_0\int_\Omega\frac{|\nabla n|^4}{n^3}dx+C(\e_0,\chi)\int_\Omega n|\na c|^4 dx,\non
\end{align}
which completes the proof.	
\end{proof}

%%%%%%%%%%%%%%%%%%%%%%%%%%%%%%%%%%%%%%%%%%%%%%%%%%%%%%%%%%%%
\textbf{Third estimate.} We derive some estimate for $\int_\Omega |\Delta c|^2 dx$.
\begin{lemma}
	For any $t\in (0,T_{max})$, the local classical solution $(n,c)$  to problem \eqref{chemo1}--\eqref{assmp} satisfies
\begin{align}
		&\frac{d}{dt}\int_\Omega ( |\Delta c|^2+n|\nabla c|^2) dx+\frac12\int_\Omega |\nabla c_t|^2dx
           +\frac12\int_\Omega |\na\Delta c|^2dx +\frac14\int_\Omega n|\Delta c|^2dx\non\\
		&\quad \leq  (2+\|c_0\|^2_{L^\infty})\int_\Omega  \frac{|\Delta n|^2}{n}dx
                     +\int_\Omega\frac{|\nabla n|^4}{n^3}dx+\|c_0\|^2_{L^\infty}\int_\Omega |\nabla n|^2dx\non\\
        &\qquad  +C_2\int_\Omega n|\nabla c|^4dx,
\label{cccc}
\end{align}
where $C_2>0$ is a constant depending on $\chi$.
\end{lemma}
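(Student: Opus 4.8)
The plan is to differentiate the two contributions $\int_\Omega|\Delta c|^2dx$ and $\int_\Omega n|\nabla c|^2dx$ separately and then add the resulting identities. The reason for grouping them is that the quadratic terms produced by the two computations cancel against one another, leaving on the right-hand side only the quantities appearing in \eqref{cccc}; in particular no forbidden terms such as $\int_\Omega n^3dx$ survive. Throughout I only use the Neumann data for $c$, namely $\frac{\partial c}{\partial\nu}=0$ on $\partial\Omega$ together with its time-derivative $\frac{\partial c_t}{\partial\nu}=0$.

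For the term $\int_\Omega|\Delta c|^2dx$, differentiating in time and integrating by parts directly would generate the boundary integral $\int_{\partial\Omega}\Delta c\,\frac{\partial\Delta c}{\partial\nu}dS$, which has no sign and cannot be controlled since $\Omega$ need not be convex and $\frac{\partial\Delta c}{\partial\nu}\neq0$ in general. I would avoid it entirely by testing the second equation of \eqref{chemo1} (which under \eqref{assmp} reads $c_t=\Delta c-nc$) against $-\Delta c_t$. Using $\frac{\partial c_t}{\partial\nu}=0$, this yields after integration by parts
\[
\tfrac12\tfrac{d}{dt}\int_\Omega|\Delta c|^2dx+\int_\Omega|\nabla c_t|^2dx=-\int_\Omega\nabla(nc)\cdot\nabla c_tdx .
\]
Since $\nabla c_t=\nabla\Delta c-\nabla(nc)$ is a purely algebraic identity (the gradient of $c_t=\Delta c-nc$), the same balance can be rewritten as $\tfrac12\tfrac{d}{dt}\int_\Omega|\Delta c|^2dx+\int_\Omega|\nabla\Delta c|^2dx=\int_\Omega\nabla(nc)\cdot\nabla\Delta cdx$, and adding the two identities, using once more $\nabla\Delta c-\nabla c_t=\nabla(nc)$, produces
\[
\tfrac{d}{dt}\int_\Omega|\Delta c|^2dx+\int_\Omega|\nabla c_t|^2dx+\int_\Omega|\nabla\Delta c|^2dx=\int_\Omega|\nabla(nc)|^2dx .
\]
This furnishes both dissipation terms with no boundary contribution.

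For the term $\int_\Omega n|\nabla c|^2dx$, I would write $\tfrac{d}{dt}\int_\Omega n|\nabla c|^2dx=\int_\Omega n_t|\nabla c|^2dx+2\int_\Omega n\nabla c\cdot\nabla c_tdx$, substitute $n_t=\Delta n-\chi\nabla\cdot(n\nabla c)$ and $c_t=\Delta c-nc$, and integrate by parts (only $\frac{\partial c}{\partial\nu}=0$ is needed, all boundary terms vanishing). This gives
\[
\tfrac{d}{dt}\int_\Omega n|\nabla c|^2dx+2\int_\Omega n|\Delta c|^2dx+2\int_\Omega n^2|\nabla c|^2dx=\int_\Omega\Delta n|\nabla c|^2dx-\chi\int_\Omega\nabla\cdot(n\nabla c)|\nabla c|^2dx-2\int_\Omega(\nabla n\cdot\nabla c)\Delta cdx-2\int_\Omega nc\,\nabla n\cdot\nabla cdx ,
\]
producing in particular the good dissipation $\int_\Omega n|\Delta c|^2dx$.

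Adding the two identities is the crux, and is where I expect the main difficulty to lie. Expanding $\int_\Omega|\nabla(nc)|^2dx=\int_\Omega c^2|\nabla n|^2dx+2\int_\Omega cn\,\nabla n\cdot\nabla cdx+\int_\Omega n^2|\nabla c|^2dx$, the mixed term $2\int_\Omega cn\,\nabla n\cdot\nabla cdx$ cancels exactly against $-2\int_\Omega nc\,\nabla n\cdot\nabla cdx$, while $\int_\Omega n^2|\nabla c|^2dx$ is dominated by the $2\int_\Omega n^2|\nabla c|^2dx$ already on the left; this is precisely the mechanism that prevents the appearance of an $\int_\Omega n^3dx$ term, which a cruder estimate of $\int_\Omega n^2|\nabla c|^2dx$ would inevitably create. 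What remains on the right is $\int_\Omega c^2|\nabla n|^2dx+\int_\Omega\Delta n|\nabla c|^2dx-\chi\int_\Omega\nabla\cdot(n\nabla c)|\nabla c|^2dx-2\int_\Omega(\nabla n\cdot\nabla c)\Delta cdx$, which I would bound by Young's inequality together with the pointwise bound $c\leq\|c_0\|_{L^\infty(\Omega)}$ from Proposition \ref{lc}: the first term yields $\|c_0\|^2_{L^\infty}\int_\Omega|\nabla n|^2dx$, the splittings $\Delta n|\nabla c|^2=\tfrac{\Delta n}{\sqrt n}\,\sqrt n|\nabla c|^2$ and $|\nabla n||\nabla c|^3=\tfrac{|\nabla n|}{n^{3/4}}\,n^{3/4}|\nabla c|^3$ turn the diffusion and transport terms into controlled multiples of $\int_\Omega\frac{|\Delta n|^2}{n}dx$, $\int_\Omega\frac{|\nabla n|^4}{n^3}dx$ and $\int_\Omega n|\nabla c|^4dx$, and every term carrying a factor $\Delta c$ is absorbed into a small fraction of $\int_\Omega n|\Delta c|^2dx$. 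Collecting the constants gives \eqref{cccc}. Thus the genuine obstacles are the two bookkeeping steps just described — extracting both $|\nabla c_t|^2$ and $|\nabla\Delta c|^2$ without the uncontrolled boundary integral, and arranging the exact cancellation of the quadratic terms so that only admissible quantities remain; the Young's-inequality estimates that follow are routine.
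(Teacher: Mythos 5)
Your proposal is correct: I checked that your intermediate balance
\begin{equation*}
\frac{d}{dt}\int_\Omega|\Delta c|^2dx+\int_\Omega|\nabla c_t|^2dx+\int_\Omega|\nabla\Delta c|^2dx=\int_\Omega|\nabla(nc)|^2dx
\end{equation*}
and your identity for $\frac{d}{dt}\int_\Omega n|\nabla c|^2dx$ are both exact, the cross terms $\pm2\int_\Omega nc\,\nabla n\cdot\nabla c\,dx$ do cancel, and the final Young estimates yield \eqref{cccc} with dissipation coefficients at least as large as required. The skeleton coincides with the paper's proof --- both arguments rest on the multiplier $-\Delta c_t$ and on pairing the gradient of the $c$-equation with $\nabla\Delta c$ (your substitution $\nabla c_t=\nabla\Delta c-\nabla(nc)$ is exactly equivalent to that testing), followed by Young's inequality and $c\leq\|c_0\|_{L^\infty(\Omega)}$ --- but the bookkeeping is genuinely different. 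In the paper, the single $-\Delta c_t$ test produces $\frac{d}{dt}\int_\Omega\left(|\Delta c|^2+n|\nabla c|^2\right)dx$ all at once, because the $n|\nabla c|^2$ part emerges from $-\int_\Omega nc\,\Delta c_t\,dx$ after integration by parts and use of the $n$-equation; the resulting coupling term $-\int_\Omega c\nabla n\cdot\nabla c_t\,dx$ is then killed immediately by Cauchy--Schwarz at the price of half of $\int_\Omega|\nabla c_t|^2dx$, and as a consequence the quantity $\int_\Omega n^2|\nabla c|^2dx$ is never created, so no cancellation is needed. In your version the two pieces of the functional are differentiated separately, so the quadratic terms do appear, and you must (and correctly do) dispose of them via the exact cancellation and via absorption into the dissipation $2\int_\Omega n^2|\nabla c|^2dx$ that your computation supplies. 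What the paper's arrangement buys is economy, since the cancellation is built in; what yours buys is exact identities kept until the last step, hence the full coefficients on $\int_\Omega|\nabla c_t|^2dx$, $\int_\Omega|\nabla\Delta c|^2dx$ and essentially $2\int_\Omega n|\Delta c|^2dx$ (versus $\frac12$, $\frac12$, $\frac14$ in \eqref{cccc}), together with the minor bonus that the Neumann condition on $n$ is never used, whereas the paper needs $\frac{\partial n}{\partial\nu}=0$ to discard the boundary term in $\int_\Omega\nabla(nc)\cdot\nabla\Delta c\,dx=-\int_\Omega\Delta(nc)\,\Delta c\,dx$.
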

\begin{proof}
Multiplying the second equation of \eqref{chemo1} by $-\Delta c_t$ and integrating over $\Omega$, after integration by parts, we obtain that
\begin{align}
	&\frac12\frac{d}{dt}\int_\Omega \left( |\Delta c|^2+n|\nabla c|^2\right) dx+\int_\Omega |\nabla c_t|^2dx \non\\
	&\quad = \frac12 \int_\Omega |\nabla c|^2(\Delta n-\chi \nabla \cdot (n\na c))dx
            -\int_\Omega c\nabla n\cdot \nabla c_t dx\nonumber\\
&\quad \leq \frac12\int_\Omega |\nabla c|^2(\Delta n-\chi \nabla \cdot (n\na c))dx
            +\frac12\int_\Omega |\nabla c_t|^2dx + \frac12 \int_\Omega c^2|\nabla n|^2dx \nonumber\\
&\quad = \frac12 \int_\Omega |\nabla c|^2 \Delta ndx
        -\frac{\chi}{2}\int_\Omega \left(n|\nabla c|^2\Delta c+|\nabla c|^2\nabla n\cdot\nabla c\right) dx\non\\
&\qquad
        +\frac12\int_\Omega |\nabla c_t|^2dx +\frac12 \|c\|^2_{L^\infty(\Omega)}\int_\Omega |\nabla n|^2dx\non\\
&\quad \leq \frac18 \int_\Omega n|\Delta c|^2 dx+\int_\Omega \frac{|\Delta n|^2}{n}dx+\frac{1}{4}\int_\Omega\frac{|\nabla n|^4}{n^3}dx+C\int_\Omega n|\na c|^4 dx\non\\
&\qquad +\frac12\int_\Omega|\nabla c_t|^2dx+\frac12 \|c_0\|^2_{L^\infty(\Omega)}\int_\Omega |\nabla n|^2dx,\label{ener0c}
\end{align}
where $C>0$ depends on $\chi$.

On the other hand, by taking gradient of the equation for $c$ in \eqref{chemo1}, multiplying the resultant by $\nabla\Delta c$ and integrating over $\Omega$, we deduce that
\begin{align}
	\int_\Omega |\nabla\Delta c|^2dx
    &\quad =\int_\Omega \nabla (nc)\cdot\nabla \Delta c dx+\int_\Omega\nabla c_t\cdot\nabla\Delta cdx\nonumber\\
	&\quad =-\int_\Omega \Delta (nc) \Delta c dx+\int_\Omega\nabla c_t\cdot\nabla\Delta cdx\nonumber\\
	&\quad =-\int_\Omega c\Delta n\Delta cdx-\int_\Omega n|\Delta c|^2dx-2\int_\Omega (\nabla n\cdot\nabla c) \Delta cdx\non\\
    &\qquad +\int_\Omega\nabla c_t\cdot\nabla\Delta cdx.\nonumber
	\end{align}
Hence, it holds
\begin{align}
		&\int_\Omega |\nabla\Delta c|^2dx +\int_\Omega n|\Delta c|^2dx\non\\
		&\quad =-\int_\Omega c\Delta n\Delta cdx-2\int_\Omega \nabla n\cdot\nabla c\Delta cdx+\int_\Omega\nabla c_t\cdot\nabla\Delta cdx\nonumber\\
		&\quad \leq \frac12\left(\int_\Omega n|\Delta c|^2dx
                    +\int_\Omega |\nabla\Delta c|^2dx \right)
                    +\frac12 \int_\Omega |\nabla c_t|^2dx
                    + \|c\|^2_{L^\infty(\Omega)}\int_\Omega \frac{|\Delta n|^2}{n}dx\non\\
		&\qquad +\frac12\int_\Omega\frac{|\nabla n|^4}{n^3}dx
                +C\int_\Omega n|\nabla c|^4dx,\non
	\end{align}
which together with \eqref{ener0c} yields that
\begin{align}
		&\frac12 \frac{d}{dt}\int_\Omega ( |\Delta c|^2+n|\nabla c|^2) dx
                   +\frac14\int_\Omega |\nabla c_t|^2dx +\frac14\int_\Omega |\na\Delta c|^2dx +\frac18\int_\Omega n|\Delta c|^2dx\non\\
		&\quad \leq  (1+\frac12\|c_0\|^2_{L^\infty})\int_\Omega  \frac{|\Delta n|^2}{n}dx+\frac12 \int_\Omega\frac{|\nabla n|^4}{n^3}dx
                     +\frac12\|c_0\|^2_{L^\infty}\int_\Omega |\nabla n|^2dx \non\\
        &\qquad +C\int_\Omega n|\nabla c|^4dx. \label{ener20}
\end{align}
The proof is complete.
\end{proof}

%%%%%%%%%%%%%%%%%%%%%%%%%%%%%%%%%%%%%%%%%%%%%%%%%%%%%%%%%%%%%%%%%%

\textbf{Fourth estimate.} Combining the above estimates, we deduce the following differential inequality:

\begin{lemma}\label{lm3.8}
Let $(n,c)$ be the local classical solution to problem \eqref{chemo1}--\eqref{assmp}.
There exist constants $\kappa_1,\kappa_2>0$ such that for any $t\in(0,T_{max})$, it holds
\begin{align}
&\frac{d}{dt}\int\left[\frac n2|\na\log n|^2+\frac{\kappa_1}{2\chi}n^2c+ \frac{\kappa_1}{\chi^2}n^2+ \big(\frac{\kappa_1}{2}+\kappa_2\big)n|\na c|^2
         +\kappa_2 |\Delta c|^2\right]dx\non\\
&\qquad +\frac18\int_\Omega n |\nabla^2\log n|^2dx + \frac{\kappa_1}{\chi^2}\int_\Omega |\na n|^2dx
         +\frac{\kappa_1}{2\chi}\int_\Omega cn^3dx+\frac{\kappa_1}{\chi}\int_\Omega c|\nabla n|^2dx\non\\
&\qquad +\frac{\k_2}{2}\int_\Omega|\na c_t|^2dx+\frac{\k_2}{2}\int_\Omega |\nabla\Delta c|^2dx
        +\frac14(\kappa_1+\k_2)\int_\Omega n|\Delta c|^2dx\non\\
&\quad \leq C_3\int_\Omega n|\nabla c|^4dx +C_0\|n\|_{L^1(\Omega)},
       \label{energy00}
\end{align}
where $C_3>0$ depends on $\Omega$ and $\chi$.
\end{lemma}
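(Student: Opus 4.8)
The plan is to add the three differential inequalities \eqref{ener1}, \eqref{ccc0} and \eqref{cccc} with weights $1$, $\kappa_1$ and $\kappa_2$ respectively, and to choose the positive constants $\kappa_1\gg\kappa_2>0$ so that each term on the resulting right-hand side is either kept (the $\int_\Omega n|\nabla c|^4$ and $\|n\|_{L^1(\Omega)}$ contributions) or absorbed into the dissipation on the left. A direct inspection shows that the $\frac{d}{dt}$-integrands combine \emph{exactly} into the energy functional of \eqref{energy00} (the coefficient of $n|\nabla c|^2$ being $\tfrac{\kappa_1}{2}+\kappa_2$), while the left-hand dissipation of \eqref{energy00} is obtained by keeping a definite fraction of each dissipation term as ``slack'': namely $\tfrac38\int_\Omega n|\nabla^2\log n|^2$ out of the $\tfrac12$ in \eqref{ener1}, $\tfrac{\kappa_1}{\chi^2}\int_\Omega|\nabla n|^2$ out of $\tfrac{2\kappa_1}{\chi^2}$ in \eqref{ccc0}, and $\tfrac{\kappa_1}{4}\int_\Omega n|\Delta c|^2$ out of the $\tfrac{\kappa_1}{2}\int_\Omega n|\Delta c|^2$ in \eqref{ccc0}; the remaining good terms ($cn^3$, $c|\nabla n|^2$, $|\nabla c_t|^2$, $|\nabla\Delta c|^2$) are carried over unchanged.

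Next I would dispose of the ``standard'' bad terms. The quantities $\int_\Omega\frac{|\nabla n|^4}{n^3}$ from $\kappa_1\cdot$\eqref{ccc0} and $\kappa_2\cdot$\eqref{cccc} are controlled by $\int_\Omega n|\nabla^2\log n|^2$ through \eqref{n3}. The terms $\int_\Omega\frac{|\Delta n|^2}{n}$ are reduced to the same quantity via the pointwise identity $\frac{\Delta n}{n}=\Delta\log n+\frac{|\nabla n|^2}{n^2}$, which gives $\frac{|\Delta n|^2}{n}\le 2n|\Delta\log n|^2+2\frac{|\nabla n|^4}{n^3}$, followed by \eqref{ptw} and \eqref{n3}; thus $\int_\Omega\frac{|\Delta n|^2}{n}\le C\int_\Omega n|\nabla^2\log n|^2$. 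The zeroth-order term $\kappa_2\|c_0\|_{L^\infty(\Omega)}^2\int_\Omega|\nabla n|^2$ from \eqref{cccc} is absorbed into the slack $\frac{\kappa_1}{\chi^2}\int_\Omega|\nabla n|^2$ as soon as $\kappa_1\ge\kappa_2\chi^2\|c_0\|_{L^\infty(\Omega)}^2$. The cross term $\chi\int_\Omega\Delta n\Delta c$ of \eqref{ener1} is split by Young's inequality into $\lambda\int_\Omega\frac{|\Delta n|^2}{n}$ (routed to the $\nabla^2\log n$ budget) and $\frac{\chi^2}{4\lambda}\int_\Omega n|\Delta c|^2$ (routed to the $\tfrac{\kappa_1}{4}$ slack).

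The genuine difficulty is the remaining cross term $I:=-\chi\int_\Omega(\nabla n\otimes\nabla\log n):\nabla^2 c$. Since $\nabla\log n=\nabla n/n$ one has $|I|\le\chi\int_\Omega\frac{|\nabla n|^2}{n}|\nabla^2 c|$, and Young's inequality produces the \emph{weighted full Hessian} $\int_\Omega n|\nabla^2 c|^2$, which is not among the available dissipation terms (we only control $\int_\Omega n|\Delta c|^2$ and $\int_\Omega|\nabla\Delta c|^2$). The key step is therefore a weighted second-order estimate comparing $\int_\Omega n|\nabla^2 c|^2$ with $\int_\Omega n|\Delta c|^2$. Integrating by parts twice, the third-order contributions $\int_\Omega n\,\nabla\Delta c\cdot\nabla c$ cancel and one is left with
\begin{align}
\int_\Omega n|\nabla^2 c|^2\,dx
&= \int_\Omega n|\Delta c|^2\,dx
 -\int_\Omega (\nabla^2 c):(\nabla n\otimes\nabla c)\,dx
 +\int_\Omega \Delta c\,(\nabla n\cdot\nabla c)\,dx \nonumber\\
&\quad +\frac12\int_{\partial\Omega} n\,\frac{\partial|\nabla c|^2}{\partial\nu}\,dS,\nonumber
\end{align}
where the Neumann condition $\partial c/\partial\nu=0$ kills the boundary term arising from $\int_\Omega n|\Delta c|^2$, and the identity $(\nabla^2 c\,\nu)\cdot\nabla c=\tfrac12\frac{\partial|\nabla c|^2}{\partial\nu}$ is used for the one arising from $\int_\Omega n|\nabla^2 c|^2$. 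The two interior remainders are handled by Young's inequality: a fraction $\tfrac14\int_\Omega n|\nabla^2 c|^2$ is re-absorbed on the left, and the rest is bounded by $\delta\int_\Omega\frac{|\nabla n|^4}{n^3}+C_\delta\int_\Omega n|\nabla c|^4$, the latter being a kept term so $C_\delta$ may be taken as large as needed. Exactly as in the derivation of \eqref{ener1} (and as in \cite[Lemma 2.4]{JWZ}), the boundary integral is estimated through Lemma \ref{MS} by $\kappa\int_{\partial\Omega}n|\nabla c|^2\,dS$ and then, via the trace theorem and interpolation, by arbitrarily small multiples of the interior dissipation plus a constant times $\|n\|_{L^1(\Omega)}$; this is precisely where the non-convexity of $\Omega$ is accommodated. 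Substituting this bound back into the estimate for $I$ yields only contributions to $\int_\Omega n|\Delta c|^2$, to $\int_\Omega\frac{|\nabla n|^4}{n^3}$, and to the kept $\int_\Omega n|\nabla c|^4$.

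Finally I would fix the constants in the right order, which is the delicate point. Choose the Young parameter in $I$ first (this sets the $\int_\Omega n|\Delta c|^2$ coefficient produced by $I$), then take $\kappa_1$ large enough that this coefficient together with $\frac{\chi^2}{4\lambda}$ from $\chi\int_\Omega\Delta n\Delta c$ stays below $\tfrac{\kappa_1}{4}$; next choose $\delta,\lambda$ and $\e_0$ small so that the total coefficient of $\int_\Omega\frac{|\nabla n|^4}{n^3}$ (hence, after \eqref{n3}, of $\int_\Omega n|\nabla^2\log n|^2$) stays below $\tfrac38$, dumping the resulting large constants onto $\int_\Omega n|\nabla c|^4$; and finally pick $\kappa_2$ small relative to $\kappa_1$ to respect $\kappa_1\ge\kappa_2\chi^2\|c_0\|_{L^\infty(\Omega)}^2$. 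With these choices every bad term is absorbed and \eqref{energy00} follows, with $C_3$ collecting the harmless large constants attached to $\int_\Omega n|\nabla c|^4$. I expect the main obstacle to be exactly the weighted Hessian $\int_\Omega n|\nabla^2 c|^2$ and the boundary integral it generates, which must be controlled without any convexity hypothesis on $\Omega$.
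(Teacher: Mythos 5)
Your proposal reproduces the paper's overall architecture faithfully: the weighted sum of \eqref{ener1}, $\kappa_1\cdot$\eqref{ccc0}, $\kappa_2\cdot$\eqref{cccc}, the conversion of every $\int_\Omega\frac{|\Delta n|^2}{n}dx$ and $\int_\Omega\frac{|\nabla n|^4}{n^3}dx$ term into $\int_\Omega n|\nabla^2\log n|^2dx$ via \eqref{n4}, \eqref{ptw} and \eqref{n3}, and your accounting of the slack fractions ($\tfrac38$, $\tfrac{\kappa_1}{\chi^2}$, $\tfrac{\kappa_1}{4}$) are all exactly what the paper does. Where you genuinely diverge is the one step you correctly identify as the crux, the cross term $I=-\chi\int_\Omega(\nabla n\otimes\nabla\log n):\nabla^2 c\,dx$. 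The paper never encounters the weighted Hessian $\int_\Omega n|\nabla^2 c|^2dx$ at all: it integrates $I$ by parts once, moving the derivative off $\nabla^2 c$; the resulting boundary term carries a factor $\frac{\partial n}{\partial\nu}=0$ and vanishes, leaving
\begin{align}
-\chi\int_\Omega(\nabla n\otimes\nabla\log n):\nabla^2 c\,dx
&=\chi\int_\Omega \Delta n\,(\nabla\log n\cdot\nabla c)\,dx
+\chi\int_\Omega \sum_{i,j=1}^3\nabla_i n\,(\nabla_i\nabla_j\log n)\,\nabla_j c\,dx,\nonumber
\end{align}
after which Young's inequality lands directly on the available quantities $\int_\Omega\frac{|\Delta n|^2}{n}dx$, $\int_\Omega\frac{|\nabla n|^4}{n^3}dx$, $\int_\Omega n|\nabla^2\log n|^2dx$ and the kept $\int_\Omega n|\nabla c|^4dx$ --- no Hessian of $c$, no new boundary integral. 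Your route instead estimates $I$ brutally, generates $\int_\Omega n|\nabla^2 c|^2dx$, and restores control through a weighted Bochner-type identity comparing it with $\int_\Omega n|\Delta c|^2dx$, at the price of a second boundary term $\frac12\int_{\partial\Omega}n\,\partial_\nu|\nabla c|^2\,dS$ handled by Lemma \ref{MS} and the trace theorem. I checked your identity --- both boundary manipulations are right ($\nabla c\cdot\nu=0$ kills one term, and $(\nabla^2 c\,\nu)\cdot\nabla c=\tfrac12\partial_\nu|\nabla c|^2$ identifies the other) --- and your absorption scheme closes, so the proof is correct. It is simply longer: the paper's single integration by parts (exploiting the Neumann condition on $n$ rather than on $c$) makes the entire weighted-Hessian apparatus unnecessary, and confines all boundary difficulty to the one term already treated inside \eqref{ener1}. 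What your detour buys is the inequality $\int_\Omega n|\nabla^2 c|^2dx\lesssim\int_\Omega n|\Delta c|^2dx+\cdots$ on non-convex domains, which is of independent interest but is not needed here.

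Two small repairs. Your order of fixing constants is circular as written: you choose $\kappa_1$ using ``$\frac{\chi^2}{4\lambda}$'' before $\lambda$ is chosen. The constraints themselves are not circular --- the smallness requirement on $\lambda$ involves only the $n|\nabla^2\log n|^2$ budget, not $\kappa_1$ --- so the correct order is: the Young parameter in $I$ large, then $\lambda$, $\kappa_2$ and the weighted-identity parameters small, then $\kappa_1$ large, and finally $\e_0$ small (it multiplies $\kappa_1$); this mirrors the paper's order ($\e_1,\kappa_2$ first, then $\kappa_1$, then $\e_0$). Also, your boundary/trace estimate necessarily produces a kept contribution $C\int_\Omega n|\nabla c|^4dx+C\|n\|_{L^1(\Omega)}$, not only $\|n\|_{L^1(\Omega)}$, so the constant in front of $\|n\|_{L^1(\Omega)}$ in \eqref{energy00} will in your version depend on $\chi$ as well as $\Omega$; this is harmless for every later use of the lemma.
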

\begin{proof}
Multiplying \eqref{ccc0} by $\kappa_1$, \eqref{cccc} by $\kappa_2$, respectively, with $\kappa_1,\kappa_2>0$ to be specified later, then adding the resultants to \eqref{ener1}, we obtain that
\begin{align}
&\frac{d}{dt}\int_\Omega \left[\frac{n}{2}|\na\log n|^2
                                +\frac{\kappa_1}{2\chi}n^2c+\frac{\kappa_1}{\chi^2}n^2
                                + \big(\frac{\kappa_1}{2}+\kappa_2\big) n|\na c|^2
                                +\kappa_2 |\Delta c|^2\right]dx\non\\
&\qquad +\frac12\int_\Omega n |\nabla^2\log n|^2dx
        +\frac{2\kappa_1}{\chi^2}\int_\Omega |\na n |^2dx
        +\frac{\kappa_1}{2\chi}\int_\Omega cn^3dx
        +\frac{\kappa_1}{\chi}\int_\Omega c|\nabla n|^2dx\non\\
&\qquad +\frac{\k_2}{2}\int_\Omega |\na c_t|^2dx
        +\frac{\k_2}{2}\int_\Omega |\nabla\Delta c|^2dx
        +\big(\frac{\kappa_1}{2}+\frac{\k_2}{4}\big)\int_\Omega n|\Delta c|^2dx\non\\
&\quad \leq -\int_\Omega(\nabla n\otimes\na\log n): \nabla^2 c dx
            +\chi \int_\Omega\Delta n\Delta cdx +C_0\|n\|_{L^1(\Omega)}\non\\
&\qquad +\Big[\e_0\kappa_1+(2+\|c_0\|_{L^\infty(\Omega)}^2)\k_2\Big]\int_\Omega  \frac{|\Delta n|^2}{n}dx
        +(\e_0\kappa_1+\kappa_2)\int_\Omega\frac{|\nabla n|^4}{n^3}dx\non\\
&\qquad +(C_1\k_1+C_2\k_2)\int_\Omega n|\nabla c|^4dx
        +\k_2\|c_0\|_{L^\infty(\Omega)}^2\int_\Omega |\nabla n|^2dx.
        \label{ener3}
\end{align}
Integrating by parts and applying Young's inequality, we  deduce that
\begin{align}
	&-\int_\Omega(\nabla n\otimes\na\log n):\nabla^2 cdx\non\\
    &\quad =\int_\Omega \sum_{j=1}^3 \Delta n\nabla_j\log n\nabla_j cdx+\int_\Omega \sum_{i,j=1}^3 \nabla_i n(\nabla_j\nabla_{i}\log n)\nabla_j cdx\non\\
	&\quad \leq \e_0\int_\Omega\frac{|\Delta n|^2}{n}dx+\frac{\e_0}{2}\int_\Omega\frac{|\nabla n|^4}{n^3}dx+\frac{C}{\e_0^3}\int_\Omega n|\nabla c|^4dx\nonumber\\
	&\qquad +\frac{\e_0}2\int_\Omega\frac{|\nabla n|^4}{n^3}dx+\frac18\int_\Omega n|\nabla^2\log n|^2dx+\frac{C}{\e_0}\int_\Omega n|\nabla c|^4dx\nonumber\\
	&\quad \leq\e_0\int_\Omega\frac{|\Delta n|^2}{n}dx+ \e_0\int_\Omega\frac{|\nabla n|^4}{n^3}dx+\frac18\int_\Omega n|\nabla^2\log n|^2dx+C(\e_0)\int_\Omega n|\nabla c|^4dx,\non
\end{align}
where $\e_0>0$ takes the same value as in \eqref{ener3} and
\begin{equation}
	\chi\Bigg|\int_\Omega\Delta n\Delta cdx\Bigg|\leq\e_1\int_\Omega \frac{|\Delta n|^2}{n}+\frac{\chi^2}{4\e_1}\int_\Omega n|\Delta c|^2dx,\non
\end{equation}
for some $\e_1>0$ to be determined below.

On the other hand, we deduce from the following identity
\begin{equation}
	\Delta \log n=\frac{\Delta n}{n}-\frac{|\nabla n|^2}{n^2}\non
\end{equation}
together with the Cahchy--Schwarz inequality and \eqref{n3} that
\begin{align}
	\int_\Omega \frac{|\Delta n|^2}{n}dx &\quad =\int_\Omega n\left(\Delta\log n+\frac{|\nabla n|^2}{n}\right)^2 dx\non\\
	&\quad \leq 2\int_\Omega n|\Delta\log n|^2dx+2\int_\Omega \frac{|\nabla n|^4}{n^3}dx\non\\
	&\quad \leq 6\int_\Omega n|\nabla^2\log n|^2dx+2\int_\Omega \frac{|\nabla n|^4}{n^3}dx\non\\
    &\quad \leq \left[6+2(2+\sqrt{3})^2\right]\int_\Omega n|\nabla^2\log n|^2dx,\label{n4}
\end{align}	
where we have also used the point-wise inequality $|\Delta \log n|^2\leq 3|\nabla ^2\log n|^2$.

Recalling the estimates \eqref{n3} and \eqref{n4}, we first take the positive constants $\e_1,\kappa_2$ small enough, for instance,
\begin{align}
&\left[6+2(2+\sqrt{3})^2\right]\e_1=\frac{1}{24},\non\\
&(2+\|c_0\|_{L^\infty}^2)\left[6+2(2+\sqrt{3})^2\right]\k_2\leq \frac{1}{24},\non\\
&(2+\sqrt{3})^2\kappa_2\leq \frac{1}{24},\non
\end{align}
such that the following inequality holds
\begin{align}
	\left[\e_1+(2+\|c_0\|_{L^\infty}^2)\k_2\right]\int_\Omega \frac{|\Delta n|^2}{n}dx+\kappa_2\int_\Omega\frac{|\nabla n|^4}{n^3}dx \leq \frac18 \int_\Omega n |\nabla^2\log n|^2dx.\non
\end{align}
In particular, we note that $\k_2\|c_0\|_{L^\infty}^2$ is bounded by a small constant $C_4$ that is independent of $\|c_0\|_{L^\infty}$.
After fixing $\e_1, \kappa_2$, we choose $\kappa_1>0$ large enough such that
\begin{equation}
	\frac{\kappa_1}{4}>\frac{\chi^2}{4\e_1}\quad\text{and}\quad \frac{\kappa_1}{\chi^2}> C_4\geq \kappa_2\|c_0\|^2_{L^\infty},\non
\end{equation}
We note that $\kappa_1$ may depend on $\Omega$ and $\chi$, but is independent of $\|c_0\|_{L^\infty(\Omega)}$.
Finally, keeping \eqref{n3} and \eqref{n4} in mind, we choose $\e_0$ sufficiently small such that
\begin{equation}
	(\e_0\kappa_1+\e_0)\int_\Omega  \frac{|\Delta n|^2}{n}dx+(\e_0\kappa_1+\e_0)\int_\Omega\frac{|\nabla n|^4}{n^3}dx\leq \frac18 \int_\Omega n |\nabla^2\log n|^2dx.\non
\end{equation}
Again, $\e_0$ may depend on $\Omega$ and $\chi$, but it is independent of $\|c_0\|_{L^\infty(\Omega)}$.
Combing the above estimates together, we conclude \eqref{energy00}.

The proof is complete.
\end{proof}

%%%%%%%%%%%%%%%%%%%%%%%%%%%%%%%%%%%%%%%%%%%%%%%%%%%%%%%%%%%%%%%%%%%%%%%%%%%%

\textbf{Fifth estimate.} In order to treat the reminder term $\int_\Omega n|\nabla c|^4dx$ on the right-hand side of \eqref{energy00} and close the estimate,
we shall derive the following differential inequality for $\|\nabla c\|_{L^4(\Omega)}^4$:

\begin{lemma}\label{lm3.9}
For any $t\in[0,T_{max})$, the following inequality holds
\begin{align}
& \frac{d}{dt}\int_\Omega |\nabla c|^4 dx
  +\int_\Omega\big|\nabla|\nabla c|^2\big|^2dx
  +4\int_\Omega |\nabla^2c|^2|\nabla c|^2 dx
  +4\int_\Omega n |\nabla c|^4 dx\non\\
&\quad \leq \delta_0\int_\Omega\frac{|\nabla n|^4}{n^3} dx + C_5\int_\Omega |\nabla c|^4dx
        +3\left(\delta_0^{-1}\|c_0\|^4_{L^\infty(\Omega)}\right)^\frac13 \int_\Omega n|\nabla c|^4dx,
        \label{c4}
\end{align}
where $\delta_0$ is an arbitrary positive constant and $C_5>0$ depends on $\Omega$.
\end{lemma}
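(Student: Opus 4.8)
The plan is to differentiate $\int_\Omega|\nabla c|^4\,dx$ in time, substitute the equation $c_t=\Delta c-nc$ (recall $f(c)=c$), and split the resulting expression into a ``diffusion part'' coming from $\nabla\Delta c$ and a ``reaction part'' coming from $-\nabla(nc)$. Writing $w:=|\nabla c|^2$, one has
\[
\frac{d}{dt}\int_\Omega|\nabla c|^4\,dx
=4\int_\Omega|\nabla c|^2\,\nabla c\cdot\nabla c_t\,dx
=4\int_\Omega|\nabla c|^2\,\nabla c\cdot\nabla\Delta c\,dx
-4\int_\Omega|\nabla c|^2\,\nabla c\cdot\nabla(nc)\,dx .
\]
For the diffusion part I would invoke the pointwise Bochner identity $\nabla c\cdot\nabla\Delta c=\tfrac12\Delta|\nabla c|^2-|\nabla^2c|^2$, which turns the first integral into $2\int_\Omega w\,\Delta w\,dx-4\int_\Omega|\nabla^2c|^2\,|\nabla c|^2\,dx$. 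Integrating $2\int_\Omega w\,\Delta w\,dx$ by parts produces the dissipation term $-2\int_\Omega|\nabla w|^2\,dx$ together with a boundary contribution $2\int_{\partial\Omega}w\,\frac{\partial w}{\partial\nu}\,dS$. Since $\frac{\partial c}{\partial\nu}=0$ on $\partial\Omega$, Lemma \ref{MS} applies to $c$ and gives $\frac{\partial w}{\partial\nu}\le 2\kappa w$ on $\partial\Omega$, so this boundary term is controlled by $4\kappa\int_{\partial\Omega}|\nabla c|^4\,dS$.

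For the reaction part, expanding $\nabla(nc)=c\,\nabla n+n\,\nabla c$ yields the good dissipative term $-4\int_\Omega n|\nabla c|^4\,dx$ and a cross term $-4\int_\Omega c\,|\nabla c|^2\,\nabla c\cdot\nabla n\,dx$. Using the a priori bound $0<c\le\|c_0\|_{L^\infty(\Omega)}$ from Proposition \ref{lc}, bounding $|c|$ by $\|c_0\|_{L^\infty(\Omega)}$, and writing the integrand as $\big(|\nabla n|/n^{3/4}\big)\cdot\big(n^{1/4}|\nabla c|\big)^3$, a weighted Young inequality with exponents $4$ and $4/3$ distributes this term precisely into $\delta_0\int_\Omega\frac{|\nabla n|^4}{n^3}\,dx$ and $3\big(\delta_0^{-1}\|c_0\|^4_{L^\infty(\Omega)}\big)^{1/3}\int_\Omega n|\nabla c|^4\,dx$; choosing the Young weight $\lambda=(\delta_0/\|c_0\|_{L^\infty(\Omega)})^{1/4}$ reproduces exactly the stated constants.

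The main obstacle is the boundary term $4\kappa\int_{\partial\Omega}|\nabla c|^4\,dS=4\kappa\|w\|_{L^2(\partial\Omega)}^2$, which must be absorbed without destroying the interior dissipation. I would handle it exactly as in the treatment of the analogous boundary integral for $n$ in the first estimate (cf. \cite[Lemma 2.4]{JWZ}): by the trace theorem together with an interpolation inequality of the form $\|w\|_{L^2(\partial\Omega)}^2\le\varepsilon\|\nabla w\|_{L^2(\Omega)}^2+C_\varepsilon\|w\|_{L^2(\Omega)}^2$, valid for $w\in H^1(\Omega)$ on the smooth bounded domain $\Omega\subset\mathbb{R}^3$. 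Choosing $\varepsilon>0$ small enough that $4\kappa\varepsilon\le 1$ lets the resulting $\int_\Omega|\nabla w|^2\,dx$ be absorbed by the $-2\int_\Omega|\nabla w|^2\,dx$ produced above, leaving a net coefficient $1$ in front of $\int_\Omega\big|\nabla|\nabla c|^2\big|^2\,dx$, while the lower-order remainder $C_\varepsilon\|w\|_{L^2(\Omega)}^2=C_\varepsilon\int_\Omega|\nabla c|^4\,dx$ contributes the term $C_5\int_\Omega|\nabla c|^4\,dx$. Collecting the surviving dissipative terms $-4\int_\Omega|\nabla^2c|^2|\nabla c|^2\,dx$ and $-4\int_\Omega n|\nabla c|^4\,dx$ on the left-hand side then yields \eqref{c4}.
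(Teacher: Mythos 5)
Your proof is correct and follows essentially the same route as the paper: differentiate $\int_\Omega|\nabla c|^4dx$, use the identity $\nabla c\cdot\nabla\Delta c=\tfrac12\Delta|\nabla c|^2-|\nabla^2c|^2$, control the boundary term via Lemma \ref{MS} plus a trace--interpolation--absorption argument, and handle the cross term $-4\int_\Omega c|\nabla c|^2\nabla c\cdot\nabla n\,dx$ by a weighted Young inequality with exponents $4$ and $4/3$, which reproduces the stated constants exactly. The only cosmetic difference is that the paper absorbs the boundary term through the fractional trace embedding $\||\nabla c|^2\|_{H^{1/2+s_0}(\Omega)}$ interpolated between $H^1$ and $L^2$, whereas you use the equivalent $L^2(\partial\Omega)$ trace interpolation inequality directly.
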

\begin{proof}
	Taking gradient of the second equation in \eqref{chemo1}, multiplying the resultant by $|\nabla c|^2\nabla c$ and integrating over $\Omega$, we get
\begin{align}
& \frac{1}{4}\frac{d}{dt}\int_\Omega |\nabla c|^4 dx
  +\frac12\int_\Omega\big|\nabla|\nabla c|^2\big|^2dx
  +\int_\Omega |\nabla^2c|^2|\nabla c|^2 dx
  +\int_\Omega n |\nabla c|^4 dx\non\\
&\quad =\int_{\partial\Omega}\frac12 |\nabla c|^2 \frac{\partial}{\partial\nu}|\nabla c|^2dS
         -\int_\Omega c|\nabla c|^2 \nabla n \cdot\nabla c dx,\label{nc1}
\end{align}
where we have used the identity $-\nabla \Delta c\cdot \nabla c= -\frac12 \Delta |\nabla c|^2+|\nabla^2 c|^2$ and integration by parts.

Using Lemma \ref{MS} and the trace theorem, we deduce that for some $s_0\in(0,\frac12)$,
\begin{align}
	\int_{\partial\Omega}\frac12 |\nabla c|^2 \frac{\partial}{\partial\nu}|\nabla c|^2dS
 \leq \kappa(\Omega)\int_{\partial\Omega}|\nabla c|^4ds\leq C(\Omega)\||\nabla c|^2\|^2_{H^{\frac12+s_0}(\Omega)}.\non
\end{align}
It follows from the interpolation inequality that
\begin{align}
	\||\nabla c|^2\|^2_{H^{\frac12+s_0}(\Omega)}
&\leq C\||\nabla c|^2\|^{1+2s_0}_{H^1(\Omega)}\||\nabla c|^2\|_{L^2(\Omega)}^{1-2s_0}\non\\
&\leq C\|\nabla|\nabla c|^2\|_{L^2(\Omega)}^{1+2s_0}\||\nabla c|^2\|_{L^2(\Omega)}^{1-2s_0}+C\||\nabla c|^2\|_{L^2(\Omega)}^{2}.\non
\end{align}
Therefore, by Young's inequality, we obtain from \eqref{nc1} that
\begin{align}
&\frac{1}{4}\frac{d}{dt}\int_\Omega |\nabla c|^4 dx+\frac12\int_\Omega\big|\nabla|\nabla c|^2\big|^2dx+\int_\Omega |\nabla^2c|^2|\nabla c|^2 dx
           +\int_\Omega n|\nabla c|^4dx\non\\
&\quad \leq \frac14 \int_\Omega\big|\nabla|\nabla c|^2\big|^2dx
             +C\int_\Omega |\nabla c|^4dx
             +\frac{\delta_0}{4}\int_\Omega\frac{|\nabla n|^4}{n^3} dx
             +\frac34\delta_0^{-\frac13}\int_\Omega c^{\frac43} n|\nabla c|^4dx\non\\
&\quad \leq \frac14 \int_\Omega\big|\nabla|\nabla c|^2\big|^2dx
             +\frac{\delta_0}{4}\int_\Omega\frac{|\nabla n|^4}{n^3} dx
             +C\int_\Omega |\nabla c|^4dx \non\\
&\qquad      +\frac34\left(\delta_0^{-1}\|c_0\|^4_{L^\infty(\Omega)}\right)^\frac13\int_\Omega n|\nabla c|^4dx,
             \label{keyest000}
\end{align}
which implies \eqref{c4}. The proof is complete.
\end{proof}

\textbf{Sixth estimate.} Finally, combing the above differential inequalities, we are able to conclude that

\begin{lemma}\label{LEM}
Let $(n,c)$ be a local classical solution to problem \eqref{chemo1}--\eqref{assmp} and $\kappa_3>0$ be an arbitrary positive constant. Then it holds
\begin{align}
&\frac{d}{dt}\int_\Omega
               \left[\frac n2|\na\log n|^2+\frac{\kappa_1}{2\chi}n^2c+\frac{\kappa_1}{\chi^2}n^2
         + \big(\frac{\kappa_1}{2}+\kappa_2\big)n|\na c|^2 +\kappa_2|\Delta c|^2+\kappa_3|\nabla c|^4\right]dx\non\\
&\qquad  + \frac{\kappa_1}{\chi^2}\int_\Omega |\na n|^2dx
         +\frac{\kappa_1}{2\chi}\int_\Omega cn^3dx+\frac{\kappa_1}{\chi}\int_\Omega c|\nabla n|^2dx+\frac{\k_2}{2}\int_\Omega|\na c_t|^2dx\non\\
&\qquad +\frac{\k_2}{2}\int_\Omega |\nabla\Delta c|^2dx
        +\frac14(\kappa_1+\k_2)\int_\Omega n|\Delta c|^2dx +\kappa_3\int_\Omega\big|\nabla|\nabla c|^2\big|^2dx\non\\
&\qquad +4\kappa_3\int_\Omega |\nabla^2c|^2|\nabla c|^2 dx\non\\
&\quad \leq  \left[(2+\sqrt{3})^2 \kappa_3\delta_0-\frac18\right]\int_\Omega n |\nabla^2\log n|^2dx \non\\
&\qquad +\left[C_3 +3\kappa_3\left(\delta_0^{-1}\|c_0\|^4_{L^\infty(\Omega)}\right)^\frac13-4\kappa_3\right] \int_\Omega n|\nabla c|^4dx\non\\
&\qquad + \kappa_3 C_5\int_\Omega |\nabla c|^4dx +C_0\|n\|_{L^1(\Omega)}, \label{energy001}
\end{align}
where the coefficients $\kappa_1$, $\kappa_2$, $\delta_0$, $C_0$, $C_3$, $C_5$ are chosen as in Lemmas \ref{lm3.8}, \ref{lm3.9}.
\end{lemma}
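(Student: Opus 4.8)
The plan is to form a linear combination of the two differential inequalities already at hand, namely \eqref{energy00} from Lemma \ref{lm3.8} and \eqref{c4} from Lemma \ref{lm3.9}, and then to dispose of the single term that the combination produces which is not yet in one of the admissible forms on the right-hand side of \eqref{energy001}. Concretely, I would multiply \eqref{c4} by the free constant $\kappa_3>0$ and add the result to \eqref{energy00}. Nearly every term then lines up by inspection: the time-derivative integrand of \eqref{c4} contributes the extra summand $\kappa_3|\na c|^4$ under $\frac{d}{dt}$, while the three nonnegative dissipation terms $\int_\Omega\big|\na|\na c|^2\big|^2dx$, $4\int_\Omega|\na^2 c|^2|\na c|^2dx$ and $4\int_\Omega n|\na c|^4dx$ are simply scaled by $\kappa_3$ and kept on the left-hand side, matching exactly the corresponding entries of \eqref{energy001}. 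The terms $\kappa_3 C_5\int_\Omega|\na c|^4dx$ and $C_0\|n\|_{L^1(\Omega)}$ are carried over to the right-hand side unchanged.

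The only term demanding actual work is $\kappa_3\delta_0\int_\Omega\frac{|\na n|^4}{n^3}dx$, which sits on the right-hand side after the combination. Here I would invoke \eqref{n3}, i.e.\ Lemma \ref{lm1} specialized to $h(s)=s$, which gives $\int_\Omega\frac{|\na n|^4}{n^3}dx\leq(2+\sqrt{3})^2\int_\Omega n|\na^2\log n|^2dx$. Substituting converts the offending term into $(2+\sqrt{3})^2\kappa_3\delta_0\int_\Omega n|\na^2\log n|^2dx$, which I would then combine with the good dissipation $\frac18\int_\Omega n|\na^2\log n|^2dx$ inherited from the left-hand side of \eqref{energy00}; this is precisely what produces the bracketed coefficient $[(2+\sqrt{3})^2\kappa_3\delta_0-\frac18]$ multiplying $\int_\Omega n|\na^2\log n|^2dx$ on the right of \eqref{energy001}. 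In the same vein, I would collect the two sources of $\int_\Omega n|\na c|^4dx$ on the right-hand side, namely the term $C_3\int_\Omega n|\na c|^4dx$ from \eqref{energy00} and the term $3\kappa_3\big(\delta_0^{-1}\|c_0\|^4_{L^\infty(\Omega)}\big)^{1/3}\int_\Omega n|\na c|^4dx$ from $\kappa_3\times$\eqref{c4}, and offset them against the dissipative contribution $4\kappa_3\int_\Omega n|\na c|^4dx$ kept on the left, yielding the bracketed coefficient $[C_3+3\kappa_3(\delta_0^{-1}\|c_0\|^4_{L^\infty(\Omega)})^{1/3}-4\kappa_3]$.

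Since this is a purely linear bookkeeping step once \eqref{n3} is available, I do not anticipate any serious analytic obstacle; the one point requiring care is that $\kappa_3$ must be kept genuinely free, so that the two bracketed coefficients on the right of \eqref{energy001} are displayed explicitly rather than prematurely absorbed. This is deliberate: it leaves room to tune $\kappa_3$ together with $\delta_0$ in the subsequent step so as to render both brackets nonpositive and thereby close the estimate. Assembling the above pieces, with $\kappa_1,\kappa_2,\delta_0,C_0,C_3,C_5$ fixed exactly as in Lemmas \ref{lm3.8} and \ref{lm3.9}, reproduces the claimed inequality \eqref{energy001}.
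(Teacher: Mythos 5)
Your proposal is correct and is exactly the paper's own proof: multiply \eqref{c4} by $\kappa_3$, add it to \eqref{energy00}, and use \eqref{n3} (Lemma \ref{lm1} with $h(s)=s$) to convert $\kappa_3\delta_0\int_\Omega\frac{|\nabla n|^4}{n^3}dx$ into $(2+\sqrt{3})^2\kappa_3\delta_0\int_\Omega n|\nabla^2\log n|^2dx$, after which the bracketed coefficients arise from the same bookkeeping you describe. The only cosmetic slip is your initial remark that all three $\kappa_3$-dissipation terms stay on the left "matching" \eqref{energy001}, whereas $4\kappa_3\int_\Omega n|\nabla c|^4dx$ is in fact moved to the right into the bracket, but you carry out that offset correctly in your second paragraph.
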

\begin{proof}
Multiplying \eqref{c4} by an arbitrary constant $\kappa_3>0$, adding the resultant to \eqref{energy00} and using the estimate \eqref{n3}, we arrive at our conclusion.
\end{proof}

%%%%%%%%%%%%%%%%%%%%%%%%%%%%%%%%%%%%%%%%%%%%%%%%%%%%%%%%%%%%%%%%%%%
\section{Proof of Main Results}

\subsection{A blow-up criterion in terms of the unknown variable $c$}

It has been shown in \cite{Tao} that the estimate of $\|n\|_{L^{d+1}(\Omega)}$ ($d\geq 2$) plays an important role in the proof of global existence of classical solutions to problem \eqref{chemo1}--\eqref{assmp}.
Adapting the argument there to our current case $d=3$, we have

\begin{lemma}\label{eL4}
Let the assumptions of Theorem \ref{mr1} hold. If
\begin{align}
\|n(\cdot, t)\|_{L^4(\Omega)}\leq C_6, \quad \forall\, t\in [0,T_{max}),\label{nL4}
\end{align}
for some  constant $C_6>0$, then there exists a constant $C_7>0$ such that
\begin{align}
\|n(\cdot, t)\|_{L^\infty(\Omega)}\leq C_7, \quad \forall\, t\in [0,T_{max}).\label{nLinf}
\end{align}
\end{lemma}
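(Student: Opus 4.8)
The plan is to run a Moser-type iteration on the powers $\|n\|_{L^{p}(\Omega)}$, bootstrapping from the assumed $L^4$-bound up to the $L^\infty$-bound. The first step is to test the mass-conservation equation $n_t=\Delta n-\chi\nabla\cdot(n\nabla c)$ against $n^{p-1}$ for a generic exponent $p\geq 4$ and integrate over $\Omega$. Integration by parts, using the homogeneous Neumann condition \eqref{chemo0}, produces the dissipation term proportional to $\int_\Omega n^{p-2}|\nabla n|^2\,dx=\tfrac{4}{p^2}\int_\Omega|\nabla n^{p/2}|^2\,dx$ on the left, while the chemotactic term yields $\chi(p-1)\int_\Omega n^{p-1}\nabla n\cdot\nabla c\,dx$. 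The aim is to absorb the chemotactic contribution into the gradient dissipation and a controllable lower-order term; the quantity one needs to control here is $\int_\Omega n^{p}|\Delta c|\,dx$ (after a further integration by parts) or, alternatively, $\int_\Omega n^{p-1}|\nabla n||\nabla c|\,dx$ handled by Young's inequality.

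The key input for closing the estimate is a good bound on $\nabla c$ and $\Delta c$ in terms of $n$. Since $c$ solves $c_t=\Delta c-nc$ with $0<c\leq\|c_0\|_{L^\infty(\Omega)}$ (Proposition \ref{lc}), standard parabolic smoothing and the $L^p$--$L^q$ estimates for the Neumann heat semigroup convert an $L^{s}$-bound on $n$ into bounds on $\nabla c$ in higher Lebesgue spaces. Concretely, from the assumed $\|n\|_{L^4(\Omega)}\leq C_6$ I would first derive that $\|\nabla c(t)\|_{L^q(\Omega)}$ is bounded for some $q>3$, which is precisely the regularity threshold appearing in Proposition \ref{lc}; by the smoothing property of the semigroup applied to $\nabla e^{t\Delta}$ against the source $nc$, an $L^4$ control of $n$ suffices to bound $\nabla c$ in $L^\infty$, or at least in an $L^q$ with $q$ large enough for the Gagliardo--Nirenberg step below. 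This is the role played in \cite{Tao} by the $L^{d+1}$-estimate, and here $d+1=4$, which is exactly why the hypothesis is stated at the level $L^4$.

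With $\nabla c$ controlled, the next step is to close the $L^p$ inequality via the Gagliardo--Nirenberg interpolation inequality, writing
\begin{equation}
\int_\Omega n^{p+1}\,dx\leq \varepsilon\int_\Omega |\nabla n^{p/2}|^2\,dx+C(\varepsilon,p)\Big(\int_\Omega n^{p/2}\,dx\Big)^{\theta},\nonumber
\end{equation}
so that the right-hand side depends only on a lower power of $n$. This yields a differential inequality of the form $\frac{d}{dt}\|n\|_{L^p}^p+\|n\|_{L^p}^p\leq C_p$, giving a uniform-in-time bound on $\|n\|_{L^p(\Omega)}$ for each fixed $p$. The final step is to track the $p$-dependence of the constants $C_p$ through the iteration $p_{k+1}=2p_k$ (or similar), verifying that $\sup_k\|n\|_{L^{p_k}(\Omega)}$ stays finite, and then let $p\to\infty$ to conclude \eqref{nLinf}.

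The main obstacle I expect is the careful bookkeeping of how the interpolation constants grow with $p$ so that the Moser iteration genuinely converges rather than producing a bound that blows up as $p\to\infty$; this is where the structure of the coupling, in particular the uniform bound $c\leq\|c_0\|_{L^\infty(\Omega)}$ and the gradient regularity of $c$ extracted in the preceding step, must be used quantitatively. The boundary terms are, by contrast, harmless here because testing against $n^{p-1}$ under the Neumann condition generates no boundary integral — the delicate boundary analysis was already expended on the higher-order estimates of Section 3, and this lemma only needs the first-order energy structure together with the auxiliary regularity of $c$.
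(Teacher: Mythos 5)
Your proposal is correct and takes essentially the same route as the paper: the paper's proof (which it omits, referring to \cite[Lemma 3.2]{Tao}) likewise first uses the smoothing estimates of the Neumann heat semigroup to convert the assumed $L^4$ bound on $n$ (together with $0<c\leq\|c_0\|_{L^\infty(\Omega)}$) into a uniform bound on $\|c\|_{W^{1,\infty}(\Omega)}$, and then runs the Moser--Alikakos iteration of \cite{Ali} to obtain the uniform $L^\infty$ bound on $n$.
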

To prove Lemma \ref{eL4}, one can first apply the semigroup approach to establish a uniform bound for
$\|c\|_{W^{1,\infty}(\Omega)}$ and then employ the iterative technique of Moser \& Alikakos (see e.g., \cite{Ali}) to derive a uniform bound on $\|n\|_{L^\infty(\Omega)}$.
The proof is omitted here and we refer the reader to \cite[Lemma 3.2]{Tao} for details.

\begin{remark}
Noticing the uniform estimate $\|c(t)\|_{L^\infty}\leq \|c_0\|_{L^\infty}$, the system \eqref{chemo1}--\eqref{assmp} fulfills those conditions in \cite[Lemma 3.2]{BBTW} with the choice of parameters $\alpha=0$ and $\beta=1$. Then a blow-up criterion for classical solution to our system in terms of $n$ is given by
\begin{equation}
	\limsup_{t\nearrow T_{max}^-}\|n(t)\|_{L^{\frac{d}{2}+\e}(\Omega)}=+\infty, \quad \text{for}\ \e>0.\label{subcri0}
\end{equation}
Here we note that $L^\infty(0,T; L^{\frac{d}{2}}(\Omega))$ is a critical scaling-invariant space for $n$, thus \eqref{subcri0} is given in a subcritical space.
The criterion \eqref{subcri0} was obtained in \cite{BBTW} by semigroup method with some delicate estimates.
Below we shall see that this criterion in three-dimensional case will be recovered in Corollary \ref{cor42}, by the standard energy method and our higher-order energy estimate \eqref{energy001}.
\end{remark}

Now, we can derive a blow-up criterion for the unknown  $c$ corresponding to \eqref{criCKL} when $\Omega\subset \mathbb{R}^3$ is a bounded smooth domain.

\begin{proposition}\label{blc}
Under the same assumptions of Theorem \ref{mr1}, if $T_{max}\in(0,+\infty)$ is the maximal existing time of the local classical solution to problem \eqref{chemo1}--\eqref{assmp}, then
\begin{align}
&\|\nabla c(t)\|_{L^2(0,T_{max};\, L^\infty(\Omega))}=+\infty.\label{criCKL1}
\end{align}
\end{proposition}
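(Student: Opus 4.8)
The plan is to argue by contradiction. Suppose $T_{max}<\infty$ but, contrary to \eqref{criCKL1}, that $\int_0^{T_{max}}\|\nabla c(t)\|_{L^\infty(\Omega)}^2\,dt=:K<\infty$. I will show this forces $\|n(t)\|_{L^{3/2+\epsilon}(\Omega)}$ to stay bounded up to $T_{max}$, which contradicts the subcritical blow-up criterion \eqref{subcri0} (equivalently \eqref{cri00}) and hence contradicts the finiteness of $T_{max}$. The engine is the master differential inequality \eqref{energy001} of Lemma \ref{LEM}, applied to the functional $\mathcal{E}(t)$ equal to the bracketed integral there, namely $\mathcal{E}(t)=\int_\Omega[\frac n2|\nabla\log n|^2+\frac{\kappa_1}{2\chi}n^2c+\frac{\kappa_1}{\chi^2}n^2+(\frac{\kappa_1}{2}+\kappa_2)n|\nabla c|^2+\kappa_2|\Delta c|^2+\kappa_3|\nabla c|^4]\,dx$.

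First I fix $\kappa_1,\kappa_2,\delta_0$ as in Lemmas \ref{lm3.8} and \ref{lm3.9}, and then choose $\kappa_3>0$ so small that $(2+\sqrt3)^2\kappa_3\delta_0\le\frac18$; this makes the coefficient of $\int_\Omega n|\nabla^2\log n|^2\,dx$ on the right of \eqref{energy001} nonpositive, so that term may be discarded, while all the dissipative terms on the left are nonnegative and may be dropped. What remains is $\frac{d}{dt}\mathcal{E}\le C_8\int_\Omega n|\nabla c|^4\,dx+\kappa_3 C_5\int_\Omega|\nabla c|^4\,dx+C_0\|n\|_{L^1(\Omega)}$, where $C_8$ is the (possibly positive) constant multiplying $\int_\Omega n|\nabla c|^4$. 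Here lies the crux, and the reason this is only a conditional criterion: the term $\int_\Omega n|\nabla c|^4$ cannot in general be absorbed into the good term $\int_\Omega n|\nabla^2\log n|^2$, since forcing its coefficient negative would require both $\delta_0$ and $\kappa_3$ large, in direct conflict with the smallness $(2+\sqrt3)^2\kappa_3\delta_0\le\frac18$ imposed above.

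The extra hypothesis $\nabla c\in L^2(0,T_{max};L^\infty)$ is exactly what lets me bypass absorption. I estimate $\int_\Omega n|\nabla c|^4\,dx\le\|\nabla c\|_{L^\infty(\Omega)}^2\int_\Omega n|\nabla c|^2\,dx\le C\|\nabla c\|_{L^\infty(\Omega)}^2\,\mathcal{E}(t)$, using that $\int_\Omega n|\nabla c|^2$ is controlled by $\mathcal{E}$, and $\int_\Omega|\nabla c|^4\,dx\le\|\nabla c\|_{L^\infty(\Omega)}^2\int_\Omega|\nabla c|^2\,dx\le C\|\nabla c\|_{L^\infty(\Omega)}^2$, using the earlier uniform bound $\sup_{[0,T]}\|\nabla c\|_{L^2(\Omega)}^2\le C(1+T)$, which is finite since $T_{max}<\infty$. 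With $\|n\|_{L^1(\Omega)}=\|n_0\|_{L^1(\Omega)}$ constant by mass conservation, this yields $\frac{d}{dt}\mathcal{E}\le a(t)\mathcal{E}+b(t)$ with $a(t)=C\|\nabla c\|_{L^\infty(\Omega)}^2$ and $b(t)=C\|\nabla c\|_{L^\infty(\Omega)}^2+C$, both in $L^1(0,T_{max})$ precisely because $K<\infty$.

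Starting the flow from some $\tau_0\in(0,T_{max})$, where $\mathcal{E}(\tau_0)<\infty$ by the smoothing of Remark \ref{REM1}, Gronwall's inequality gives $\sup_{t\in[\tau_0,T_{max})}\mathcal{E}(t)\le C<\infty$. In particular $\int_\Omega|\nabla\sqrt n|^2\,dx=\tfrac14\int_\Omega n|\nabla\log n|^2\,dx$ stays bounded; together with $\|\sqrt n\|_{L^2(\Omega)}^2=\|n_0\|_{L^1(\Omega)}$ this bounds $\|\sqrt n\|_{H^1(\Omega)}$, whence the Sobolev embedding $H^1(\Omega)\hookrightarrow L^6(\Omega)$ ($d=3$) gives $\|n(t)\|_{L^3(\Omega)}=\|\sqrt n(t)\|_{L^6(\Omega)}^2\le C$ uniformly on $[\tau_0,T_{max})$. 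Since $\Omega$ is bounded, this controls $\|n(t)\|_{L^{3/2+\epsilon}(\Omega)}$ for any $\epsilon\in(0,\tfrac32]$, which together with the bound on $[0,\tau_0]$ from Remark \ref{REM1} contradicts \eqref{subcri0}. The main obstacle, as stressed above, is purely the handling of $\int_\Omega n|\nabla c|^4$; once the $L^2(0,T_{max};L^\infty)$ assumption converts it into an $L^1_t$ Gronwall weight, the rest is bookkeeping with Gronwall and Sobolev.
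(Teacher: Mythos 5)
Your argument is mechanically sound, but it takes a genuinely different and considerably heavier route than the paper, and it leans on one external crutch that deserves a warning. The paper's proof is much more elementary: it tests the $n$-equation with $n^3$ to get
\begin{equation*}
\frac14\frac{d}{dt}\int_\Omega n^4\,dx+3\int_\Omega n^2|\nabla n|^2\,dx
\leq \frac32\int_\Omega n^2|\nabla n|^2\,dx+\frac32\chi^2\|\nabla c\|^2_{L^\infty(\Omega)}\int_\Omega n^4\,dx,
\end{equation*}
so that under the contradiction hypothesis $\nabla c\in L^2(0,T_{max};L^\infty(\Omega))$, Gronwall immediately gives $n\in L^\infty(0,T_{max};L^4(\Omega))$; then Lemma \ref{eL4} (Tao's semigroup plus Moser--Alikakos iteration) upgrades this to $n\in L^\infty(0,T_{max};L^\infty(\Omega))$, contradicting the extensibility criterion \eqref{cri00} from Proposition \ref{lc}. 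Your route instead runs the master inequality \eqref{energy001}, converts $\int_\Omega n|\nabla c|^4\,dx$ into a Gronwall weight via $\|\nabla c\|^2_{L^\infty}\int_\Omega n|\nabla c|^2\,dx$, and extracts only an $L^\infty_t L^3_x$ bound on $n$ from $\nabla\sqrt n$. That is a valid chain of estimates (your parameter choices in Lemma \ref{LEM} are admissible, the lower-order bound $\sup_{[0,T_{max})}\|\nabla c\|_{L^2}^2<\infty$ is correctly invoked, and $V(\tau_0)<\infty$ follows from Remark \ref{REM1} exactly as the paper itself uses it), but it buys you strictly less than the paper's $L^4$ bound, and $L^3$ is not enough to feed into Lemma \ref{eL4}.

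This forces your endpoint to be the subcritical criterion \eqref{subcri0}, and here you should be careful on two counts. First, your parenthetical ``(equivalently \eqref{cri00})'' is false: \eqref{cri00} concerns $\|n\|_{L^\infty(\Omega)}+\|c\|_{W^{1,q}(\Omega)}$, and a uniform $L^3$ bound on $n$ does not contradict it; only \eqref{subcri0} does the job for you. Second, within this paper \eqref{subcri0} is only \emph{imported} (in a remark, from \cite[Lemma 3.2]{BBTW}); its in-paper proof is Corollary \ref{cor42}, whose argument runs through Theorem \ref{mr1}, which in turn invokes the very argument of Proposition \ref{blc} you are trying to prove. So your proof is non-circular only if you treat \eqref{subcri0} as an external black box from \cite{BBTW}, which the paper's remark does endorse. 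If you want a self-contained argument, note that once you assume $\nabla c\in L^2_tL^\infty_x$, the paper's two-line $L^4$ estimate above closes everything without \eqref{energy001} or \cite{BBTW} at all; the higher-order machinery is designed for Theorem \ref{mr1}, where no $L^\infty$ control of $\nabla c$ is available, and is unnecessary here.
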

\begin{proof}
Multiplying the first equation of \eqref{chemo1} by $n^{3}$, integrating over $\Omega$, after integration by parts, we get
\begin{align}
	\frac14\frac{d}{dt}\int_\Omega n^4dx+3\int_\Omega n^{2}|\nabla n|^2dx
    &\quad = 3\chi \int_\Omega n^{3}\nabla n\cdot \nabla cdx\nonumber\\
	&\quad \leq \frac{3}{2}\int_\Omega n^{2}|\nabla n|^2dx+\frac{3}{2}\chi^2\int_\Omega n^4|\nabla c|^2dx\nonumber\\
	&\quad \leq \frac{3}{2}\int_\Omega n^{2}|\nabla n|^2dx+\frac{3}{2}\chi^2 \|\nabla c\|^2_{L^\infty(\Omega)}\int_\Omega n^4dx.
   \label{energyabc}
\end{align}
If the criterion \eqref{criCKL1} is not valid, i.e., there exists a finite positive constant $C_8$ such that
\beq
\int_0^{T_{max}}\|\nabla c(t)\|^2_{L^\infty}dt\leq C_8,\label{enc}
\eeq
then we infer from \eqref{energyabc} and Gronwall's lemma that
\begin{equation}
	\int_\Omega n(t)^4dx<+\infty,\quad \forall\, t\in [0,T_{\max}).\non
\end{equation}
Once we get the estimate of $n$ in $L^\infty(0,T;L^4(\Omega))$, we can deduce from Lemma \ref{eL4} the bound of $n$ in $L^\infty(0,T_{max};L^\infty(\Omega))$, i.e., \eqref{nLinf}.
Then it follows from the local well-posedness result  that the local classical solution $(n,c)$ can be extended beyond $T_{max}$, which leads to a contradiction.

The proof is complete.
\end{proof}

%%%%%%%%%%%%%%%%%%%%%%%%%%%%%%%%%%%%%%%%%%%%%%%%%%%%%%%%%%%%%%%%%%%%%
\subsection{Proof of Theorem \ref{mr1}}
\noindent
Now we are in a position to prove Theorem \ref{mr1}.
Suppose that $T_{max}\in(0,+\infty)$ and \eqref{cri} is not valid, i.e.,
\beq \label{cri0}
\|n\|_{L^{r}(0,T_{max};\,L^s(\Omega))}<+\infty,\qquad\text{for}\;\;\frac{2}{s}+\frac{2}{r}\leq 2,\;\;\frac32<s\leq +\infty.
\eeq

We first derive the following estimates:
\begin{lemma}
If $T_{max}\in(0,+\infty)$, under the assumption \eqref{cri0}, for arbitrary but fixed time $\tau_0\in (0, \frac12 T_{max})$, the following estimates hold:
\begin{equation}\label{est1}
			\sup_{\tau_0\leq t\leq T_{max}}\int_\Omega\left(n|\na\log n|^2+|n|^2+ n|\na c|^2+|\Delta c|^2+|\nabla c|^4\right)dx	\leq C,
\end{equation}
	and
	\begin{align}\label{est2}
		&\int_{\tau_0}^{T_{max}}\left(\int_\Omega |\na n|^2dx +\int_\Omega n |\nabla^2\log n|^2dx +\int_\Omega |\na c_t|^2dx +\int_\Omega |\nabla\Delta c|^2dx \right.\non\\
		&\qquad\qquad  \left.+\int_\Omega n|\Delta c|^2dx+\int_\Omega c n^3 dx+\int_\Omega\big|\nabla|\nabla c|^2\big|^2dx+\int_\Omega n|\nabla c|^4dx \right)dt\leq C,
	\end{align}
where $C>0$ depends on $\|n_0\|_{L^\infty(\Omega)}$, $\|c_0\|_{W^{1,q}(\Omega)}$, $\Omega$, $\chi$,  $\tau_0$ as well as $T_{max}$.
\end{lemma}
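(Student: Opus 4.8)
The plan is to convert the master inequality \eqref{energy001} of Lemma \ref{LEM} into a Grönwall estimate for the nonnegative functional
\[
\mathcal{F}(t):=\int_\Omega\Big[\tfrac n2|\na\log n|^2+\tfrac{\k_1}{2\chi}n^2c+\tfrac{\k_1}{\chi^2}n^2+\big(\tfrac{\k_1}{2}+\k_2\big)n|\na c|^2+\k_2|\Delta c|^2+\k_3|\na c|^4\Big]dx,
\]
which, since $n,c>0$, dominates each of the quantities displayed in \eqref{est1}. First I would fix the still-free parameters $\k_3,\d_0>0$ so that $(2+\sqrt3)^2\k_3\d_0<\tfrac18$; then the factor multiplying $\int_\Omega n|\na^2\log n|^2dx$ on the right of \eqref{energy001} is strictly negative, and moving that term to the left produces a genuine dissipation $\gamma\int_\Omega n|\na^2\log n|^2dx$ with $\gamma=\tfrac18-(2+\sqrt3)^2\k_3\d_0>0$. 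After this choice the only terms left on the right-hand side are $C_9\int_\Omega n|\na c|^4dx$ with a finite constant $C_9=C_9(\Omega,\chi,\|c_0\|_{L^\infty(\Omega)})$ (if $C_9\le0$ the term is simply discarded), the harmless $\k_3C_5\int_\Omega|\na c|^4dx$, and the constant $C_0\|n\|_{L^1(\Omega)}=C_0\|n_0\|_{L^1(\Omega)}$.

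The decisive step is to tame $\int_\Omega n|\na c|^4dx$ using assumption \eqref{cri0} together with the dissipation $\k_3\int_\Omega\big|\na|\na c|^2\big|^2dx$ already sitting on the left. Setting $v:=|\na c|^2$ and applying Hölder's inequality with conjugate exponents $s,s'$ gives $\int_\Omega n|\na c|^4dx\le\|n\|_{L^s(\Omega)}\|v\|_{L^{2s'}(\Omega)}^2$. Because $\tfrac32<s\le\infty$ gives $2\le2s'<6$, the three-dimensional Gagliardo--Nirenberg inequality applies and yields $\|v\|_{L^{2s'}(\Omega)}\le C\|\na v\|_{L^2(\Omega)}^{\theta}\|v\|_{L^2(\Omega)}^{1-\theta}+C\|v\|_{L^2(\Omega)}$ with $\theta=\tfrac{3}{2s}$; crucially $\|v\|_{L^2(\Omega)}^2=\int_\Omega|\na c|^4dx$ is exactly the last ingredient of $\mathcal{F}$. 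Substituting this and invoking Young's inequality with the exponents $\big(\tfrac1\theta,\tfrac1{1-\theta}\big)$ lets me absorb the resulting power of $\|\na v\|_{L^2(\Omega)}$ into the dissipation, arriving at
\[
C_9\int_\Omega n|\na c|^4dx\le\tfrac{\k_3}{2}\int_\Omega\big|\na|\na c|^2\big|^2dx+C\big(\|n\|_{L^s(\Omega)}^{r}+\|n\|_{L^s(\Omega)}\big)\int_\Omega|\na c|^4dx,
\]
the point being that $\tfrac1{1-\theta}=\tfrac{2s}{2s-3}=r$, i.e. the exponent of $\|n\|_{L^s(\Omega)}$ is precisely the one singled out by $\tfrac3s+\tfrac2r=2$ in \eqref{cri0}.

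With this in hand, and after bounding $\k_3C_5\int_\Omega|\na c|^4dx\le C\mathcal{F}(t)$ and $\int_\Omega|\na c|^4dx\le\k_3^{-1}\mathcal{F}(t)$, the inequality \eqref{energy001} reduces to
\[
\frac{d}{dt}\mathcal{F}(t)+\tfrac12\mathcal{D}(t)\le g(t)\mathcal{F}(t)+C_0\|n_0\|_{L^1(\Omega)},\qquad g(t):=C\big(\|n\|_{L^s(\Omega)}^{r}+\|n\|_{L^s(\Omega)}+1\big),
\]
where $\mathcal{D}(t)\ge0$ collects all the dissipation integrals appearing in \eqref{est2}, including $\gamma\int_\Omega n|\na^2\log n|^2dx$. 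Since $r\ge1$ and $T_{max}<\infty$, assumption \eqref{cri0} gives $g\in L^1(0,T_{max})$, while at any fixed $\tau_0\in(0,\tfrac12T_{max})$ the smoothing asserted in Proposition \ref{lc} and Remark \ref{REM1} ensures $\mathcal{F}(\tau_0)<\infty$ (here $\tau_0>0$ is essential, as $\mathcal{F}$ involves $\Delta c$ and $\na\log n$). Grönwall's lemma on $[\tau_0,T_{max})$ then delivers $\sup_{\tau_0\le t<T_{max}}\mathcal{F}(t)\le C$, which is exactly \eqref{est1}; integrating the differential inequality once more over $[\tau_0,T_{max})$ and using this uniform bound gives $\int_{\tau_0}^{T_{max}}\mathcal{D}(t)\,dt\le C$, while the remaining entry $\int_{\tau_0}^{T_{max}}\!\int_\Omega n|\na c|^4\,dx\,dt$ of \eqref{est2} follows by feeding the sup-bound on $\mathcal{F}$ back into the key interpolation estimate. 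I expect the genuinely delicate point to be this interpolation-and-absorption step: one must verify that the Gagliardo--Nirenberg exponent forces exactly the critical time integrability $r=\tfrac{2s}{2s-3}$, so that only finiteness of $\|n\|_{L^r(0,T_{max};L^s(\Omega))}$ is used and no smallness of $\|c_0\|_{L^\infty(\Omega)}$ is required --- the constant $C_9$ does depend on $\|c_0\|_{L^\infty(\Omega)}$, but it needs only to be finite, not small.
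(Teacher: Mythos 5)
Your proposal is correct and follows essentially the same route as the paper: it starts from the master inequality \eqref{energy001} of Lemma \ref{LEM}, fixes $\kappa_3,\delta_0$ so that the $\int_\Omega n|\nabla^2\log n|^2dx$ term becomes dissipative, controls $\int_\Omega n|\nabla c|^4dx$ by H\"older in space, Sobolev/Gagliardo--Nirenberg interpolation and Young's inequality (absorbing into $\int_\Omega\big|\nabla|\nabla c|^2\big|^2dx$, with the critical exponent $\tfrac{2s}{2s-3}\leq r$ made integrable in time by \eqref{cri0} and $T_{max}<\infty$), and closes with Gr\"onwall from the finite initial value $V(\tau_0)<\infty$ guaranteed by Remark \ref{REM1}. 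The only cosmetic difference is that you treat $s=+\infty$ and $s<\infty$ in a unified interpolation framework, whereas the paper splits them into two cases, and you are slightly more explicit about recovering the $\int_{\tau_0}^{T_{max}}\int_\Omega n|\nabla c|^4\,dx\,dt$ entry of \eqref{est2} by reinserting the sup-bound into the interpolation estimate.
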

\begin{proof}
In Lemma \ref{LEM}, first taking $\kappa_3=1$ and then setting the parameter $\delta_0$ to be sufficiently small, we can deduce from \eqref{energy001} that for some $C_9>0$,
\begin{align}
&\frac{d}{dt}V(t)+G(t) \leq C_9 \int_\Omega n|\nabla c|^4dx+ C_5\int_\Omega |\nabla c|^4dx +C_0\|n_0\|_{L^1(\Omega)}. \label{energy002}
\end{align}
where
\begin{align}
&V(t)=\int_\Omega \left[\frac n2|\na\log n|^2+\frac{\kappa_1}{2\chi}n^2c+\frac{\kappa_1}{\chi^2}n^2
         + \big(\frac{\kappa_1}{2}+\kappa_2\big)n|\na c|^2 +\kappa_2|\Delta c|^2+\kappa_3|\nabla c|^4\right]dx,\label{V}
\end{align}
and
\begin{align}
 G(t)& = \frac{\kappa_1}{\chi^2}\int_\Omega |\na n|^2dx
         +\frac{\kappa_1}{2\chi}\int_\Omega cn^3dx+\frac{\kappa_1}{\chi}\int_\Omega c|\nabla n|^2dx+\frac{\k_2}{2}\int_\Omega|\na c_t|^2dx\non\\
&\quad +\frac{\k_2}{2}\int_\Omega |\nabla\Delta c|^2dx
        +\frac14(\kappa_1+\k_2)\int_\Omega n|\Delta c|^2dx +\kappa_3\int_\Omega\big|\nabla|\nabla c|^2\big|^2dx\non\\
&\quad +4\kappa_3\int_\Omega |\nabla^2c|^2|\nabla c|^2 dx+\frac{1}{16}\int_\Omega n |\nabla^2\log n|^2dx.\label{G}
\end{align}

\noindent Below we consider two cases.

 \textit{Case 1}. Suppose \eqref{cri0} holds with $s=+\infty$, $r\geq1$.

Using the estimate
\begin{equation}
	\int_\Omega n|\na c|^4dx \leq \|n\|_{L^\infty(\Omega)}\int_\Omega |\na c|^4dx,\label{nnn}
\end{equation}
we obtain that for any $t\in(0,T_{max})$
\begin{equation}
	\frac{d}{dt}V(t)+G(t)\leq C_{10}(1+\|n\|_{L^\infty(\Omega)}) V(t)+C_0\|n_0\|_{L^1(\Omega)}.\non
\end{equation}
Remark \ref{REM1} implies that $V(\tau_0)<+\infty$. Then under the assumption \eqref{cri0}, we conclude from Gronwall's lemma and the local estimate obtained in Remark \ref{REM1} that
\beq \sup_{\tau_0\leq t\leq T_{max}}V(t)+\int_{\tau_0}^{T_{max}}G(t)dt\leq C,\non
\eeq
which gives the estimates \eqref{est1} and \eqref{est2}.

\textit{Case 2.} Suppose \eqref{cri0} holds for $s\in (\frac32, +\infty)$. By the H\"{o}lder inequality, we have
\begin{equation}
	\int_\Omega n|\nabla c|^4dx\leq\|n\|_{L^s(\Omega)}\|\nabla c\|^4_{L^{4s'}(\Omega)},\quad \mathrm{with}\ \ \frac{1}{s}+\frac{1}{s'}=1,\non
\end{equation}
and by the Sobolev embedding theorem ($d=3$), it holds
\begin{equation}
	\|\nabla c\|^{4}_{L^{4s'}(\Omega)}\leq C\|\nabla|\nabla c|^2\|_{L^2(\Omega)}^{\frac{3}{s}}\|\nabla c\|_{L^4(\Omega)}^{4-\frac{6}{s}}+C\|\nabla c\|^4_{L^4(\Omega)}.\non
\end{equation}
Then it follows from Young's inequality that for any $\epsilon>0$
\begin{align}
		\int_\Omega n|\nabla c|^4dx &\leq \|n\|_{L^s(\Omega)}\|\nabla c\|^4_{L^{4s'}(\Omega)}\nonumber\\
		&\leq \epsilon\|\nabla|\nabla c|^2\|_{L^2(\Omega)}^2+C_{\epsilon}\bigg(\|n\|_{L^s(\Omega)}^{\frac{2s}{2s-3}}+\|n\|_{L^s(\Omega)}\bigg)\|\nabla c\|^4_{L^4(\Omega)},\label{nn2n}
\end{align}
 provided that $s>\frac{3}{2}$.
Since $r\geq \frac{2s}{2s-3}>1$, we infer from the assumption \eqref{cri0} that
\begin{equation}
	\int_0^{T_{max}}\bigg(\|n\|_{L^s(\Omega)}^{\frac{2s}{2s-3}}+\|n\|_{L^s(\Omega)}\bigg)dt<+\infty.\label{nn3n}
\end{equation}
Therefore, picking $\epsilon$ sufficiently small in \eqref{nn2n}, then again by Gronwall's lemma we may deduce from \eqref{energy002}, \eqref{nn3n} that the estimates \eqref{est1} and \eqref{est2} hold.

The proof is complete.
\end{proof}

Next, the estimates \eqref{est1}, \eqref{est2} and the Sobolev embedding theorem ($d=3$) yield that
\beq
\int_{\tau_0}^{T_{\max}} \|\nabla c(t) \|^2_{L^\infty(\Omega)}dt
\leq C(\Omega)\int_{\tau_0}^{T_{\max}}(\|\nabla \Delta c(t)\|_{L^2(\Omega)}^2+\|c(t)\|_{L^2(\Omega)}^2)dt
\leq C.\label{enc1}
\eeq
 Using a similar argument for Proposition \ref{blc} and Lemma \ref{eL4}, we can show that
 $$\sup\limits_{\tau_0\leq t\leq T_{max}}\|n(t)\|_{L^\infty(\Omega)}\leq C,$$
  which combined with the estimate of $\|n(t)\|_{L^\infty(\Omega)}$ on $[0,\tau_0]$ given in Remark \ref{REM1} implies the uniform bound
\begin{equation}
	\sup\limits_{0\leq t\leq T_{max}}\|n(t)\|_{L^\infty(\Omega)}\leq C.\non
\end{equation}

As a result, one can extend the local classical solution $(n,c)$ beyond $T_{max}$,
which contradicts with the definition that $T_{max}\in (0,+\infty)$ is the maximal existing time. The proof of Theorem \ref{mr1} is then complete. $\square$
\medskip

As a by-product of the differential inequality \eqref{energy001}, we also have the following global existence result for small initial data:

\begin{corollary}\label{smallini}
Let the assumptions of Theorem \ref{mr1} hold. There exits a positive constant $\sigma_0$ depending on $\Omega$ and $\chi$ such that if $\|c_0\|_{L^\infty(\Omega)}\leq \sigma_0$, then problem \eqref{chemo1}--\eqref{assmp} admits a unique global classical solution $(n,c)$.
\end{corollary}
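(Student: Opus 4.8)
The plan is to exploit the master differential inequality \eqref{energy001} of Lemma \ref{LEM}, in which both $\kappa_3$ and $\delta_0$ are still free, and to choose these two parameters together with a smallness threshold on $\|c_0\|_{L^\infty(\Omega)}$ so that the two ``bad'' terms on the right-hand side --- the coefficient of $\int_\Omega n|\nabla^2\log n|^2dx$ and that of $\int_\Omega n|\nabla c|^4dx$ --- become non-positive simultaneously. Making the first coefficient strictly negative lets one move the corresponding term to the left, reconstituting exactly the dissipation functional $G(t)$ of \eqref{G}; making the second non-positive lets one drop it. The only genuinely dangerous survivor is then $\kappa_3 C_5\int_\Omega|\nabla c|^4dx$, which is harmless because $\kappa_3\int_\Omega|\nabla c|^4dx\leq V(t)$ by the definition \eqref{V} (every summand of $V$ is non-negative since $n,c>0$). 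This reduces \eqref{energy001} to the closed Gronwall-type inequality $\frac{d}{dt}V(t)+G(t)\leq C_5 V(t)+C_0\|n_0\|_{L^1(\Omega)}$, where I have also used mass conservation $\|n(t)\|_{L^1(\Omega)}=\|n_0\|_{L^1(\Omega)}$.

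For the parameter selection I would first fix $\kappa_3=C_3$, so that $4\kappa_3-C_3=3C_3>0$, and then set $\delta_0=\big(16(2+\sqrt{3})^2\kappa_3\big)^{-1}$, which renders the coefficient $(2+\sqrt{3})^2\kappa_3\delta_0-\tfrac18=-\tfrac1{16}$ of $\int_\Omega n|\nabla^2\log n|^2dx$ strictly negative. With $\delta_0$ thus tied to $\kappa_3$, the condition $C_3+3\kappa_3(\delta_0^{-1}\|c_0\|_{L^\infty}^4)^{1/3}-4\kappa_3\leq 0$ for the $\int_\Omega n|\nabla c|^4dx$ coefficient collapses to $\big(16(2+\sqrt{3})^2\kappa_3\big)^{1/3}\|c_0\|_{L^\infty}^{4/3}\leq 1$, i.e.
\[
\|c_0\|_{L^\infty(\Omega)}\leq \sigma_0:=\big(16(2+\sqrt{3})^2 C_3\big)^{-1/4}.
\]
Since $\kappa_1,\kappa_2,C_3,C_5,C_0$ depend only on $\Omega$ and $\chi$ --- the bookkeeping in the proof of Lemma \ref{lm3.8} was arranged precisely so that these constants stay independent of $\|c_0\|_{L^\infty(\Omega)}$ --- the threshold $\sigma_0$ depends only on $\Omega$ and $\chi$, as claimed.

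Having closed the estimate, I would argue by contradiction: suppose $T_{max}<+\infty$. Fixing any $\tau_0\in(0,T_{max})$, Remark \ref{REM1} gives $V(\tau_0)<+\infty$, and Gronwall's lemma bounds $V(t)$ uniformly on $[\tau_0,T_{max})$; integrating the inequality over $[\tau_0,T_{max})$ bounds $\int_{\tau_0}^{T_{max}}G(t)\,dt$ as well. Together with the local bounds on $[0,\tau_0]$ from Remark \ref{REM1}, these are precisely the estimates \eqref{est1}--\eqref{est2}. From here the extension argument already carried out in the proof of Theorem \ref{mr1} applies verbatim: the control of $\int_0^{T_{max}}\|\nabla c\|^2_{L^\infty(\Omega)}dt$ via \eqref{enc1}, combined with Lemma \ref{eL4}, yields a uniform bound on $\|n(t)\|_{L^\infty(\Omega)}$ up to $T_{max}$, contradicting the blow-up alternative \eqref{cri00} of Proposition \ref{lc}. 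Hence $T_{max}=+\infty$, and uniqueness is inherited from the local well-posedness theory.

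The main obstacle I anticipate is the coupled parameter tuning. Forcing the $\int_\Omega n|\nabla^2\log n|^2dx$ coefficient negative demands $\kappa_3\delta_0$ small, whereas forcing the $\int_\Omega n|\nabla c|^4dx$ coefficient negative demands $\kappa_3$ large relative to $C_3$ together with $\delta_0^{-1}\|c_0\|^4_{L^\infty}$ small; these pull $\delta_0$ in opposite directions. After the substitution $\delta_0\sim\kappa_3^{-1}$ the decisive quantity scales like $\kappa_3^{4/3}\|c_0\|_{L^\infty}^{4/3}$, and the delicate point is to confirm that this still leaves room for a smallness threshold $\sigma_0$ that is independent of the initial data, depending only on $\Omega$ and $\chi$ through $C_3$. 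Everything else is a routine consequence of the higher-order estimates already in hand.
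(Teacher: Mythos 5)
Your proposal is correct and follows essentially the same route as the paper: the paper's own proof likewise takes $\kappa_3=C_3$ and $(2+\sqrt{3})^2\kappa_3\delta_0=\tfrac{1}{16}$ in \eqref{energy001}, defines $\sigma_0$ by $3\left(\delta_0^{-1}\sigma_0^4\right)^{1/3}=1$ (only trivially smaller than your threshold, since it makes the coefficient of $\int_\Omega n|\nabla c|^4dx$ strictly negative rather than merely non-positive), and then closes the estimate via Gronwall's lemma and the extension argument from the proof of Theorem \ref{mr1}. Your explicit remarks on absorbing $-\tfrac{1}{16}\int_\Omega n|\nabla^2\log n|^2dx$ into $G(t)$ and bounding $\kappa_3\int_\Omega|\nabla c|^4dx$ by $V(t)$ simply spell out what the paper leaves implicit.
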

\begin{proof}
In the inequality \eqref{energy001}, we take the parameters $\kappa_3$ and $\delta_0$ satisfying
$$\kappa_3=C_3, \quad (2+\sqrt{3})^2\kappa_3\delta_0=\frac{1}{16},$$
where we recall that $C_3$ depends only on $\Omega$ and $\chi$.
Let $\sigma_0>0$ be a constant such that
$$3\left(\delta_0^{-1}\sigma_0^4\right)^\frac13=1.$$
Then for any initial data satisfying $\|c_0\|_{L^\infty(\Omega)}\leq \sigma_0$, we infer from \eqref{energy001} that
\begin{align}
&\frac{d}{dt}V(t)+G(t) \leq C_3C_5\int_\Omega |\nabla c|^4dx +C_0\|n_0\|_{L^1(\Omega)}, \non
\end{align}
where $V(t)$ and $G(t)$ are given in \eqref{V} and \eqref{G}, respectively (with the only difference that here $\kappa_3=C_3$).
It easily follows from Gronwall's lemma that \eqref{enc1} holds.
As a consequence, we can argue as in the proof of Theorem \ref{mr1} to show that $T_{max}=+\infty$ and $(n,c)$ is indeed a global classical solution. The proof is complete.
\end{proof}

On the other hand, we would like to show that the blow-up criterion \eqref{subcri0} can be recovered by standard energy method.
\begin{corollary}\label{cor42}
Let $\e>0$. Suppose that the assumptions of Theorem \ref{mr1} are satisfied. If $T_{max}\in(0,+\infty)$ is the maximal existing time of the local classical solution $(n,c)$ to problem \eqref{chemo1}--\eqref{assmp}, then it holds
\begin{equation}	
\limsup_{t\nearrow T_{max}^-}\|n(t)\|_{L^{\frac{3}{2}+\e}(\Omega)}=+\infty.\label{subcri}
\end{equation}	
\end{corollary}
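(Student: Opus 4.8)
The plan is to argue by contradiction, reproducing verbatim \emph{Case 2} in the proof of Theorem \ref{mr1} with the specific choice $s=\frac32+\epsilon$ and the endpoint ``time exponent'' $r=+\infty$. Suppose \eqref{subcri} fails, so that $\limsup_{t\nearrow T_{max}^-}\|n(t)\|_{L^{3/2+\epsilon}(\Omega)}<+\infty$. Then there exist $\tau_1\in(0,T_{max})$ and $M>0$ with $\|n(t)\|_{L^{3/2+\epsilon}(\Omega)}\leq M$ on $[\tau_1,T_{max})$; combined with the local bound of Remark \ref{REM1} on $[0,\tau_1]$ (where $\|n(t)\|_{L^\infty(\Omega)}$, and hence $\|n(t)\|_{L^{3/2+\epsilon}(\Omega)}$ on the finite-measure domain $\Omega$, is controlled), this yields a uniform-in-time bound $\sup_{0\leq t<T_{max}}\|n(t)\|_{L^{3/2+\epsilon}(\Omega)}\leq M'<+\infty$. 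The goal is to show that this single bound already forces $\sup_{[0,T_{max})}\|n(t)\|_{L^\infty(\Omega)}<+\infty$, which contradicts the extensibility criterion \eqref{cri00} and thereby proves the corollary.

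The key step is to control the only ``bad'' term $\int_\Omega n|\nabla c|^4\,dx$ on the right-hand side of \eqref{energy001}. Setting $s=\frac32+\epsilon>\frac32$, I would invoke exactly the H\"older--Sobolev--Young chain already carried out in \eqref{nn2n}, namely
\[
\int_\Omega n|\nabla c|^4\,dx\leq \epsilon\big\|\nabla|\nabla c|^2\big\|_{L^2(\Omega)}^2+C_\epsilon\Big(\|n\|_{L^s(\Omega)}^{\frac{2s}{2s-3}}+\|n\|_{L^s(\Omega)}\Big)\|\nabla c\|_{L^4(\Omega)}^4.
\]
The crucial simplification relative to the general Theorem \ref{mr1} is that here the bracketed factor $\|n\|_{L^s(\Omega)}^{\frac{2s}{2s-3}}+\|n\|_{L^s(\Omega)}$ is \emph{bounded uniformly in $t$} by $(M')^{\frac{2s}{2s-3}}+M'$, so that its integral over the finite interval $(0,T_{max})$ is automatically finite with no further hypothesis. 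This is precisely the $r=+\infty$ endpoint, which is legitimate only because $T_{max}<+\infty$.

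With $\epsilon$ chosen small, I would absorb $\epsilon\big\|\nabla|\nabla c|^2\big\|_{L^2(\Omega)}^2$ into the good term $\kappa_3\int_\Omega\big|\nabla|\nabla c|^2\big|^2\,dx$ on the left of \eqref{energy001} (taking $\kappa_3=1$ and $\delta_0$ small as in the proof of Theorem \ref{mr1}), and bound $\int_\Omega|\nabla c|^4\,dx\leq\kappa_3^{-1}V(t)$ through the definition \eqref{V} of $V$. This produces a differential inequality of the form $\frac{d}{dt}V(t)+G(t)\leq C\,V(t)+C_0\|n_0\|_{L^1(\Omega)}$, with $G(t)\geq0$ given by \eqref{G}. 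Since $V(\tau_0)<+\infty$ by Remark \ref{REM1} for any fixed $\tau_0\in(0,T_{max})$ and $T_{max}<+\infty$, Gronwall's lemma gives $\sup_{\tau_0\leq t<T_{max}}V(t)+\int_{\tau_0}^{T_{max}}G(t)\,dt\leq C$, i.e. the analogues of \eqref{est1}--\eqref{est2}. From here I would follow the final part of the proof of Theorem \ref{mr1}: the Sobolev embedding applied to the $\int_\Omega|\nabla\Delta c|^2\,dx$ contribution of $G$ yields $\int_{\tau_0}^{T_{max}}\|\nabla c\|_{L^\infty(\Omega)}^2\,dt<+\infty$, and then the argument of Proposition \ref{blc} together with Lemma \ref{eL4} (the Moser--Alikakos iteration) upgrades this to $\sup_{[0,T_{max})}\|n(t)\|_{L^\infty(\Omega)}<+\infty$, contradicting \eqref{cri00}.

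The main obstacle is not analytic but bookkeeping: the threshold $\frac32$ is genuinely used only in the H\"older--Sobolev--Young step, where one needs $s>\frac32$ so that the Sobolev interpolation exponent on $\big\|\nabla|\nabla c|^2\big\|_{L^2(\Omega)}$ stays below $2$ (permitting its absorption) and so that $\frac{2s}{2s-3}$ remains finite. For $s=\frac32$ exactly this mechanism degenerates, which is why $L^\infty(0,T;L^{3/2}(\Omega))$ is merely critical and the subcritical exponent $\frac32+\epsilon$ is required. Everything else is a direct specialization of the estimates already established, so no new difficulty arises.
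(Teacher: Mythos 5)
Your proposal is correct, but it takes a genuinely different route from the paper's own proof of Corollary \ref{cor42}. You specialize \emph{Case 2} of the proof of Theorem \ref{mr1}: with $s=\tfrac32+\epsilon$, the H\"older--Sobolev--Young estimate \eqref{nn2n} splits $\int_\Omega n|\nabla c|^4dx$ into a piece absorbed by $G(t)$ and a multiple of $\|\nabla c\|^4_{L^4(\Omega)}\leq \kappa_3^{-1}V(t)$ whose coefficient $\|n\|_{L^s(\Omega)}^{\frac{2s}{2s-3}}+\|n\|_{L^s(\Omega)}$ is, under the contradiction hypothesis, bounded uniformly in time and hence integrable on $(0,T_{max})$ precisely because $T_{max}<+\infty$. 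This is tantamount to observing that the corollary is already a logical consequence of Theorem \ref{mr1}: on a finite time interval an $L^\infty$-in-time bound implies the $L^r$-in-time bound for the finite exponent $r=\tfrac{3+2\epsilon}{2\epsilon}$ admissible in \eqref{cri}. The paper instead gives a self-contained argument based on a level-set decomposition: it splits $\Omega$ into $\{n\leq N\}$ and $\Omega_N^t=\{n>N\}$, uses mass conservation to get $|\Omega_N^t|\leq N^{-1}\|n_0\|_{L^1(\Omega)}$, applies H\"older at the critical exponent $\tfrac32$ on $\Omega_N^t$, and exploits the extra $\epsilon$ of integrability to gain the factor $N^{-\frac{4\epsilon}{9+6\epsilon}}$; taking $N$ large makes the coefficient of $\|\nabla|\nabla c|^2\|_{L^2(\Omega)}^2+\|\nabla c\|_{L^4(\Omega)}^4$ genuinely \emph{small} (below $\kappa_3/2$), so it is absorbed into $G(t)$ with a constant-in-time leftover $C(N)\int_\Omega|\nabla c|^4dx$. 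Your version is shorter and makes transparent that subcritical criteria follow from critical ones whenever $T_{max}$ is finite; the paper's truncation argument exhibits the smallness mechanism (critical norm restricted to super-level sets) explicitly, which yields a time-independent absorption constant rather than relying on a Gronwall coefficient that is merely integrable in time. One cosmetic point in your last step: to contradict \eqref{cri00} one needs boundedness of $\|c\|_{W^{1,q}(\Omega)}$ as well as of $\|n\|_{L^\infty(\Omega)}$, but this comes for free from the semigroup estimate for $c$ inside the proof of Lemma \ref{eL4}, exactly as in the paper's own conclusion of Theorem \ref{mr1}.
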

\begin{proof}
 By the Sobolev embedding theorem $(d=3)$, it holds that
	\begin{equation}
	\|\nabla c\|_{L^{12}(\Omega)}^2=\||\nabla c|^2\|_{L^6(\Omega)}\leq C(\|\nabla |\nabla c|^2\|_{L^2(\Omega)}+\||\nabla c|^2\|_{L^2(\Omega)}).
\end{equation}
On the other hand, for any $t\in [0,T_{max})$,
\begin{align}
	\int_\Omega n|\nabla c|^4dx
    &=\int_{\{x\in\Omega,\, n(x,t)\leq N\}} n|\nabla c|^4dx
     +\int_{\{x\in\Omega,\, n(x,t)>N\}} n|\nabla c|^4dx\non\\
	&\leq N\int_{\Omega}|\nabla c|^4dx+\int_{\{x\in\Omega,\, n(x,t)>N\}} n|\nabla c|^4dx,
\end{align}
 where $N$ is a positive number to be chosen below. For convenience, we denote $\Omega_N^t=\{x\in\Omega,\, n(x,t)>N\}$.
 Then $|\Omega_N^t|\leq N^{-1}\|n_0\|_{L^1(\Omega)}$ since the total mass of $n$ is conserved.
 Moreover, it follows from the H\"{o}lder inequality that
\begin{align}
	\int_{\Omega_N^t} n|\nabla c|^4dx	
    &\leq \bigg(\int_{\Omega_N^t} n^{\frac{3}{2}}dx\bigg)^{\frac{2}{3}}\bigg(\int_{\Omega_N^t}|\nabla c|^{12}dx\bigg)^{\frac{1}{3}}\non\\
    &\leq \bigg(\int_{\Omega_N} n^{\frac{3}{2}}dx\bigg)^{\frac{2}{3}}\bigg(\int_{\Omega}|\nabla c|^{12}dx\bigg)^{\frac{1}{3}}\non\\
	&\leq \widetilde{C}\bigg(\int_{\Omega_N} n^{\frac{3}{2}}dx\bigg)^{\frac{2}{3}}\left(\|\nabla |\nabla c|^2\|_{L^2(\Omega)}^2+\|\nabla c\|_{L^4(\Omega)}^4\right).
\end{align}
If \eqref{subcri} is false, then there is  $M>0$, such that for any $T\in(0,T_{max})$,
\begin{equation}
	\sup\limits_{0\leq t\leq T}\|n(t)\|_{L^{\frac{3}{2}+\e}(\Omega)}\leq M.
\end{equation}
As a result, for any $t\in(0, T_{max})$
\begin{align}
	\bigg(\int_{\Omega_N^t} n^{\frac{3}{2}}dx\bigg)^{\frac{2}{3}}&\leq \bigg(\int_{\Omega_N^t} n^{\frac{3}{2}+\e}dx\bigg)^{\frac{2}{3+2\e}}\bigg{|}\Omega_N^t\bigg{|}^{\frac{4\e}{9+6\e}}\non\\
	&\leq MN^{-\frac{4\e}{9+6\e}}\|n_0\|^{\frac{4\e}{9+6\e}}_{L^1(\Omega)}.
\end{align}
Now in the differential inequality \eqref{energy001}, we first choose $\kappa_3$, $\delta_0 >0$ satisfying
\begin{align}
	&3\left(\delta_0^{-1}\|c_0\|^4_{L^\infty(\Omega)}\right)^\frac13=4,\quad (2+\sqrt{3})^2\kappa_3\delta_0=\frac{1}{16},\non
\end{align}
and then take $N>0$ sufficiently large such that
\begin{equation}
	C_3\widetilde{C}MN^{-\frac{4\e}{9+6\e}}\|n_0\|^{\frac{4\e}{9+6\e}}_{L^1(\Omega)}\leq\frac{\kappa_3}{2},
\end{equation}
 we thus infer from \eqref{energy001} that
\begin{align}
&\frac{d}{dt}V(t)+\frac12G(t) \leq C(N)\int_\Omega |\nabla c|^4dx +C_0\|n_0\|_{L^1(\Omega)}.
\end{align}
Again, we can argue as the proof of Theorem \ref{mr1} to show that the local classical solution  $(n,c)$ can be extended beyond  $T_{max}$, which leads to a contradiction.

The proof is complete.
\end{proof}
%%%%%%%%%%%%%%%%%%%%%%%%%%%%%%%%%%%%%%%%%%%%%%%%%%%%%%%%%%%%%%%%%%

\subsection{Proof of Theorem \ref{mr2}}
We prove Theorem \ref{mr2} by a contradiction argument.
Recalling the differential inequality \eqref{energy001}, we choose $\kappa_3$ and $\delta_0$ such that
\begin{equation}		
\kappa_3=\frac{C_3}{4},\quad (2+\sqrt{3})^2\kappa_3\delta_0=\frac{1}{16},\label{kkk3}
\end{equation}
where $C_3>0$ is a constant depending on $\Omega$ and $\chi$, which is given in Lemma \ref{lm3.8}.
Then letting
\begin{equation}
	\widetilde{V}(t)=V(t)+\|n_0\|_{L^1(\Omega)}>0,
\end{equation}
we infer from \eqref{energy001} that	
\begin{equation}
	\label{bu00}
     \frac{d}{dt}\widetilde{V}(t)+G(t)
        \leq \widetilde{C}\kappa_3\int_{\Omega}n|\nabla c|^4dx+C_5\kappa_3\int_\Omega |\nabla c|^4dx+C_0\|n_0\|_{L^1(\Omega)},
\end{equation} where
\begin{equation}
\widetilde{C}=3\delta_0^{-\frac{1}{3}}\|c_0\|_{L^\infty(\Omega)}^{\frac{4}{3}}=3\big[2(2+\sqrt{3})\big]^{\frac{2}{3}}C_3^{\frac{1}{3}}\|c_0\|_{L^\infty(\Omega)}^{\frac{4}{3}}.
\end{equation}
Denote
\begin{align}
\alpha:=\frac{1}{4\widetilde{C}}.\label{bula}
\end{align}
Below we will prove that if $T_{max}\in (0,+\infty)$ is the maximal existing time, then
\begin{equation}\label{bulb}
	\limsup\limits_{t\nearrow T_{max}^-} (T_{max}-t) \|n(t)\|_{L^\infty(\Omega)} \geq\alpha.
\end{equation}
If \eqref{bulb} is not true, there must exist a constant $\alpha_1\in(0,\alpha)$ such that
$$\limsup\limits_{t\nearrow T_{max}^-} (T_{max}-t) \|n(t)\|_{L^\infty(\Omega)} =\alpha_1.$$
Thus for $\alpha_0=\frac{\alpha+\alpha_1}{2}\in(\alpha_1,\alpha)$,  there exists a time $\tau_1\in [\frac12T_{max},T_{max})$ that may depend on $\alpha_0$  such that
\begin{equation}
\|n(t)\|_{L^\infty(\Omega)}\leq \alpha_0(T_{max}-t)^{-1}, \quad \forall\, t\in [\tau_1,T_{max}).\label{bu1}
\end{equation}
It follows  that
\begin{equation}
	\label{bu2}
    \int_\Omega n|\nabla c|^4dx\leq \alpha_0(T_{max}-t)^{-1}\int_{\Omega}|\nabla c|^4dx,\quad \forall\, t\in[\tau_1,T_{max}),
\end{equation}
and hence we infer from \eqref{bu00} that for $ t\in[\tau_1,T_{max})$,
\begin{align}
\frac{d}{dt}\tilde{V}(t)+G(t)
        &\leq \widetilde{C}\alpha_0(T_{max}-t)^{-1}\left(\kappa_3\int_{\Omega}|\nabla c|^4dx\right)
         +C_5\left(\kappa_3\int_\Omega |\nabla c|^4dx\right)\non\\
        &\quad +C_0\|n_0\|_{L^1(\Omega)}.\label{bu0}
\end{align}
Let $\eta>0$ be the constant such that
\begin{equation}
	(1+2\eta)\alpha_0=\alpha.
\end{equation}
We can find $\tau_2\geq \tau_1$ that satisfies
\begin{equation}
	\max\{C_5, C_0\}\leq \eta \widetilde{C} \alpha_0(T_{max}-t)^{-1},\quad\forall\,t\in[\tau_2, T_{max}).\non
\end{equation}
Then it follows from \eqref{bu0} and the definition of $\tilde{V}(t)$  that
\begin{equation}
	\label{bu3}
	\frac{d}{dt}\log\tilde{V}(t)\leq (1+\eta)\widetilde{C}\alpha_0(T_{max}-t)^{-1},\quad\forall\,t\in[\tau_2, T_{max}).
\end{equation}
Noticing the fact
\begin{equation}
	\int_{\tau_2}^t (T_{max}-s)^{-1}ds=-\log(T_{max}-s)\bigg{|}_{\tau_2}^t
        =\log\frac{T_{max}-\tau_2}{T_{max}-t},\quad\forall\,t\in[\tau_2, T_{max}),\non
\end{equation}
and integrating \eqref{bu3} with respect to time, we obtain
\begin{equation}
	\log\tilde{V}(t)\leq \log\tilde{V}(\tau_2)+ (1+\eta)\widetilde{C}\alpha_0\log\frac{T_{max}-\tau_2}{T_{max}-t},\quad\forall\,t\in[\tau_2, T_{max}).\non
\end{equation}
Hence,
\begin{align}
	\tilde{V}(t)
&\leq \tilde{V}(\tau_2)\bigg(\frac{T_{max}-\tau_2}{T_{max}-t}\bigg)^{(1+\eta)\widetilde{C}\alpha_0}\non\\
&\leq \tilde{V}(\tau_2)T_{max}^{(1+\eta)\widetilde{C}\alpha_0} 2^{-(1+\eta)\widetilde{C}\alpha_0}(T_{max}-t)^{-(1+\eta)\widetilde{C}\alpha_0}\non\\\
& :=M(T_{max}-t)^{-(1+\eta)\widetilde{C}\alpha_0},
 \label{bu4}
\end{align}
with $M=\tilde{V}(\tau_2)T_{max}^{(1+\eta)\widetilde{C}\alpha_0} 2^{-(1+\eta)\widetilde{C}\alpha_0}$.

Multiplying the first equation of \eqref{chemo1} by $n$ and integrating over $\Omega$, we have
\begin{align}
	&\frac{1}{2}\frac{d}{dt}\int_\Omega n^2dx +\int_\Omega |\nabla n|^2dx\non\\
    &\quad =\chi\int_\Omega n\nabla c\cdot\nabla ndx\nonumber\\
	&\quad \leq\frac{1}{2}\|\nabla n\|_{L^2(\Omega)}^2+\frac{\chi^2}{2}\int_\Omega n^2|\nabla c|^2dx\nonumber\\
	&\quad \leq\frac{1}{2}\|\nabla n\|_{L^2(\Omega)}^2+\frac{\chi^2}{2}\bigg(\int_\Omega|\nabla c|^4dx\bigg)^{\frac{1}{2}}\bigg(\int_\Omega n^4dx\bigg)^{\frac12}\nonumber\\
	&\quad \leq\frac{1}{2}\|\nabla n\|_{L^2(\Omega)}^2+\frac{\chi^2}{2}\|n\|^{\frac12}_{L^\infty(\Omega)}\bigg(\int_\Omega|\nabla c|^4dx\bigg)^{\frac{1}{2}}\bigg(\int_\Omega n^3dx\bigg)^{\frac12}.\non
\end{align}
Due to the Sobolev embedding theorem ($d=3$), we deduce that
\begin{align}
	\left(\int_\Omega n^3dx\right)^\frac13&=\|\sqrt{n}\|_{L^6(\Omega)}^2\non\\
     &\leq C(\|\nabla \sqrt{n}\|_{L^2(\Omega)}^2+\|\sqrt{n}\|_{L^2(\Omega)}^2)\non\\
     &\leq C(\|\nabla \sqrt{n}\|_{L^2(\Omega)}^2+\|n_0\|_{L^1(\Omega)})\non\\
     &\leq CM(T_{max}-t)^{-(1+\eta)\widetilde{C}\alpha_0}.\non
\end{align}
Therefore, it follows from \eqref{bu1}, \eqref{bu4} that
\begin{align}
	\frac{d}{dt}\int_\Omega n^2 dx
    &\leq\chi^2\|n\|^{\frac12}_{L^\infty(\Omega)}\bigg(\int_\Omega|\nabla c|^4dx\bigg)^{\frac{1}{2}}\bigg(\int_\Omega n^3dx\bigg)^{\frac12}\non\\
	&\leq CM^2\chi^2(T_{max}-t)^{-\frac12-2(1+\eta)\widetilde{C}\alpha_0}.\non
\end{align}
Thus, using the fact $-\frac12-2(1+\eta)\widetilde{C}\alpha_0>-1$ due to the choice of $\alpha_0$ and integrating the above inequality with respect to time yield that for any $t\in[\tau_2, T_{max})$, it holds
\begin{align}
	\|n(t)\|_{L^2(\Omega)}^2
    &\leq \|n(\tau_2)\|_{L^2(\Omega)}^2+CM^2\chi^2\int_{\tau_2}^t(T_{max}-s)^{-\frac12-2(1+\eta)\widetilde{C}\alpha_0}ds\non\\
    &= \|n(\tau_2)\|_{L^2(\Omega)}^2 +CM^2\chi^2\left(\frac12-2(1+\eta)\widetilde{C}\alpha_0\right)^{-1}(T_{max}-s)^{\frac12-2(1+\eta)\widetilde{C}\alpha_0}\bigg{|}^{\tau_2}_t\non\\
	&\leq \|n(\tau_2)\|_{L^2(\Omega)}^2 +CM^2\chi^2\left(\frac12-2(1+\eta)\widetilde{C}\alpha_0\right)^{-1}(T_{max}-\tau_2)^{\frac12-2(1+\eta)\widetilde{C}\alpha_0}.\non
\end{align}
As a consequence, we can conclude that $\|n\|_{L^\infty(0,T_{max};\, L^2(\Omega))}$ is bounded.
Recalling Corollary \ref{cor42} (taking $\e=\frac12$ in \eqref{subcri}), we have thus proved that the local classical solution $(n,c)$ can be extended beyond $T_{max}$, which leads to a contradiction with the fact that $T_{max}<+\infty$ is the maximal existing time.

Hence, the lower bound \eqref{lowblow} in Theorem \ref{mr2} must hold for a local classical solution that blows up in finite time.
This completes the proof. $\square$

\begin{remark}
Recalling Corollary \ref{smallini} and \cite{Tao}, we see that there exist a constant $\sigma>0$ possibly depending on $\Omega$ and $\chi$, if $\|c_0\|_{L^\infty(\Omega)}\leq \sigma$, then the local classical solution $(n,c)$ will not blow up in finite time, i.e., $T_{max}=+\infty$.
Thus, Theorem \ref{mr2} applies to the regime $\|c_0\|_{L^\infty(\Omega)}>\sigma$, where finite blow-up may happen.
In this case, we have
\begin{equation}
	\widetilde{C}> 3\big[2(2+\sqrt{3})\big]^{\frac{2}{3}}C_3^{\frac{1}{3}}\sigma^{\frac{4}{3}},\non
\end{equation}
and it follows from \eqref{kkk3}, \eqref{bula} that the admissible positive constant $\alpha$ in \eqref{lowblow} has an upper bound depending only on $\Omega$ and $\chi$
$$\alpha<\frac{1}{12} \big[2(2+\sqrt{3})\big]^{-\frac{2}{3}}C_3^{-\frac{1}{3}}\sigma^{-\frac{4}{3}}.$$
Moreover, we see that $\alpha\sim \|c_0\|_{L^\infty(\Omega)}^{-\frac43}$.
This implies that the larger $\|c_0\|_{L^\infty(\Omega)}$ is, the smaller $\alpha$ should be, and in this case, if
  $\limsup\limits_{t\nearrow T_{max}^-} (T_{max}-t) \|n(t)\|_{L^\infty(\Omega)} <\alpha$, then the local classical solution $(n,c)$ can be extended beyond $T_{max}$.
\end{remark}
%%%%%%%%%%%%%%%%%%%%%%%%%%%%%%%%%%%

\subsection{Proof of Theorem \ref{mr3}}
Theorem \ref{mr3} involves the blow-up rate for the local $L^\infty$-norm of $n$.
Its proof can be carried out following the argument in \cite[Section 3]{MizoSoup} with some necessary modifications.
Although here we only treat the case $d=3$, the conclusion of Theorem \ref{mr3} indeed does not relate to the spatial dimension \cite{MizoSoup}.

We sketch the proof as follows.

First, we quote the following lemma for the inhomogeneous linear heat equation (see \cite[Lemma 3.1]{MizoSoup}):
\begin{lemma}\label{lemheat}
Let $c$ be a classical solution of
\beq
 \begin{cases}\label{heat}
&c_t- \Delta c=- \lambda c+g, \quad (x,t)\in \Omega\times(0,T),\\
&\frac{\partial c}{\partial \nu}=0,\qquad\qquad \quad \ (x,t)\in \partial\Omega\times(0,T),\\
&c|_{t=0}=c_0(x),\qquad\quad  x\in \Omega.
\end{cases}
\eeq
with $g\in L_{loc}^\infty(0,T;L^\infty(\Omega))$, $\lambda\geq 0$. For any $\mu>\frac12$, there exists a constant $C^*=C^*(\Omega, \mu, \lambda)>0$ such that the following holds. Let $x^*\in \overline{\Omega}$. $t_0\in (0,T)$, $\rho, \varepsilon, \kappa>0$ and assume that
$$|g(x,t)|\leq \varepsilon(T-t)^{-\mu}, \quad \forall\,(x,t)\in \Omega_{x^*,\rho}\times(t_0,T),$$
$$\|c(t)\|_{L^1(\Omega_{x^*,\rho})}\leq \kappa, \quad \forall\, t\in (t_0,T).$$
Then for any $\tilde{\rho}\in (0,\rho)$, there exists $K>0$ such that the function $c(x,t)$ satisfies
\begin{equation}
|\nabla c(x,t)|\leq C^*\varepsilon(T-t)^{-\mu+\frac12}+K,\quad \forall\, (x,t)\in \Omega_{x^*,\tilde{\rho}}\times(t_0,T).\non
\end{equation}
\end{lemma}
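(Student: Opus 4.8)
The plan is to localize the equation around $x^*$ with a cut-off, represent the localized function by the Neumann heat semigroup, and estimate the resulting pieces separately; the whole point is to exploit that the terms carrying only $L^1$-information on $c$ are supported away from the point at which the gradient is evaluated.

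First I would fix radii $\tilde\rho<\rho_1<\rho_2<\rho$ and pick $\psi\in C^\infty(\overline\Omega)$ with $0\le\psi\le1$, $\psi\equiv1$ on $\Omega_{x^*,\rho_1}$, $\mathrm{supp}\,\psi\subset\Omega_{x^*,\rho_2}$, and crucially $\frac{\partial\psi}{\partial\nu}=0$ on $\partial\Omega$. Then $v:=\psi c$ satisfies homogeneous Neumann conditions and, using $\nabla\psi\cdot\nabla c=\nabla\cdot(c\nabla\psi)-c\Delta\psi$,
\beq
 v_t-\Delta v=-\lambda v+\psi g-2\nabla\cdot(c\nabla\psi)+c\Delta\psi,\qquad v|_{t=t_0}=\psi\,c(\cdot,t_0).\non
\eeq
Since $\nabla v=\nabla c$ on $\Omega_{x^*,\tilde\rho}$, it is enough to bound $\nabla v$, and I would do so through Duhamel's formula for the semigroup $\{e^{\tau(\Delta-\lambda)}\}$,
\begin{align}
\nabla v(t)&=\nabla e^{(t-t_0)(\Delta-\lambda)}v(t_0)
 +\int_{t_0}^{t}\nabla e^{(t-s)(\Delta-\lambda)}(\psi g)(s)\,ds\non\\
&\quad +\int_{t_0}^{t}\nabla e^{(t-s)(\Delta-\lambda)}\big[c\Delta\psi-2\nabla\cdot(c\nabla\psi)\big](s)\,ds.\non
\end{align}

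The first term is harmless: $v(t_0)$ is a fixed $C^1(\overline\Omega)$ function, so $\sup_{t\in(t_0,T)}\|\nabla e^{(t-t_0)(\Delta-\lambda)}v(t_0)\|_{L^\infty}$ is a finite constant $K_0$. For the second term I would use the smoothing bound $\|\nabla e^{\tau(\Delta-\lambda)}f\|_{L^\infty}\le C\tau^{-1/2}\|f\|_{L^\infty}$ together with $0\le\psi\le1$ and $|g|\le\varepsilon(T-s)^{-\mu}$ on $\mathrm{supp}\,\psi$ to get a bound by $C\varepsilon\int_{t_0}^t(t-s)^{-1/2}(T-s)^{-\mu}\,ds$. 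Substituting $u=t-s$ dominates this by $\int_0^\infty u^{-1/2}((T-t)+u)^{-\mu}\,du=B(\tfrac12,\mu-\tfrac12)(T-t)^{1/2-\mu}$, and here the hypothesis $\mu>\tfrac12$ is precisely what makes the Beta integral converge; this gives the leading term $C^*\varepsilon(T-t)^{-\mu+1/2}$ with $C^*=C^*(\Omega,\mu,\lambda)$, independent of the cut-off.

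The last term is the heart of the matter and the expected obstacle. Its source is controlled only by $\|c\|_{L^1(\Omega_{x^*,\rho})}\le\kappa$, and a direct smoothing estimate would cost two kernel derivatives, i.e. a non-integrable factor $(t-s)^{-(d+2)/2}$. The remedy is that $\nabla\psi,\Delta\psi$ live in the shell $\{\rho_1\le|x-x^*|\le\rho_2\}$, at distance at least $\eta:=\rho_1-\tilde\rho>0$ from $\Omega_{x^*,\tilde\rho}$. Writing the semigroup via the Neumann kernel $G(\tau,x,y)$ and integrating the divergence by parts in $y$, the boundary term carries $\nabla\psi\cdot\nu=\frac{\partial\psi}{\partial\nu}=0$ and vanishes; what remains are integrals of $\nabla_x\nabla_y G$ and $\nabla_x G$ against $c$. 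For $|x-y|\ge\eta$ the off-diagonal Gaussian bound $|\nabla_x^a\nabla_y^b G(\tau,x,y)|\le C\tau^{-(d+a+b)/2}e^{-\eta^2/(C\tau)}$ makes $\sup_{\tau>0}\tau^{-(d+2)/2}e^{-\eta^2/(C\tau)}=:C(\eta)$ finite, so the integrand is bounded by $C(\eta)\kappa$ uniformly in $s$, and integration over $(t_0,t)$ contributes a constant $K_1$. Taking $K=K_0+K_1$ then yields $|\nabla c|\le C^*\varepsilon(T-t)^{-\mu+1/2}+K$ on $\Omega_{x^*,\tilde\rho}\times(t_0,T)$. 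The genuinely delicate point is exactly this off-diagonal step: it is what upgrades the mere $L^1$-control of $c$ to a uniform gradient bound, and it depends both on the separation $\eta$ and on the choice $\partial\psi/\partial\nu=0$ that annihilates the boundary term.
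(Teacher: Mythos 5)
Your proposal is correct, and it is essentially the standard proof of this result: the paper itself does not prove the lemma (it is quoted verbatim from Mizoguchi--Souplet \cite[Lemma 3.1]{MizoSoup}), but your scheme --- cutoff localization compatible with the Neumann condition, Duhamel with the Neumann semigroup, the $\tau^{-1/2}$ gradient-smoothing estimate plus the Beta integral $\int_0^\infty u^{-1/2}((T-t)+u)^{-\mu}\,du$ for the $g$-term (which is exactly where $\mu>\tfrac12$ enters), and off-diagonal Gaussian kernel bounds played against the $L^1$-bound for the cutoff-commutator terms --- is precisely the argument of the cited reference, and the same decomposition reappears as the terms $J_0,\dots,J_3$ in the paper's own proof of Theorem \ref{mr3}. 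The one step you should spell out is the existence, when $x^*$ lies near or on $\partial\Omega$, of a cutoff $\psi$ with $\frac{\partial\psi}{\partial\nu}=0$ on $\partial\Omega$ (constructed, e.g., in tubular-neighborhood coordinates by making $\psi$ constant along normal fibers); this is standard, and it is what allows you to treat interior and boundary points uniformly, whereas the reference (and the paper, in the proof of Theorem \ref{mr3}) handles the case $x^*\in\partial\Omega$ by a separate argument.
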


Let $x^*\in \overline{\Omega}$, $t_0\in (0,T_{max})$, $\rho>0$. Besides, we assume that
\begin{equation}
	n(x,t)\leq \varepsilon (T_{max}-t)^{-1},\quad \forall\, (x,t)\in\Omega_{x^*,\rho}\times(t_0,T_{max}).\non
\end{equation}

We first consider the case $x^*\in\Omega$. Denote $\delta=\min\{\rho,\text{dist}(x^*,\partial\Omega)\}$.
We observe that the first equation of system \eqref{chemo1} for $n$ is the same as the one in the classical Keller--Segel model considered in \cite{MizoSoup} with $m=1$. Besides, in our present case, $\|c\|_{L^\infty(0, T_{max};\, L^\infty(\Omega))}$ is bounded.
As a sequence, letting $g=nc$ and $\lambda=0$ in \eqref{heat}, the conclusion of Lemma \ref{lemheat} holds true for our problem as well.
Then by the same argument as for \cite[(3.19)]{MizoSoup} with $m=1$, we can prove that for each $p>1$, there exists $C_{10}$, $K_1>0$ such that
\begin{equation}
  \label{lcnon1}
	\int_{B_{\frac{\delta}{2}}(x^*)} n^{p+1}(x,t) dx\leq K_1(T_{max}-t)^{-C_{10}\varepsilon^{2}}.
\end{equation}
Next, we claim that
\begin{equation}
	\label{lcnon2}
|\nabla c(x_0,t)|\leq K_2,\quad\forall\, x_0\in B_{\frac{\delta}{8}}(x^*)\;\text{and}\; t\in [t_0,T_{max}).
\end{equation}
To this end, taking a smooth cut-off function $\varphi\in C^2(\mathbb{R}^3)$, $0\leq \varphi\leq 1$ such that $\varphi(x)=1$ for $x\in B_{\frac{\delta}{4}}(x^*)$ and $\varphi(x)=0$ for $x\in\mathbb{R}^3\setminus B_{\frac{\delta}{2}}(x^*)$.
Put $\tilde{c}(x,t)=c(x,t)\varphi(x)$ for $\Omega\times(0,T_{max})$. Then the variable $\tilde{c}$ satisfies
\begin{equation}
	\tilde{c}_t=\Delta\tilde{c}-(nc\varphi+2\nabla c\cdot \nabla \varphi+c\Delta\varphi), \quad \mathrm{in} \ \Omega\times(0, T_{max}).\non
\end{equation}
 Let $G=G(x,y; t)$ and $(S(t))_{t\geq 0}$ denote the kernel and the semigroup associated with the Laplacian $\Delta$ with Neumann boundary conditions, respectively. Pick $x_0\in B_{\frac{\delta}{8}}(x^*)$ and $t\in[t_0, T_{max})$.
Then $\nabla \tilde{c}(x_0,t)$ can be represented as follows:
\begin{equation}
	\nabla \tilde{c}(x_0,t)=J_0(x_0,t)-J_1(x_0,t)-2J_2(x_0,t)-J_3(x_0,t),\non
\end{equation}
where
\begin{equation}
	J_0(x_0,t)=\nabla u(x_0,t)\quad\text{with}\quad u(\cdot, t)=S(t-t_0)\tilde{c}(t_0),\non
\end{equation}
and
\begin{align}
	J_1(x_0,t)&=\int_{t_0}^t\int_{\Omega}\nabla_x G(x_0,y;t-s)[n(y,s)c(y,s)\varphi(y)]dyds,\non\\
	J_2(x_0,t)&=\int_{t_0}^t\int_{\Omega}\nabla_x G(x_0,y;t-s)[\nabla c(y,s)\cdot\nabla \varphi(y)]dyds,\non\\
	J_3(x_0,t)&=\int_{t_0}^t\int_{\Omega}\nabla_x G(x_0,y;t-s)[c(y,s)\Delta \varphi(y)]dyds.\non
\end{align}
The estimates for $J_0(x_0,t)$, $J_2(x_0,t)$ and $J_3(x_0,t)$ can be obtained in the same way as in the proof of \cite[Lemma 3.1]{MizoSoup}.
It remains to control the term $J_1(x_0,t)$. We note that by \eqref{lcnon1} and in particular, the boundedness of $\|c\|_{L^\infty(0,T_{max};\ L^\infty(\Omega))}$, it holds
\begin{align}
	|J_1(x_0,t)|&\leq C\int_{t_0}^t(t-s)^{-2}\int_\Omega n(y,s)c(y,s)\varphi(y)\exp\bigg(-\frac{C|x-y|^2}{4(t-s)}\bigg)dyds\nonumber\\
	&\leq C\int_{t_0}^t(t-s)^{\frac{3}{2q}-2}
          \times\bigg[(t-s)^{-\frac{3}{2}}\int_\Omega\exp\bigg(-\frac{Cq|x-y|^2}{4(t-s)}\bigg)dy\bigg]^{\frac{1}{q}}\nonumber\\
	&\qquad\quad  \times\bigg(\int_\Omega [n(y,s)c(y,s)\varphi(y)]^p dy\bigg)^{\frac{1}{p}}ds\nonumber\\
	&\leq C\|c\|_{L^\infty(0,T_{max};\, L^\infty(\Omega))} \int_{t_0}^t(t-s)^{\frac{3}{2q}-2}\bigg[K_1(T_{max}-t)^{-C_{10}\varepsilon^2}\bigg]^{\frac{1}{p}}ds\nonumber\\
	&\leq C,\non
\end{align}
for $x_0\in B_{\frac{\delta}{8}}(x^*)$ and $t\in[t_0, T_{max})$, provided that $\varepsilon$, $p$ and $q$ satisfy
\begin{equation}
	1-\frac{3}{2q}+\frac{C_{10}\varepsilon^2}{p}<0, \quad \frac{1}{p}+\frac{1}{q}=1, \quad 1\leq q<\frac32.\non
\end{equation}
Thus, the conclusion \eqref{lcnon2} follows. Then due to Step 3 in the proof of \cite[Theorem 1.1]{MizoSoup}, we can further prove
\begin{equation}
	n(x_0,t)\leq K_3, \quad\text{for}\ x_0\in B_{\frac{\delta}{32}}(x^*) \;\mathrm{and} \;t\in[t_0, T_{max}),\non
\end{equation}
which immediately yields that $x^*$ is not a blow-up point.

At last, the case of $x^*\in\partial\Omega$ can be treated in the same way as in \cite{MizoSoup}, with corresponding modifications in the proof like those indicated above.

The proof of Theorem \ref{mr3} is complete. $\square$

%%%%%%%%%%%%%%%%%%%%%%%%%%%%%%%%%%%%%%%%%%%%%%%%%%%%%%
\section*{Acknowledgments}
J. Jiang is partially supported by National Natural Science Foundation of China (NNSFC) under the grant No. 11201468. H. Wu is partially supported by NNSFC under the grant No. 11371098, 11631011 and Shanghai Center for Mathematical Sciences at Fudan University. S. Zheng is partially supported by NNSFC under the grant No. 11131005.

%%%%%%%%%%%%%%%%%%%%%%%%%%%%%%%%%%%%%%%%%%%%%%%%%%%%%%%%%%%

\end{document}